\setlist[enumerate]{leftmargin=.5in}
\setlist[itemize]{leftmargin=.5in}
\newtheorem{theorem}{Theorem}[section]
\newtheorem{lemma}[theorem]{Lemma}
\newtheorem{corollary}[theorem]{Corollary}
\newtheorem{proposition}[theorem]{Proposition}
\theoremstyle{definition}
\newtheorem{definition}[theorem]{Definition}
\newtheorem{example}[theorem]{Example}
\theoremstyle{remark}
\newtheorem{remark}[theorem]{Remark}
\newcommand{\rank}{\textup{rank}\,}
\newcommand{\rankp}{\textup{rank}_\theta \,}
\newcommand{\ranks}{\textup{rank}_{\pm} \,}
\newcommand{\rankpsdr}{\textup{rank}_{\text{psd}}\,}
\newcommand{\rankpsd}{\textup{rank}^{\mathbb{C}}_{\text{psd}}\,}
\newcommand{\sqrtrank}{\textup{rank}_{\! \! {\sqrt{\ }}}\,}
\renewcommand{\S}{\mathcal S}
\newcommand{\CC}{\mathbb C}
\newcommand{\RR}{\mathbb R}
\newcommand{\Aalg}{\mathcal{A}_{\textup{alg}}}
\newcommand{\A}{\mathcal{A}}
\title{The phaseless rank of a matrix}
\author{Ant\'onio Pedro Goucha \and Jo\~{a}o Gouveia  \thanks {
The first author's research was supported through a PhD scholarship from FCT, grant PD/BD/135276/2017. This work was supported by the Centre for Mathematics of the University of Coimbra--UID/MAT/00324/2019, funded by the Portuguese Government through FCT/MEC and co-funded by the European Regional Development Fund through the Partnership Agreement PT2020.}}
\date{}
\begin{document}

\maketitle

\begin{abstract}
We consider the problem of finding the smallest rank of a complex matrix whose absolute values of the entries are given. We call this minimum the phaseless rank of the matrix of the entrywise absolute values. In this paper we study this quantity, extending a classic result of Camion and Hoffman and connecting it to the study of amoebas of determinantal varieties and of semidefinite representations of convex sets. As a consequence, we prove that the set of maximal minors of a matrix of indeterminates form an amoeba basis for the ideal they define, and we attain a new upper bound on the complex semidefinite extension complexity of polytopes, dependent only on their number of vertices and facets. We also highlight the connections between the notion of phaseless rank and the problem of finding large sets of complex equiangular lines or mutually unbiased bases.
\end{abstract}

\section{Introduction}
In this paper we study a basic optimization problem: given the absolute values of the entries of a complex matrix, what is the smallest rank that it can have.
In other words, we want the solution to the rank minimization problem for a matrix under complete phase uncertainty. This defines a natural quantity that we will associate to
the matrix of absolute values and  call the \emph{phaseless rank} of the matrix.

\begin{definition}
Given $A \in \mathbb{R}^{n\times m}_+$, the set of matrices equimodular with $A$ is denoted by $$\Omega(A) = \{B \in \mathbb{C}^{n\times m}: |B|=A \text{ i.e., } |B_{ij}|=A_{ij}, \forall i,j \}$$
and its phaseless rank is defined as
$$\rankp(A) = \min \{\rank(B): B \in \Omega(A) \}.$$
\end{definition}

Equivalently, the phaseless rank of $A \in \mathbb{R}^{n\times m}_+$ can be written as $$\rankp(A) = \min \{\rank(A \circ B): B \in \mathbb{C}^{n\times m},\ |B_{ij}|=1, \forall i,j \},$$ where $\circ$ represents the Hadamard product of matrices. It is obvious that $\rankp(A) \leq \rank(A)$, and it is not hard to see that we can have a strict inequality.

\begin{example} \label{ex:gapphaseless}
Consider the $4\times 4$ derangement matrix,
$$D_4=\begin{bmatrix}
0 & 1 & 1 & 1\\
1 & 0 & 1 & 1\\
1 & 1 & 0 & 1\\
1 & 1 & 1 & 0
\end{bmatrix}.$$
We have $\rank(D_4)=4$ and, for any real $\theta$, the matrix
$$\begin{bmatrix}
0 & 1 & 1 & 1\\
1 & 0 & e^{i(\theta + \pi)} & e^{i(\theta + \frac{2\pi}{3})}\\
1 & e^{i\theta} & 0 & e^{i(\theta + \frac{\pi}{3})}\\
1 & e^{i(\theta - \frac{\pi}{3})} & e^{i(\theta - \frac{2\pi}{3})} & 0
\end{bmatrix}$$
has rank $2$. Since this matrix has as entrywise absolute values the entries of $D_4$, $\rankp(D_4) \leq 2$, and in fact we have equality.
With some extra effort one can show that up to row and column multiplication by complex scalars of absolute value one, and conjugation, this is the
only element in the equimodular class of $D_4$ with rank less or equal than two.
\end{example}

The study of this quantity can be traced back to \cite{camion1966nonsingularity}, where the problem of characterizing $A \in \mathbb{R}^{n\times n}_+$ for which we have $\rankp(A)=n$ is solved. In that paper, the question is seen as finding a converse for the diagonal dominance, a sufficient condition for nonsingularity of a matrix. This result was further generalized in \cite{levinger1972generalization}, where a lower bound is derived for $\rankp(A)$ for general $A$, and some special cases are studied, although the rank itself is never formally introduced. While the result of Camion and Hoffman is well known, there was little, if any, further developments in minimizing the rank over an equimodular class. This problem has, however, resurfaced in recent years under different guises in both the theory of semidefinite lifts of polytopes and amoebas of algebraic varieties. In this work we build on the work of these foundational papers, deriving some new results and highlighting the consequences they have in those related areas.

The paper is organized as follows. In the next section we introduce formally the notions of phaseless and signless ranks and show some relations between them and other rank notions found in the literature. In Section 3, we relate the notion of phaseless rank with questions in amoeba theory and semidefinite representability of sets, providing motivation and intuition to what follows. In Section 4 we revisit a result of Camion and Hoffman, reproving it in a language well-suited to our needs, and drawing some simple consequences. Section 5 covers our extensions and complements to this classic result. Finally, in Section 6, we draw implications from those results to those of the connecting areas. Those include proving that the maximal minors form an amoeba basis for the variety they generate and giving an explicit semialgebraic description for those amoebas, as well as deriving a new upper bound for the complex semidefinite rank of polytopes in terms of their number of facets and vertices, and connecting the notion of phaseless rank to the problem of finding large sets of complex equiangular lines or mutually unbiased bases.

\section{Notation, definitions and basic properties} \label{sec:definitions}

Throughout these notes we will use $\mathbb{R}^{n\times m}_+$ and $\mathbb{R}^{n\times m}_{++}$ to denote the sets of $n\times m$ real matrices with nonnegative and positive entries, respectively. We will also use $\S^n$, $\S_+^n$, $\S^n(\CC)$ and $\S^n_+(\CC)$ to denote, in this order, the sets of $n \times n$ real symmetric matrices, $n \times n$ real positive semidefinite matrices, $n \times n$ complex hermitian matrices and $n \times n$ complex positive semidefinite matrices. Given a matrix in $\mathbb{R}^{n\times m}_+$, we defined its phaseless rank as the smallest rank of a complex matrix equimodular with it. If we restrict ourselves to the real case, we still obtain
a sensible definition, and we will denote that quantity by \emph{signless rank}.

\begin{definition}
Let $A\in \mathbb{R}^{n\times m}_+$.
$$\ranks(A) = \min \{\rank(B): B \in \Omega(A) \cap \mathbb{R}^{n\times m} \}.$$
\end{definition}

Equivalently, this amounts to minimizing the rank over all possible sign attributions to the entries of $A$.
By construction, it is clear that $\rankp(A) \leq \ranks(A) \leq \rank(A)$ for any nonnegative matrix $A$ and all inequalities can be strict.

\begin{example}\label{ex:signlessgap}
Let us revisit Example \ref{ex:gapphaseless}, and note that the signless rank of $D_4$ is $4$. Indeed, if we expand the determinant of that matrix, we get an odd number of nonzero terms, all $1$ or $-1$, so no possible sign attribution can ever make it sum to zero. Thus, $\rankp(D_4)<\ranks(D_4)=\rank(D_4)$. On the other hand, if we consider matrix
$$B=\begin{bmatrix}
2 & 1 & 1\\ 1 & 2 & 1 \\ 1 & 1 & 2
\end{bmatrix}$$
it is easy to see that $\rank(B)=3$ but that flipping the signs of all the $1$'s to $-1$'s drops the rank to $2$, as the matrix rows will then sum to zero, so we have $\rankp(B)=\ranks(B)<\rank(B)$. If we want all inequalities to be strict simultaneously, it is enough to make a new matrix with $D_4$ and $B$ as its diagonal blocks.
\end{example}

A short remark at the end of \cite{camion1966nonsingularity} points to the fact that the problem seems much harder over the reals, due to the combinatorial nature it assumes in that context. In fact, the signless rank is essentially equivalent to a different quantity, introduced in \cite{gouveia2013polytopes}, denoted by the \emph{square root rank} of a nonnegative matrix. In fact, by definition, $\ranks(A)=\sqrtrank(A \circ A)$ or, equivalently, $\sqrtrank(A)=\ranks(\sqrt[\circ]{A})$, where $\circ$ is the Hadamard product and $\sqrt[\circ]{A}$ is the Hadamard square root of $A$. As such, the complexity results proved in \cite{fawzi2015positive} for the square root rank still apply to the signless rank, implying the NP-hardness of the decision problem of checking if an $n \times n$ nonnegative matrix has signless rank equal to $n$. The proof of that complexity result relies on the combinatorial nature of the signless rank and fails in the more continuous notion of phaseless rank (in fact we will see the analogous result to be false for the phaseless rank), offering some hope that this later quantity will prove to be easier to work with. We will focus most of our attention in this latter notion.

The connection to the square root rank can actually be used to derive some lower bounds for both $\ranks$ and $\rankp$.

\begin{lemma} \label{lem:inequality}
Let $A \in  \mathbb{R}^{n\times m}_+$ and $r=\rank(A \circ A)$. Then, $\ranks(A) \geq \frac{\sqrt{1+8r}-1}{2}$ and $\rankp(A) \geq \sqrt{r}$.
\end{lemma}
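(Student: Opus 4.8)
The plan is to exploit a single algebraic identity relating a matrix in the equimodular class of $A$ to $A \circ A$: if $B \in \Omega(A)$, then $B \circ \bar B = A \circ A$ entrywise, since $(B\circ\bar B)_{ij} = B_{ij}\overline{B_{ij}} = |B_{ij}|^2 = A_{ij}^2$; and if moreover $B$ is real, this reads $B\circ B = A\circ A$. So $r = \rank(A\circ A)$ is the rank of the Hadamard product of $B$ with (a conjugate of) itself, and everything reduces to bounding the rank of such a Hadamard product in terms of $\rank(B)$.

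For the phaseless bound I would first recall the elementary submultiplicativity of rank under Hadamard products: writing $M = \sum_{a=1}^{k} x_a y_a^\top$ and $N = \sum_{b=1}^{l} w_b z_b^\top$ as sums of rank-one terms, one has $M \circ N = \sum_{a,b}(x_a\circ w_b)(y_a\circ z_b)^\top$, a sum of $kl$ rank-one terms, hence $\rank(M\circ N)\le \rank(M)\rank(N)$. Applying this with $N = \bar B$ and using $\rank(\bar B) = \rank(B)$ gives $r = \rank(B\circ\bar B) \le \rank(B)^2$ for every $B\in\Omega(A)$; minimizing over $B$ yields $r \le (\rankp A)^2$, i.e.\ $\rankp(A) \ge \sqrt r$.

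For the signless bound the same argument loses a factor, so I would instead use the sharper estimate available when we Hadamard-square a single \emph{real} matrix. If $B$ is real of rank $k$, factor $B = UV^\top$ with $U \in \mathbb{R}^{n\times k}$ and $V \in \mathbb{R}^{m\times k}$; then $(B\circ B)_{ij} = \big(\sum_a U_{ia}V_{ja}\big)^2 = \sum_{a\le b} c_{ab}\,(U_{ia}U_{ib})(V_{ja}V_{jb})$ with $c_{aa}=1$ and $c_{ab}=2$ for $a<b$, which exhibits $B\circ B$ as a product of an $n\times\binom{k+1}{2}$ matrix with a $\binom{k+1}{2}\times m$ matrix, the intermediate coordinates being indexed by unordered pairs $\{a,b\}$. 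Hence $\rank(B\circ B)\le \binom{k+1}{2} = \tfrac{k(k+1)}{2}$. Taking $B\in\Omega(A)\cap\mathbb{R}^{n\times m}$ realizing the signless rank and setting $k=\ranks(A)$, we get $r \le \tfrac{k(k+1)}{2}$; solving $k^2+k-2r\ge 0$ for $k>0$ gives $\ranks(A)\ge \frac{\sqrt{1+8r}-1}{2}$.

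I expect no serious obstacle: the only point needing a little care is the symmetric-tensor bookkeeping in the signless case — checking that the reindexing by unordered pairs $\{a,b\}$ genuinely yields a factorization of $B\circ B$ through $\mathbb{R}^{\binom{k+1}{2}}$ rather than merely through $\mathbb{R}^{k^2}$ — but this is precisely the classical fact that the symmetric square of a rank-$k$ matrix has rank at most $\binom{k+1}{2}$, and it is exactly what makes the signless estimate strictly stronger than the one obtained from brute-force submultiplicativity.
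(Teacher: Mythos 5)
Your proof is correct and follows essentially the same route as the paper: the identity $B\circ\bar B=A\circ A$ together with a factorization of this Hadamard product through $\rank(B)^2$ (resp.\ $\binom{\rank(B)+1}{2}$) rank-one terms is exactly the paper's dimension count for the hermitian (resp.\ real symmetric) matrices obtained as outer products of the rows of the factors of $B$. The symmetric-square bookkeeping you spell out is just the coordinate form of that argument, so nothing is missing.
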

\begin{proof}
The basic idea is that if we take a matrix $B$ equimodular with $A$ and a minimal factorization $B=UV^t$, and let $u_i$ and $v_j$ be the $i$-th and $j$-th rows of $U$ and $V$, respectively, we have
$$\langle u_i u_i^* , v_j v_j^* \rangle = |\langle u_i, v_j \rangle|^2 = |b_{ij}|^2 = a_{ij}^2 .$$
Now all the $u_i u_i^*$ and $v_j v_j^*$ come from the space of real symmetric matrices of size $\ranks(A)$, if we are taking real matrices $B$, and complex hermitian matrices of size $\rankp(A)$, if we are taking complex matrices $B$.
Since the real dimensions of these spaces are, respectively, $\binom{\ranks(A)+1}{2}$ and $\rankp(A)^2$, and they give real factorizations of $A \circ A$, we get the inequalities
$$\rank(A \circ A) \leq \binom{\ranks(A)+1}{2} \textrm{ \ \ \  and \ \ \ } \rank(A \circ A) \leq \rankp(A)^2,$$
which, when inverted, give us the intended inequalities.
\end{proof}

This result is known in the context of semidefinite rank, and is included here only for the purpose of a unified treatment. An additional very simple property that is worth noting is that a nonnegative matrix has rank one if and only if it has signless rank one, if and only if it has phaseless rank one. This simple fact immediately tells us that the matrices $D_4$ and $B$ in Example \ref{ex:signlessgap} have phaseless rank $2$, since we have proved it is at most $2$ and those matrices have rank greater than one.

Besides the problem of computing or bounding the phaseless rank, we will be interested in the geometry of the set of rank constrained matrices. In order to refer to them we will introduce some notation.

\begin{definition}
Given positive integers $k,n$ and $m$ we define the following subsets of  $\mathbb{R}^{n\times m}_+$:
$$P^{n\times m}_{k} = \{ A\in \mathbb{R}^{n\times m}_+: \rankp(A)\leq k \},$$
$$S^{n\times m}_{k} = \{ A\in \mathbb{R}^{n\times m}_+: \ranks(A)\leq k \},$$
and
$$R^{n\times m}_{k} = \{ A\in \mathbb{R}^{n\times m}_+: \rank(A)\leq k \}.$$
\end{definition}

It is easy to see that these are all semialgebraic sets. Moreover, the set $R^{n\times m}_{k}$ is well understood, since it is simply the variety of matrices of rank at most $k$, defined by the $k+1$-minors, intersected with the nonnegative orthant. It is also not too hard to get a grasp on the set $S^{n\times m}_{k}$, as this is the union of the variety of matrices of rank at most $k$ with all its $2^{n \times m}$ possible reflections attained by flipping the signs of a subset of variables, intersected with the nonnegative orthant. In particular, we have a somewhat simple algebraic description of both these sets, and they have the same dimension, $k(m+n-k)$.

For $P^{n\times m}_{k}$, all these questions are much more difficult. Clearly we have $R^{n\times m}_{k} \subseteq S^{n\times m}_{k} \subseteq P^{n\times m}_{k}$, which gives us some lower bound on the dimension of the space, but not much else can be immediately derived.

The relations between all these sets are illustrated in Figure \ref{fig:sets}, where we can see a random $2$-dimensional slice of the cone of nonnegative $3\times 3$ matrices (in pink) with the corresponding slice of the region of phaseless rank at most $2$, highlighted in yellow, while the slices of the algebraic closures of the regions of signless rank at most $2$ and usual rank at most $2$ are marked in dashed and solid lines, respectively. Note that Figure \ref{fig:sets} suggests $P^{3\times 3}_{2}$ is full-dimensional. In fact, $P^{n\times n}_{k}$ is full-dimensional in $\mathbb{R}^{n\times n}_+$ for any $k \geq  \frac{n+1}{2} $. This observation follows from Corollary \ref{cor:dimension}.

\begin{figure}[H]
  \centering
    \centerline{\includegraphics[width=0.5\textwidth]{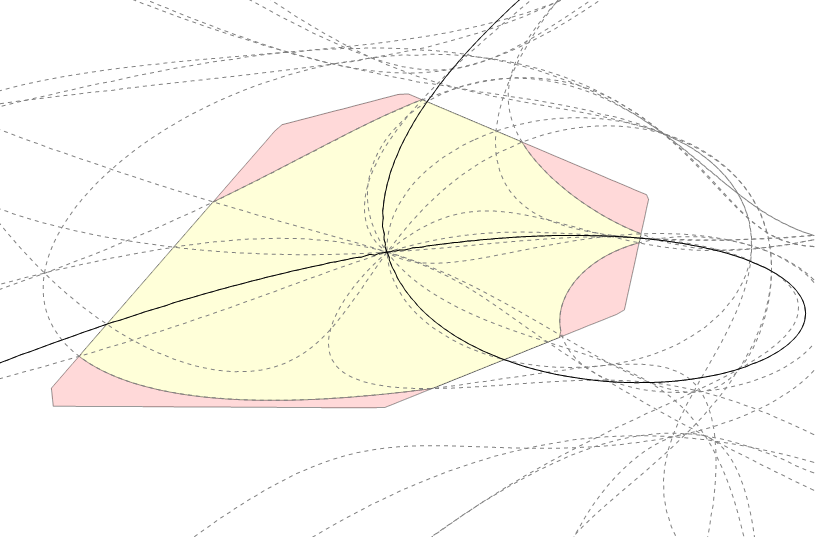}}
	\caption{Slice of the cone of nonnegative $3\times 3$ matrices with $P^{3\times 3}_{2}$, $S^{3\times 3}_{2}$ and $R^{3\times 3}_{2}$ highlighted}
	\label{fig:sets}
\end{figure}

\section{Motivation and connections}

As mentioned in the introduction, the concept of phaseless rank is intimately connected to the concept of semidefinite rank of a matrix, used, for instance, to study semidefinite representations of polytopes and amoebas of algebraic varieties. In this section we will briefly introduce each of those areas and establish the connections, as those were the motivating reasons for our study of the subject.

\subsection{Semidefinite extension complexity of a polytope}

The semidefinite rank of a matrix was introduced in \cite{gouveia2013lifts} to study the semidefinite extension complexity of a polytope. Recall that given a $d$-polytope $P$, its \emph{semidefinite extension complexity}
its the smallest $k$ for which one can find $A_0, A_1, \dots, A_m \in \S^k$ such that
$$P=\left\{ (x_1,\dots,x_d) \in \RR^d: \exists \, x_{d+1}, \dots, x_m \in \RR \textrm{ s.t. }A_0 + \sum_{i=1}^m x_i A_i \succeq 0 \right\}.$$
In other words, it is the smallest $k$ for which one can write $P$ as the projection of a slice of the cone of $k\times k$ real positive semidefinite matrices. In order to study this concept one has to introduce the notion of \emph{slack matrix} of a polytope. If $P$ is a polytope with vertices $p_1$,..., $p_v$ and facets cut out by the inequalities $\langle a_1, x\rangle \leq b_1$, ..., $\langle a_f, x\rangle \leq b_f$, then we define its slack matrix to be the nonnegative $v \times f$ matrix $S_P$ with entry $(i,j)$ given by $b_j-\langle a_j, p_i\rangle$.

Additionally, the \emph{semidefinite rank} of a nonnegative matrix $A \in \RR^{n \times m}_+$, $\rankpsdr(A)$, is the smallest $k$ for which one can find $U_1 \dots, U_n, V_1, \dots, V_m \in \S_+^k$ such that $A_{ij}=\langle U_i, V_j \rangle$. By the main result in  \cite{gouveia2013lifts} one can characterize the semidefinite extension complexity of a $d$-polytope $P$ in terms of the semidefinite rank of its slack matrix.

\begin{proposition}
The extension complexity of a polytope $P$ is the same as the semidefinite rank of its slack matrix, $\rankpsdr(S_P)$.
\end{proposition}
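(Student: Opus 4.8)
The plan is to prove the two inclusions separately, establishing that the semidefinite extension complexity $\operatorname{xc}_{\mathrm{sdp}}(P)$ equals $\rankpsdr(S_P)$ by showing each is at most the other. Throughout, the key object is a factorization of the slack matrix: by definition $\rankpsdr(S_P) \leq k$ exactly when there exist $U_1,\dots,U_v, V_1,\dots,V_f \in \S^k_+$ with $(S_P)_{ij} = \langle U_i, V_j\rangle$. The strategy is the standard Yannakakis-type argument adapted to the semidefinite cone, so I would first recall Yannakakis' theorem in the linear programming setting for intuition, then mirror its two directions.

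For the direction $\operatorname{xc}_{\mathrm{sdp}}(P) \leq \rankpsdr(S_P)$, I would start from a rank-$k$ psd factorization $(S_P)_{ij} = \langle U_i, V_j\rangle$. The idea is to build an explicit semidefinite lift. Write $P = \{x : \langle a_j, x\rangle \leq b_j,\ j=1,\dots,f\}$ and consider the spectrahedral set $Q$ in the space of $x$ together with an auxiliary psd matrix variable $Y \in \S^k$, defined by the affine constraints $\langle a_j, x\rangle + \langle V_j, Y\rangle = b_j$ for each $j$, plus $Y \succeq 0$. One then checks: (i) for each vertex $p_i$, taking $Y = U_i$ shows $(p_i, U_i) \in Q$, so the projection of $Q$ onto $x$-space contains all vertices, hence contains $P$; (ii) conversely, if $(x, Y) \in Q$ with $Y \succeq 0$, then for every $j$ we get $b_j - \langle a_j, x\rangle = \langle V_j, Y \rangle \geq 0$ since $V_j, Y \in \S^k_+$, so $x \in P$. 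Thus the projection of $Q$ equals $P$, and $Q$ lives in the cone of $k\times k$ psd matrices (after absorbing the free variable $Y$), giving a size-$k$ semidefinite representation. A small technical point is bookkeeping the ambient dimensions and the affine versus homogeneous formulation, but this is routine.

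For the reverse direction $\rankpsdr(S_P) \leq \operatorname{xc}_{\mathrm{sdp}}(P)$, suppose $P = \pi\{z : A_0 + \sum_i z_i A_i \succeq 0\}$ with $A_i \in \S^k$, where $\pi$ is the coordinate projection. For each vertex $p_i$ of $P$, pick a preimage $z^{(i)}$ and set $U_i := A_0 + \sum_\ell z^{(i)}_\ell A_\ell \succeq 0$. For each facet inequality $\langle a_j, x\rangle \leq b_j$, I need to produce $V_j \succeq 0$ such that $\langle U_i, V_j\rangle = b_j - \langle a_j, p_i\rangle$ for all $i$. The mechanism is conic duality: the linear functional $z \mapsto b_j - \langle a_j, \pi(z)\rangle$ is nonnegative on the spectrahedron $\{z : A_0 + \sum z_\ell A_\ell \succeq 0\}$ (because the projection of that spectrahedron is exactly $P$), so by a Positivstellensatz / conic-duality argument for the psd cone it can be written, modulo the affine relations, as $L \mapsto \langle V_j, L\rangle$ for some $V_j \succeq 0$ evaluated at $L = A_0 + \sum z_\ell A_\ell$. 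Evaluating at $z = z^{(i)}$ yields the desired identity. Combining, $(S_P)_{ij} = \langle U_i, V_j\rangle$ with all factors in $\S^k_+$, so $\rankpsdr(S_P) \leq k$.

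The main obstacle is the duality step in the second direction: asserting that a linear functional nonnegative on a spectrahedron is a nonnegative combination (in the appropriate sense) of the defining psd constraint requires care, because the naive semidefinite duality can fail to be exact without a regularity (strict feasibility / Slater) hypothesis, and spectrahedra arising as lifts need not be full-dimensional or strictly feasible. The resolution — which is exactly what the argument in \cite{gouveia2013lifts} handles — is to work on the affine hull of the spectrahedron, or equivalently to note that since we only need the identity to hold at the finitely many points $U_i$ lying in the psd cone, one can restrict to the face of $\S^k_+$ they generate and apply duality there, where strict feasibility is automatic. Everything else is linear algebra and unwinding definitions, so I would state the regularity reduction carefully and cite the original result for the delicate part.
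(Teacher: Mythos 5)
Your outline is exactly the standard Yannakakis-type factorization argument for the psd cone, which is the content of the result the paper invokes here without proof (the main theorem of \cite{gouveia2013lifts}): both directions are set up correctly, and you rightly isolate the only delicate point---exactness of conic duality without strict feasibility---together with its standard fix, namely that the relevant face of $\S_+^k$ is itself a smaller psd cone on which Slater's condition holds, so a non-proper lift of size $k$ still yields a factorization of size at most $k$. The one step worth making explicit in your duality argument is that the constant term in $b_j-\langle a_j,\pi(z)\rangle=\langle V_j,\,A_0+\sum_\ell z_\ell A_\ell\rangle+c_j$ equals the optimal value of the associated SDP, which is zero because the $j$-th facet inequality is tight somewhere on $P$; this is what guarantees you recover $S_P$ itself rather than $S_P$ shifted by a constant in each column.
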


For a thorough treatment of the positive semidefinite rank, see \cite{fawzi2015positive}. As noted in \cite{goucha2017ranks,lee2017some}, one can replace real positive semidefinite matrices with complex positive semidefinite matrices and everything still follows through. More precisely, if one defines the \emph{complex semidefinite extension complexity} of $P$ as the smallest $k$ for which  one can find $B_0, B_1, \dots, B_m \in \S^k(\CC)$ such that
$$P=\left\{ (x_1,\dots,x_d) \in \RR^d: \exists \, x_{d+1}, \dots, x_m \in \RR \textrm{ s.t. } B_0 + \sum_{i=1}^m x_i B_i \succeq 0 \right\},$$
and the \emph{complex semidefinite rank} of a matrix $A \in \RR^{n \times m}_+$, $\rankpsd(A)$, as the smallest $k$ for which one can find $U_1 \dots, U_n, V_1, \dots, V_m \in \S_+^k(\CC)$ such that $A_{ij}=\langle U_i, V_j \rangle$, the analogous of the previous proposition still holds.

\begin{proposition}
The complex extension complexity of a polytope $P$ is the same as the complex semidefinite rank of its slack matrix, $\rankpsd(S_P)$.
\end{proposition}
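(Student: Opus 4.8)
The plan is to mirror, essentially verbatim, the proof of the real version stated above (from \cite{gouveia2013lifts}), replacing the cone $\S^k_+$ of real positive semidefinite matrices by the cone $\S^k_+(\CC)$ of complex Hermitian positive semidefinite matrices, regarded as a closed convex cone inside the real vector space $\S^k(\CC)$, which has real dimension $k^2$. Two elementary observations make this substitution legitimate and should be recorded up front: for Hermitian $X,Y$ one has $\langle X,Y\rangle=\Tr(XY)\in\RR$, and $\langle X,Y\rangle\ge 0$ whenever $X,Y\succeq 0$; and $\S^k_+(\CC)$ is self-dual with respect to this real trace inner product. Everything below is a real-linear statement about a finite-dimensional real vector space, so the real arguments carry over with only cosmetic changes.

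First I would prove that $\rankpsd(S_P)\le k$ forces the complex extension complexity of $P$ to be at most $k$. Write $P=\{x\in\RR^d:\langle a_j,x\rangle\le b_j,\ j=1,\dots,f\}$ with vertices $p_1,\dots,p_v$, and take a complex psd factorization $(S_P)_{ij}=\langle U_i,V_j\rangle$ with $U_i,V_j\in\S^k_+(\CC)$. Consider
$$Q=\bigl\{(x,Y)\in\RR^d\times\S^k(\CC):Y\succeq 0,\ \langle V_j,Y\rangle=b_j-\langle a_j,x\rangle\ \text{for }j=1,\dots,f\bigr\}.$$
If $(x,Y)\in Q$ then $b_j-\langle a_j,x\rangle=\langle V_j,Y\rangle\ge 0$ for all $j$, so the image of $Q$ under $(x,Y)\mapsto x$ is contained in $P$; conversely, for $x=\sum_i\lambda_i p_i$ with $\lambda\ge 0$, $\sum_i\lambda_i=1$, the matrix $Y=\sum_i\lambda_i U_i\succeq 0$ satisfies $\langle V_j,Y\rangle=\sum_i\lambda_i(S_P)_{ij}=b_j-\langle a_j,x\rangle$, so $x$ lies in that image. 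Hence $P$ is the coordinate projection of the affine slice $Q$ of $\S^k_+(\CC)$; parametrizing the solution set of the linear equations defining $Q$ and substituting puts this in the required form $B_0+\sum_{i=1}^m x_i B_i\succeq 0$ with all $B_i\in\S^k(\CC)$.

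For the reverse inequality I would start from a complex semidefinite lift of size $k$, written after the usual reductions as $P=\pi(\mathcal{K})$ with $\pi$ a linear projection and $\mathcal{K}=\mathcal{L}\cap\S^k_+(\CC)$ for an affine subspace $\mathcal{L}$ of $\S^k(\CC)$. For each vertex $p_i$ choose $U_i\in\mathcal{K}$ with $\pi(U_i)=p_i$; these are Hermitian psd. For each facet inequality $\langle a_j,x\rangle\le b_j$, the affine function $M\mapsto b_j-\langle a_j,\pi(M)\rangle$ is nonnegative on $\mathcal{K}$, and by conic duality for the self-dual cone $\S^k_+(\CC)$ it agrees on $\mathcal{L}$ with $M\mapsto\langle V_j,M\rangle$ for some $V_j\in\S^k_+(\CC)$. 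Then $\langle U_i,V_j\rangle=b_j-\langle a_j,p_i\rangle=(S_P)_{ij}$, so $(U_i)_i,(V_j)_j$ is a complex psd factorization of $S_P$ of size $k$ and $\rankpsd(S_P)\le k$. Combining the two inequalities yields equality.

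The one genuinely delicate step, exactly as in the real case, is the conic-duality step in the last paragraph: I must ensure that the nonnegative affine function on the spectrahedron $\mathcal{K}$ is represented by an element of the dual cone on all of $\mathcal{L}$, which requires care about non-closedness or non-properness of the relevant cones (for instance by first passing to the face of $\S^k_+(\CC)$ generated by $\mathcal{K}$, or by a limiting argument). Since $\S^k_+(\CC)$ is self-dual and the whole discussion takes place in a finite-dimensional real vector space, these reductions are word for word the ones used in the real proof, and I would simply transcribe them.
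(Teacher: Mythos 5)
Your proof is correct and is essentially the argument the paper has in mind: the paper offers no independent proof, simply noting (with citations to \cite{goucha2017ranks,lee2017some}) that the Yannakakis-type factorization argument of \cite{gouveia2013lifts} carries over verbatim when $\S^k_+$ is replaced by the self-dual cone $\S^k_+(\CC)$ inside the real vector space of Hermitian matrices, which is exactly what you spell out. Your handling of the only delicate point (properness of the lift, fixed by passing to a face of $\S^k_+(\CC)$, which is again a complex psd cone of smaller size) matches the standard treatment, so nothing is missing.
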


The study of the semidefinite extension complexity of polytopes has seen several important recent breakthroughs, and has brought light to this notion of semidefinite rank. It turns out that the notions of signless and phaseless rank give a natural upper bound for these quantities.

\begin{proposition}[\cite{fawzi2015positive,lee2017some}]\label{prop:ineqs}
Given a nonnegative matrix $A$, we have $\rankpsd(A) \leq \rankp(\sqrt[\circ]{A})$ and $\rankpsdr(A) \leq \ranks(\sqrt[\circ]{A})$.
\end{proposition}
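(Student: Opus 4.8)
The plan is to exhibit explicit complex positive semidefinite factorizations of $A$ out of the factorization witnessing the phaseless rank, and similarly over the reals for the signless case. Starting with $A \in \RR^{n\times m}_+$, write $r = \rankp(\sqrt[\circ]{A})$ and pick a matrix $B \in \Omega(\sqrt[\circ]{A})$ with $\rank(B) = r$. Factor $B = U V^t$ with $U \in \CC^{n \times r}$ and $V \in \CC^{m \times r}$, and let $u_i$, $v_j$ denote the $i$-th row of $U$ and the $j$-th row of $V$ respectively, viewed as vectors in $\CC^r$. The key computation is the one already used in the proof of Lemma~\ref{lem:inequality}: setting $U_i = u_i u_i^* \in \S^r_+(\CC)$ and $V_j = v_j v_j^* \in \S^r_+(\CC)$, one has
$$\langle U_i, V_j \rangle = \Tr(u_i u_i^* v_j v_j^*) = |\langle u_i, v_j \rangle|^2 = |B_{ij}|^2 = A_{ij},$$
where we use that $|B_{ij}| = \sqrt{A_{ij}}$, so $|B_{ij}|^2 = A_{ij}$. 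This gives a complex psd factorization of $A$ of common size $r$, hence $\rankpsd(A) \leq r = \rankp(\sqrt[\circ]{A})$.

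For the second inequality, the argument is identical except that $B$ is chosen in $\Omega(\sqrt[\circ]{A}) \cap \RR^{n \times m}$ of rank $\ranks(\sqrt[\circ]{A})$, so $U$ and $V$ may be taken real, and then $U_i = u_i u_i^t$, $V_j = v_j v_j^t$ lie in $\S^r_+$ (real psd), yielding $\rankpsdr(A) \leq \ranks(\sqrt[\circ]{A})$.

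There is essentially no obstacle here; the only points requiring a word of care are that the Hermitian inner product on matrices is $\langle X, Y \rangle = \Tr(X Y^*)$ (so $\langle u_i u_i^*, v_j v_j^*\rangle = \Tr(u_i u_i^* v_j v_j^*)$, and since $v_j v_j^*$ is Hermitian this is what we want), and that the rank-$r$ factorization $B = UV^t$ really does exist for any $B$ of rank $r$. Both are standard. One should also note that $A$ being nonnegative is exactly what makes $\sqrt[\circ]{A}$ well-defined, so the statement is meaningful for all $A \in \RR^{n\times m}_+$. Since this is a known result (cited to \cite{fawzi2015positive,lee2017some}), it suffices to record this short derivation.
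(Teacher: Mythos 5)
Your proof is correct and is essentially the argument the paper itself invokes: it remarks that the proposition follows by the same computation as in Lemma~\ref{lem:inequality}, namely that a minimal factorization $B=UV^t$ of a matrix equimodular with $\sqrt[\circ]{A}$ yields a psd factorization of $A$ via outer products of the rows, exactly as you spell out (with the usual harmless conjugation adjustment, e.g.\ using $\bar u_i u_i^t$, to match $|B_{ij}|^2$ with the trace inner product). Nothing further is needed.
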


The proof of this result is essentially the one we used in Lemma \ref{lem:inequality}, as factorizations of an equimodular matrix with $\sqrt[\circ]{A}$ give rise to semidefinite factorizations to $A$ by taking outer products of the rows of the factors. This bound is particularly important in the study of polytopes, since it fully characterizes polytopes with minimal extension complexity.

\begin{proposition}[\cite{gouveia2013polytopes,goucha2017ranks}]
Given a $d$-polytope $P$, we have that its complex and real semidefinite complexities are at least $d+1$. Moreover, they are $d+1$ if and only if $\rankp(\sqrt[\circ]{S_P})=d+1$ or $\ranks(\sqrt[\circ]{S_P})=d+1$, respectively.
\end{proposition}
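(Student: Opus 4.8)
The plan is to establish the claim in two directions, using the bounds already collected in the excerpt together with the dimension-counting idea behind Lemma~\ref{lem:inequality}. First I would recall that the combined results of the preceding propositions give, for any $d$-polytope $P$, the chain $d+1 \le \rankpsd(S_P) \le \rankpsdr(S_P) \le \ranks(\sqrt[\circ]{S_P})$ and also $\rankpsd(S_P) \le \rankp(\sqrt[\circ]{S_P}) \le \ranks(\sqrt[\circ]{S_P})$, where the lower bound $d+1$ comes from the fact that a $d$-polytope cannot be written as a projection of a spectrahedron living in a cone of $d\times d$ or smaller matrices (this is the classical lower bound on extension complexity). So the "only if" direction amounts to: if $\rankp(\sqrt[\circ]{S_P}) = d+1$, then $\rankpsd(S_P) = d+1$ — but this is immediate, since $d+1 \le \rankpsd(S_P) \le \rankp(\sqrt[\circ]{S_P}) = d+1$; and symmetrically for the real/signless case.

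The substantive direction is the converse: if $\rankpsd(S_P) = d+1$ then $\rankp(\sqrt[\circ]{S_P}) = d+1$ (and the analogous real statement). Here I would take a minimal complex PSD factorization of $S_P$ of size $k = d+1$, i.e. hermitian PSD matrices $U_1,\dots,U_v, V_1,\dots,V_f \in \S^{d+1}_+(\CC)$ with $(S_P)_{ij} = \langle U_i, V_j\rangle$. The key structural input is that for a $d$-polytope, a size-$(d+1)$ semidefinite factorization of the slack matrix must be essentially rigid: each $U_i$ and each $V_j$ is forced to be rank one. The intuition is that a $(d+1)$-dimensional PSD lift of a $d$-polytope is, up to the obvious symmetries, the trivial "nonnegative orthant"-type lift — the slice of $\S^{d+1}_+$ is affinely isomorphic to a simplex-like object, and the only way to fit $P$ as a projection is for the factorization vectors to be rank-one, reproducing a rank-one (hence LP-type) factorization after taking square roots. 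Once each $U_i = u_i u_i^*$ and $V_j = v_j v_j^*$ with $u_i, v_j \in \CC^{d+1}$, we get $|\langle u_i, v_j\rangle|^2 = \langle U_i, V_j\rangle = (S_P)_{ij}$, so the matrix $B$ with entries $B_{ij} = \langle u_i, v_j\rangle$ is equimodular with $\sqrt[\circ]{S_P}$ and has rank at most $d+1$; combined with the lower bound $\rankp(\sqrt[\circ]{S_P}) \ge \rankpsd(S_P) \ge d+1$ coming from Proposition~\ref{prop:ineqs} and the extension-complexity bound, this gives equality. The real case runs identically with $u_i, v_j \in \RR^{d+1}$ and $\ranks$ in place of $\rankp$.

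The main obstacle is proving the rigidity claim — that a size-$(d+1)$ complex (or real) semidefinite factorization of the slack matrix of a $d$-polytope necessarily consists of rank-one factors. For the real case this is exactly the content of the cited result from \cite{gouveia2013polytopes}, and for the complex case the argument in \cite{goucha2017ranks} extends it; so in the write-up I would lean on those references for this step rather than reprove it. The cleanest route is: a lift of size exactly $d+1$ has no "room to spare," so the preimage of each vertex under the PSD lift is a single rank-one matrix (an extreme ray of the cone), and dually the facet-defining functionals correspond to rank-one matrices as well; feeding this back into the factorization $(S_P)_{ij} = \langle U_i, V_j\rangle$ forces both families to be rank one. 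Everything else — converting rank-one PSD factors into a genuine complex factorization of an equimodular matrix, and chasing the inequalities to pin down equality — is routine and follows the template of Lemma~\ref{lem:inequality}.
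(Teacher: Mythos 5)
The paper does not actually prove this proposition: it is quoted from \cite{gouveia2013polytopes,goucha2017ranks} and used as a black box, so there is no internal proof to compare against. Measured against how those references argue, your outline is essentially the right one: the direction $\rankp(\sqrt[\circ]{S_P})=d+1 \Rightarrow \rankpsd(S_P)=d+1$ follows from the chain $d+1\le \rankpsd(S_P)\le \rankp(\sqrt[\circ]{S_P})$ (Proposition~\ref{prop:ineqs} plus the classical $d+1$ lower bound, which is itself part of the cited result), and the converse is exactly the rank-one rigidity of size-$(d+1)$ semidefinite factorizations of $S_P$, which is the main theorem of \cite{gouveia2013polytopes} in the real case and of \cite{goucha2017ranks} in the complex case; once the factors are rank one, the matrix $B_{ij}=\langle u_i,v_j\rangle$ is equimodular with $\sqrt[\circ]{S_P}$ and has rank at most $d+1$, as you say. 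Two cautions if you intend this as more than a citation-level argument. First, the heuristic you give for rigidity is not correct as stated: a minimal lift of a psd-minimal $d$-polytope need not be a ``simplex-like'' slice of $\S^{d+1}_+$ (the square is psd-minimal and its minimal spectrahedral lift is not a simplex); the actual proofs run through orthogonality relations $\langle U_i,V_j\rangle=0$ for incident vertex--facet pairs together with a careful dimension count forcing the ranks down, and your sketch would not substitute for that. Second, be precise about what rigidity you need: the proposition only requires extracting vectors $u_i,v_j$ with $|\langle u_i,v_j\rangle|^2=(S_P)_{ij}$ from some minimal factorization, and that is the form in which the cited results should be invoked, rather than the stronger informal claim that every minimal factorization is automatically rank one. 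With those steps attributed to the references, the rest of your inequality chasing is correct for both the complex/$\rankp$ and real/$\ranks$ statements.
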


This fact allowed to characterize minimally sdp-representable polytopes in $\RR^3$ and $\RR^4$ in the real case (see \cite{gouveia2013polytopes,gouveia2017fourdim}) and has given some interesting consequences for the complex case (see \cite{goucha2017ranks}). One of the main motivations for us to study the phaseless rank comes precisely from this connection.

\subsection{Amoebas of determinantal varieties}

Another way of looking at phaseless rank is through amoeba theory. Amoebas are geometric objects that were introduced by Gelfand, Kapranov and Zelevinsky in \cite{gelfand2008discriminants} to study algebraic varieties. These complex analysis objects have applications in algebraic geometry, both complex and tropical, but are notoriously hard to work with. They are the image of a variety under the entrywise logarithm of the absolute values of the coordinates.
\begin{definition}
Given a complex variety $V \subseteq \mathbb{C}^{n}$, its \textit{amoeba} is defined as
$$\A(V)=\{ \text{Log}|z|=(\log|z_1|,\ldots,\log|z_n|):z\in V \cap {(\mathbb{C}^{*})}^{n}\}.$$
\end{definition}
Deciding if a point is on the amoeba of a given variety, the so called \emph{amoeba membership problem}, is notoriously hard,  making even the simple act of drawing an amoeba a definitely nontrivial task. Other questions like computing volumes or even dimensions of amoebas are also hard. A slightly more algebraic version of this object can be defined by simply taking the entrywise absolute values, and omitting the logarithm.
\begin{definition}
Given a complex variety $V \subseteq \mathbb{C}^{n}$, its \textit{algebraic} or \textit{unlog amoeba} is defined as
$$\Aalg(V)=\{|z|=(|z_1|,\ldots,|z_n|):z\in V \}.$$
\end{definition}
Considering this definition, it is clear how it relates to the notion of phaseless rank by way of \emph{determinantal varieties}. These and their corresponding ideals are a central object in both commutative algebra and algebraic geometry, and a great volume of research has been focused on studying them. Given positive integers $n,m$ and $k$, with $k \leq \min\{n,m\}$, we define the determinantal variety $Y_{k}^{n,m}$ as the set
of all $n\times m$ complex matrices of rank at most $k$. It is clear that this is simply the variety associated to $I^{n,m}_{k+1}$, the ideal of the $k+1$ minors of an $n\times m$ matrix with distinct variables as entries.


\begin{example} \label{ex:amoeba}
In Figure \ref{fig:amb} we consider the amoeba of the variety $V$ defined by the following $3 \times 3$ determinant:
$$\det \begin{bmatrix} 1 & x & y \\ x & 1 & z \\ y & 0 & 1 \end{bmatrix}=1-x^2+xyz-y^2=0.$$
\begin{figure}[H]
  \centering
    \centerline{\includegraphics[width=0.4\textwidth]{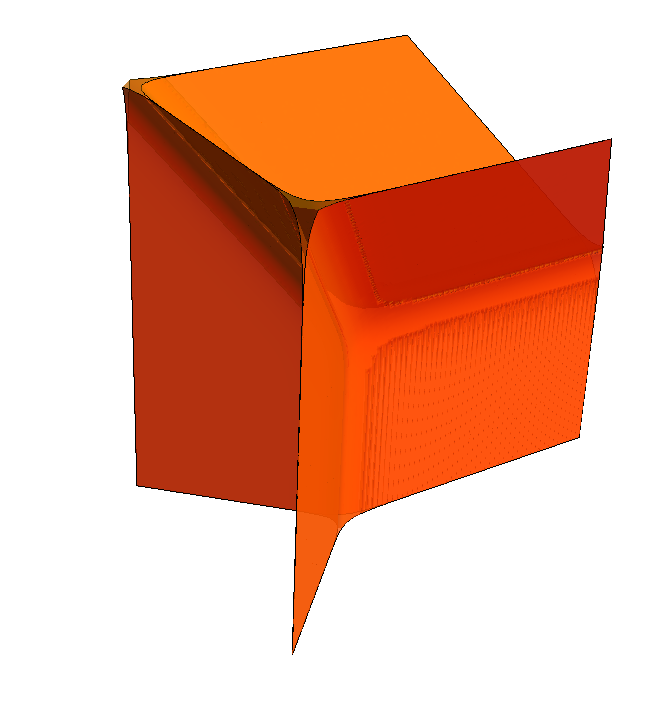} \hspace{2cm} \includegraphics[width=0.4\textwidth]{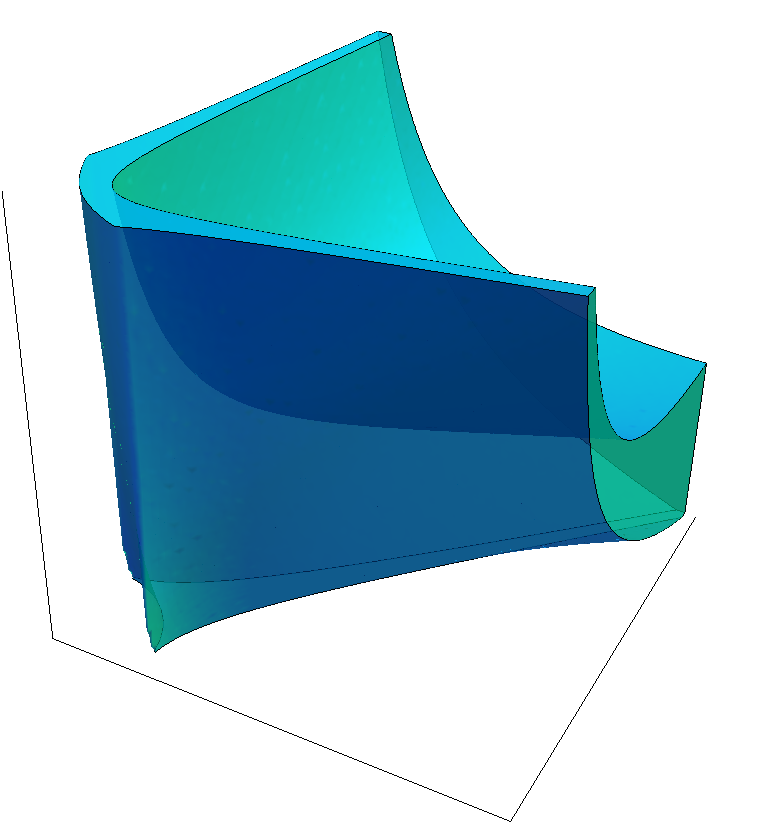}}
	\caption{$\A(V)$ and $\Aalg(V)$ of a determinantal variety.}
	\label{fig:amb}
\end{figure}
\end{example}

Note that directly from the definition of amoeba, we have that the locus of $n\times m$ matrices of phaseless rank at most $k$ is an algebraic amoeba of a determinantal variety, more precisely,
$$P^{n\times m}_{k} = \Aalg(Y_{k}^{n,m}).$$

\begin{example}
The blue region in Example \ref{ex:amoeba} is exactly the region of the values of $x, y$ and $z$ for which
$$\rankp  \begin{bmatrix} 1 & x & y \\ x & 1 & z \\ y & 0 & 1 \end{bmatrix} \leq 2.$$
This is not totally immediate, since in the phaseless rank definition we are allowed to freely choose a phase independently to each entry of the matrix, which includes the $1$'s and also the possibility of different phases for different copies of the same variable, which is not allowed in the amoeba definition. However, since multiplying rows and columns by unitary complex numbers does not change absolute values or rank, we can make any phase attribution into one of the right type, and the regions do coincide.
\end{example}


More generally, computing the phaseless rank of a matrix corresponds essentially to solving the membership problem in the determinantal amoeba, so any result on the phaseless rank can immediately be interpreted as a result about this fundamental object in amoeba theory. Also on the interconnectedness between amoebas and phaseless rank, see Proposition 5.2 from \cite{forsberg2000laurent}, which, in our language, states that the intersection of a fixed number of compactified hyperplane amoebas is empty if and only if the phaseless rank of a specific nonnegative matrix is maximal.

\section{Camion-Hoffman's Theorem}

In this section we set to revisit Camion-Hoffman's Theorem, originally proved in \cite{camion1966nonsingularity}. The main purpose of this section is to set the ideas behind this result in a language and generality that will be convenient for our goals, highlighting the facts that will be most useful, and introducing the necessary notation. For the sake of completeness a proof of the theorem is included. The main idea behind the proof is the simple observation that checking for nonmaximal phaseless rank is simply a linear programming feasibility problem, i.e., checking if a nonnegative matrix has nonmaximal phaseless rank amounts to checking if a specific polytope is nonempty. Here, by nonmaximal phaseless rank we mean that the phaseless rank is less than the minimum of the matrix dimensions.

Inspired by the language of amoeba theory (\cite{purbhoo2008nullstellensatz}) we introduce the notion of \emph{lopsidedness}. Simply put, a list of nonnegative numbers is lopsided if one is greater than the sum of all others.
It is easy to see geometrically, that a nonlopsided list of numbers can always be realized as the lengths of the sides of a polygon in $\RR^2$. Interpreting it in terms of complex numbers we get that a list of nonnegative real numbers $\{a_1,\dots,a_n\}$ is nonlopsided if and only if there are $\theta_k \in [0,2\pi]$ for which $\sum_{k=1}^n a_k e^{\theta_k i} = 0$. This is enough to give us a first characterization of nonmaximal phase rank.

\begin{lemma}\label{lem:lop}
Let $A \in \mathbb{R}^{n\times m}_+$, with $n\leq m$. Then, $\rankp(A)<n$ if and only if there is $\lambda \in \mathbb{R}^n_+$ with $\sum_{i=1}^{n} \lambda_i = 1$ such that, for $l=1,\ldots,m$, $\{A_{1l}\lambda_1,\ldots,A_{nl}\lambda_n\}$ is not lopsided.
\end{lemma}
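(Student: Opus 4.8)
The plan is to reduce the rank condition to a linear-dependence statement about the rows of an equimodular matrix, and then translate that statement entrywise using the polygon/angle dictionary for lopsidedness recalled just above the statement. The starting observation is the elementary fact that for an $n\times m$ matrix with $n\le m$, having rank strictly less than $n$ is equivalent to its $n$ rows being linearly dependent over $\mathbb{C}$.

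For the forward implication, suppose $\rankp(A)<n$ and pick $B\in\Omega(A)$ with $\rank(B)<n$. Then there is a nonzero $\mu\in\mathbb{C}^n$ with $\sum_{i=1}^n\mu_iB_{il}=0$ for every $l$. Set $\lambda_i=|\mu_i|/\sum_j|\mu_j|$; this is well defined since $\mu\neq 0$, lies in $\mathbb{R}^n_+$, and sums to $1$. Fixing $l$ and writing each nonzero $\mu_iB_{il}$ in polar form, the relation $\sum_i\mu_iB_{il}=0$, together with $|\mu_iB_{il}|=|\mu_i|A_{il}$, exhibits the numbers $\{|\mu_i|A_{il}\}_{i=1}^n$ as closing up into a polygon, so this list is nonlopsided; since lopsidedness is invariant under scaling all entries by the positive constant $1/\sum_j|\mu_j|$, the list $\{A_{1l}\lambda_1,\dots,A_{nl}\lambda_n\}$ is nonlopsided as well, which is what we want.

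For the converse, I would run the construction backwards: given such a $\lambda$, the characterization of nonlopsidedness provides, for each $l$, angles $\theta_{il}\in[0,2\pi]$ with $\sum_{i=1}^n\lambda_iA_{il}e^{\theta_{il}i}=0$. Then $B\in\mathbb{C}^{n\times m}$ defined by $B_{il}=A_{il}e^{\theta_{il}i}$ satisfies $B\in\Omega(A)$ and $\sum_i\lambda_iB_{il}=0$ for all $l$; since $\lambda\neq0$, the rows of $B$ are linearly dependent, so $\rank(B)\le n-1$ and hence $\rankp(A)<n$. There is no real obstacle here beyond bookkeeping: the only points needing care are the normalization producing the prescribed constraint $\sum_i\lambda_i=1$ (harmless because lopsidedness is scale-invariant) and the degenerate cases where some $\lambda_i$ or $\mu_i$ vanishes, in which case the corresponding term simply drops out of all the sums and every step above still goes through verbatim.
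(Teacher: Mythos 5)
Your proof is correct and follows essentially the same route as the paper's: translating $\rankp(A)<n$ into linear dependence of the rows of an equimodular matrix, and converting the vanishing of each column sum into nonlopsidedness of $\{A_{1l}\lambda_1,\dots,A_{nl}\lambda_n\}$ via the phase characterization, with $\lambda$ the normalized absolute values of the dependence coefficients. The only difference is presentational (you split the two directions, the paper writes a chain of equivalences), and your handling of normalization and vanishing coefficients matches what the paper leaves implicit.
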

\begin{proof}
First note that $\rankp(A)<n$ if and only if there exists a matrix $B$ with $B_{kl}=A_{kl} e^{i\theta_{kl}}$ for all $k,l$, such that $\rank(B)<n$. This is the same as saying that the rows of $B$ are linearly dependent, and so there exists a nonzero complex vector $z=(z_1,\ldots,z_n)$ such that $\sum |z_j|=1$ and $\sum_{k=1}^n A_{kl} z_k e^{i\theta_{kl}}=0$, for $l=1,\ldots,m$.
By the observation above, this is equivalent to saying that, for $l=1,\ldots,m$, $\{A_{1l}|z_1|,\ldots,A_{nl}|z_n|\}$ is not lopsided.
\end{proof}

The previous result tells us essentially that $\rankp(A)<n$ if and only if we can scale rows of $A$ by nonnegative numbers in such a way that the entries on each of the columns verify the generalized triangular inequalities. The conditions for a matrix $A \in \mathbb{R}^{n\times m}_+$, with $n\leq m$, to verify $\rankp(A)<n$ can now be simply stated as checking if there exists $\lambda \in \mathbb{R}^n$ such that
$$\begin{cases}
A_{ij} \lambda_i \leq \sum_{k\neq i} A_{kj} \lambda_k, \, j=1,\ldots,m, \, i=1,\ldots,n \\ \\
\lambda_i\geq 0, \, i=1,\ldots,n \\ \\
\sum_{i=1}^{n} \lambda_i = 1.
\end{cases}$$
We have just observed the following result.

\begin{corollary}\label{cor:lp}
Given $A \in \mathbb{R}^{n\times m}_+$, with $n\leq m$, deciding if $\rankp(A)<n$ is a linear programming feasibility problem.
\end{corollary}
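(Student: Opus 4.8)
The plan is to simply read off the linear system from Lemma~\ref{lem:lop}. By that lemma, $\rankp(A)<n$ holds precisely when there is a vector $\lambda \in \mathbb{R}^n$ with $\lambda_i \geq 0$ for all $i$ and $\sum_{i=1}^n \lambda_i = 1$, such that for every column index $l$ the list $\{A_{1l}\lambda_1,\ldots,A_{nl}\lambda_n\}$ fails to be lopsided. So the first step is to unpack what ``not lopsided'' means entrywise: a list of nonnegative reals is lopsided exactly when one of them exceeds the sum of the others, hence it is \emph{not} lopsided exactly when, for each $i$, $A_{il}\lambda_i \leq \sum_{k\neq i} A_{kl}\lambda_k$. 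Note that we do need $\lambda \geq 0$ for the numbers $A_{il}\lambda_i$ to form a genuine list of nonnegative reals to which the notion of lopsidedness applies; this is already among the constraints.

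The second step is to collect all of these conditions. Ranging over $i=1,\ldots,n$ and $l=1,\ldots,m$ we obtain $nm$ inequalities that are linear in the unknown $\lambda$, and adjoining the $n$ sign constraints $\lambda_i \geq 0$ together with the single affine equation $\sum_{i=1}^n \lambda_i = 1$ we recover exactly the system displayed before the statement. This system describes a polytope in $\mathbb{R}^n$, and by Lemma~\ref{lem:lop} that polytope is nonempty if and only if $\rankp(A)<n$. Deciding whether a polytope presented by finitely many linear inequalities and equalities is nonempty is, by definition, a linear programming feasibility problem, which is all that is claimed.

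There is essentially no obstacle here; the only point requiring a modicum of care is that the normalization $\sum_i \lambda_i = 1$ cannot be dropped, since $\lambda = 0$ would otherwise satisfy every lopsidedness inequality trivially and render the feasibility test vacuous --- the role of Lemma~\ref{lem:lop} is precisely to certify that this normalization can be imposed without loss of generality. One may also observe that the equation could be replaced by the inequality $\sum_i \lambda_i \geq 1$ (by rescaling any feasible $\lambda$), turning the description into a pure system of linear inequalities, but this is only a cosmetic variant.
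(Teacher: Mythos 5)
Your proposal is correct and follows essentially the same route as the paper: it unpacks the non-lopsidedness condition of Lemma~\ref{lem:lop} into the explicit system of linear inequalities in $\lambda$, together with $\lambda \geq 0$ and $\sum_i \lambda_i = 1$, and notes that testing feasibility of this system is a linear programming feasibility problem. The extra remarks about the role of the normalization are fine but not needed beyond what the lemma already guarantees.
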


Note that this gives us a polynomial time algorithm (on the encoding length) for checking nonmaximality of the phaseless rank. Equivalently, this gives us a  polynomial time algorithm to solve the amoeba membership problem for the determinantal variety of maximal minors.

We are now almost ready to state and prove a version of the result of Camion-Hoffman. We need only to briefly introduce some facts about $M$-matrices.
\begin{definition}
An $n\times n$ real matrix $A$ is an M-matrix if it has nonpositive off-diagonal entries and all its eigenvalues have nonnegative real part.
\end{definition}
The class of $M$-matrices is well studied, and there are numerous equivalent characterizations for them. Of particular interest to us will be the following characterizations.

\begin{proposition} \label{prop:mmatrix}
Let $A \in \RR^{n\times n}$ have nonpositive off-diagonal entries. Then the following are equivalent.
\begin{enumerate}[label=(\roman*)]
\item $A$ is a nonsingular $M$-matrix;
\item There exists $x \geq 0$ such that $Ax > 0$;
\item The diagonal entries of $A$ are positive and there exists a diagonal matrix $D$ such that $AD$ is strictly diagonally dominant;
\item All leading principal minors are positive;
\item The diagonal entries of $A$ are positive and all leading principal minors of size at least $3$ are positive;
\item Every real eigenvalue of $A$ is positive.
\end{enumerate}
\end{proposition}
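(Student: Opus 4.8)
The plan is to prove the six conditions equivalent by running a cycle through the ``existential'' ones, (ii), (iii) and (iv), and then grafting the spectral ones, (i) and (vi), onto that cycle. The bridge between the two groups is classical Perron--Frobenius theory applied to the nonnegative matrix $B:=sI-A$, where $s>0$ is taken large enough that $B$ has nonnegative entries (its off-diagonal entries are automatically nonnegative by the standing hypothesis on $A$). Since $\lambda$ is an eigenvalue of $A$ exactly when $s-\lambda$ is one of $B$, and since Perron--Frobenius makes $\rho(B)$ a nonnegative real eigenvalue of $B$ dominating the modulus of every eigenvalue of $B$, condition (i) is equivalent to the inequality $s>\rho(B)$ --- indeed (i) forces the real eigenvalue $s-\rho(B)$ of $A$ to be $\geq 0$ and nonzero --- and when $s>\rho(B)$ the Neumann series gives $A^{-1}=s^{-1}\sum_{k\geq 0}(B/s)^{k}\geq 0$ entrywise.

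The implications then run as follows. For $(i)\Rightarrow(ii)$, set $x:=A^{-1}\mathbf{1}$: then $x\geq 0$ (in fact $x>0$, since the invertible $A^{-1}$ has no zero row) and $Ax=\mathbf{1}>0$. For $(ii)\Rightarrow(iii)$, first observe $x>0$: if $x_i=0$ then $(Ax)_i=\sum_{j\neq i}A_{ij}x_j\leq 0$ by the sign hypothesis, contradicting $Ax>0$; then rewriting $(Ax)_i>0$ as $A_{ii}x_i>\sum_{j\neq i}|A_{ij}|\,x_j\geq 0$ gives $A_{ii}>0$, and the positive diagonal matrix $D:=\mathrm{diag}(x)$ makes $AD$ strictly row-diagonally dominant. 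For $(iii)\Rightarrow(iv)$, use that a real matrix which is strictly diagonally dominant with positive diagonal has positive determinant (deform its off-diagonal part linearly to $0$: strict dominance, hence invertibility, is preserved and the determinant is positive at the diagonal endpoint); applying this to every leading principal submatrix of $AD$ and dividing $\det\big((AD)_{[k]}\big)>0$ by $\det(D_{[k]})=\prod_{i\leq k}d_i>0$ yields $\det(A_{[k]})>0$. To close the cycle, $(iv)\Rightarrow(ii)$: I claim that any matrix with nonpositive off-diagonal entries and all leading principal minors positive satisfies $A^{-1}\geq 0$; granting this, $x:=A^{-1}\mathbf{1}>0$ and $Ax=\mathbf{1}>0$ as before. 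Finally $(ii)\Rightarrow(i)$: for $x>0$ with $Ax>0$ we have $Bx=sx-Ax<sx$ componentwise, so $\rho(B)\leq\max_i (Bx)_i/x_i<s$ (the elementary half of the Collatz--Wielandt characterization of the spectral radius, which holds for every $B\geq 0$), and hence every eigenvalue $s-\mu$ of $A$ has real part $s-\operatorname{Re}\mu\geq s-|\mu|\geq s-\rho(B)>0$, making $A$ a nonsingular M-matrix. The equivalence with (vi) is now easy: $(i)\Rightarrow(vi)$ is immediate, and for $(vi)\Rightarrow(i)$ apply (vi) to the real eigenvalue $s-\rho(B)$ of $A$ to get $s>\rho(B)$, then argue as just above.

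I expect the main obstacle to be the inverse-nonnegativity claim used in $(iv)\Rightarrow(ii)$ --- the single place where purely determinantal hypotheses must be turned into a statement about $A^{-1}$. I would prove it by induction on $n$ through the Schur complement: writing $A$ in block form with leading $(n-1)\times(n-1)$ block $A'$, off-diagonal blocks $b,c^t\leq 0$, and corner $d=A_{nn}$, the block $A'$ again has nonpositive off-diagonal entries and positive leading principal minors, so $(A')^{-1}\geq 0$ by induction; the scalar Schur complement $\sigma:=d-c^t(A')^{-1}b$ equals $\det A/\det A'>0$; and in the standard block formula for $A^{-1}$ every block turns out nonnegative because $b,c\leq 0$, $(A')^{-1}\geq 0$ and $\sigma>0$, the only real work being the sign bookkeeping. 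It remains to handle (v). The implication $(iv)\Rightarrow(v)$ is trivial, the diagonal entries being positive since the $1\times 1$ and $2\times 2$ leading minors are, using $A_{12}A_{21}\geq 0$. For $(v)\Rightarrow(iv)$ with $n\geq 3$, expand the $3\times 3$ leading minor along its last row: since the diagonal entries are positive and all off-diagonal entries are $\leq 0$, the cofactors of $A_{31}$ and $A_{32}$ are $\geq 0$ (so those two terms are $\leq 0$), while the cofactor of $A_{33}>0$ is precisely the $2\times 2$ leading minor; hence a nonpositive $2\times 2$ leading minor would force the (positive) $3\times 3$ leading minor to be $\leq 0$, a contradiction, so the $2\times 2$ leading minor is positive, the $1\times 1$ one is positive by hypothesis, and all larger leading minors are given --- the cases $n\leq 2$ being immediate.
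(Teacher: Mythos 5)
Your proposal is correct in its overall architecture, but it is worth noting that the paper does not prove this proposition at all: the authors simply cite Theorem 2.3 of Berman--Plemmons for (ii), (iii), (iv), (vi) and Corollary 2.3 of Poole's survey for (v). So what you have written is a genuinely different contribution, namely a self-contained proof: the cycle (ii)$\Rightarrow$(iii)$\Rightarrow$(iv)$\Rightarrow$(ii) handled by elementary sign bookkeeping, a Schur-complement induction for inverse-nonnegativity, and the splitting $A=sI-B$ with $B\geq 0$ plus Perron--Frobenius and the easy half of Collatz--Wielandt to attach (i) and (vi). All of the main steps check out: the lopsided-free deduction that $x>0$ and $A_{ii}>0$ in (ii)$\Rightarrow$(iii), the homotopy argument for positivity of the determinant of a strictly dominant matrix with positive diagonal, the block-inverse sign analysis in (iv)$\Rightarrow$(ii), and the bound $\rho(B)\leq\max_i(Bx)_i/x_i$, which indeed needs no irreducibility. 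This buys a proof readable without the M-matrix literature, at the cost of length; the paper's citation buys brevity and the full strength of the classical equivalences.

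Three points need repair, all local. First, in (iii)$\Rightarrow$(iv) you divide by $\det(D_{[k]})=\prod_{i\leq k}d_i>0$, but condition (iii) as stated only provides \emph{some} diagonal $D$; you should note that strict diagonal dominance depends only on the absolute values of the entries, so $D$ may be replaced by $\mathrm{diag}(|d_1|,\dots,|d_n|)$ (no $d_i$ can vanish, else a zero column kills dominance), after which your division and your endpoint computation are legitimate. Second, your claim that (iv)$\Rightarrow$(v) is ``trivial'' because the $1\times1$ and $2\times2$ leading minors force positive diagonal only handles $A_{11}$ and $A_{22}$; positivity of $A_{33},\dots,A_{nn}$ does not follow from those two minors. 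The cleanest fix inside your own scheme is to route through the already-proved chain (iv)$\Rightarrow$(ii)$\Rightarrow$(iii), whose intermediate step delivers positivity of \emph{all} diagonal entries. Third, ``the cases $n\leq 2$ being immediate'' in (v)$\Rightarrow$(iv) is not right for $n=2$: there (v) reduces to positivity of the diagonal and the matrix
$\begin{pmatrix} 1 & -2\\ -2 & 1\end{pmatrix}$
satisfies it while having determinant $-3$ and the real eigenvalue $-1$. This is an edge case of the statement itself (your own expansion argument shows it cannot occur as a leading block once $n\geq 3$, and the paper only ever invokes (v) for $n\geq 3$), but your write-up should either restrict (v) to $n\geq 3$ or flag the degenerate case explicitly rather than call it immediate.
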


\begin{remark}
Characterizations ii, iii, iv and vi can be found in Theorem 2.3 of \cite{doi:10.1137/1.9781611971262} and v in Corollary 2.3 of \cite{poole1974survey}.
\end{remark}

Finally, recall that given $A \in \mathbb{C}^{n\times n}$, its \emph{comparison matrix}, $\mathcal{M}(A)$, is defined by $\mathcal{M}(A)_{ij} = |A_{ij}|$, if $i=j$, and  $\mathcal{M}(A)_{ij} = -|A_{ij}|$, otherwise.

\begin{theorem}[Camion-Hoffman's Theorem] \label{thm:cam_hof}
Given  $A \in \mathbb{R}^{n\times n}_{+}$, $\rankp(A)=n$ if and only if there exists a permutation matrix $P$ such that $\mathcal{M}(AP)$ is a nonsingular M-matrix.
\end{theorem}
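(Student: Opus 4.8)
The plan is to translate the statement into the linear-programming picture from Corollary \ref{cor:lp} and then recognize the resulting feasibility conditions as a characterization of $M$-matrices from Proposition \ref{prop:mmatrix}. By Lemma \ref{lem:lop} applied with $n=m$, the condition $\rankp(A) < n$ is equivalent to the existence of $\lambda \in \RR^n_+$, not identically zero, such that for every column $j$ the list $\{A_{1j}\lambda_1,\dots,A_{nj}\lambda_n\}$ is nonlopsided, i.e.
$$A_{jj}\lambda_j \leq \sum_{k \neq j} A_{kj}\lambda_k \quad \text{for all } j.$$
(Here I am implicitly using a permutation to line up the "distinguished" entry in column $j$ with row $j$; the permutation matrix $P$ in the statement is exactly the bookkeeping device that lets us always take the diagonal as the distinguished position.) So $\rankp(A) = n$ precisely when, for \emph{every} permutation $P$, this system has no nontrivial nonnegative solution. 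Writing $C = \mathcal{M}(AP)$, whose diagonal is the diagonal of $AP$ and whose off-diagonal entries are the negatives of those of $AP$, the negation of the above system for all $\lambda \geq 0$, $\lambda \neq 0$ is: there is no $\lambda \geq 0$, $\lambda\neq 0$, with $C^t \lambda \leq 0$ — equivalently (after transposing and swapping the roles of $P$ and $P^t$, which just reindexes the permutation) there is no such $\lambda$ with $C\lambda \le 0$.

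The next step is to pass from "no nonzero $\lambda \ge 0$ with $C\lambda \le 0$" to "there exists $x \ge 0$ with $Cx > 0$", which is characterization (ii) of a nonsingular $M$-matrix. This is a theorem-of-the-alternative / LP-duality argument: the set $\{\lambda \ge 0 : C\lambda \le 0\}$ is a polyhedral cone, and it is trivial (equals $\{0\}$) if and only if the dual description produces a strictly positive functional, i.e. some $x \ge 0$ with $x^t C^t$... — one has to be a little careful here because the clean alternative is between $\{\lambda \ge 0, C\lambda \le 0, \lambda \ne 0\}$ and $\{x > 0 \text{ row vector}: xC > 0\}$ type statements, and matching this exactly to (ii) (which is stated with $x \ge 0$, not $x > 0$, and $Cx > 0$) requires invoking that $C$ has nonpositive off-diagonal entries so that the relevant Stiemke/Gordan-type alternative collapses correctly. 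I expect this to be the main technical obstacle: getting the quantifiers and the strict-versus-nonstrict inequalities to line up so that "the cone is trivial" is \emph{literally} condition (ii), rather than something a hair weaker that then needs an extra argument (possibly using irreducibility or a perturbation) to upgrade.

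Assembling the pieces: $\rankp(A) = n$ iff for every permutation matrix $P$ the cone $\{\lambda \ge 0 : \mathcal{M}(AP)\lambda \le 0\}$ is trivial, iff (by the alternative) for every $P$ there exists $x \ge 0$ with $\mathcal{M}(AP)x > 0$, iff (by Proposition \ref{prop:mmatrix}, (ii)$\Leftrightarrow$(i)) for every $P$ the matrix $\mathcal{M}(AP)$ is a nonsingular $M$-matrix. This proves a slightly stronger-sounding "for all $P$" version; the "there exists $P$" in the theorem statement comes from the contrapositive — $\rankp(A) < n$ iff \emph{some} $P$ makes the cone nontrivial, equivalently \emph{some} $\mathcal{M}(AP)$ fails to be a nonsingular $M$-matrix — and one should double-check that the statement as written ("$\rankp(A)=n$ iff there exists $P$ with $\mathcal{M}(AP)$ a nonsingular $M$-matrix") is intended to be read with the for-all/exists duality baked in, or whether there is a monotonicity reason that makes the "exists" and "for all" versions coincide here; I would clarify that point when writing the final proof. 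A minor additional care point is the boundary case where $A$ has a zero column or zero row, where $\rankp(A) < n$ is immediate and one checks $\mathcal{M}(AP)$ is singular for every $P$, so the equivalence is vacuously consistent.
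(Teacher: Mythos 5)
There is a genuine gap, and it occurs at your very first translation step. Nonlopsidedness of column $j$ under the weights $\lambda$ is a \emph{conjunction} of $n$ inequalities, one for each candidate dominant entry; Lemma \ref{lem:lop} therefore yields a single linear feasibility system with $n\cdot n$ inequalities and no permutation anywhere in it. You instead keep only one inequality per column (the one whose distinguished entry sits on the diagonal) and treat the permutation $P$ as ``bookkeeping'' for which entry is distinguished. That changes the problem, and the damage surfaces in your quantifiers: you arrive at ``$\rankp(A)=n$ iff $\mathcal{M}(AP)$ is a nonsingular $M$-matrix for \emph{every} $P$'', which is false. For example, $A=I_n+\epsilon J_n$ with $\epsilon>0$ small is strictly diagonally dominant, so every matrix equimodular with it is nonsingular and $\rankp(A)=n$; yet if $P$ swaps the first two columns, the second leading principal minor of $\mathcal{M}(AP)$ equals $\epsilon^2-(1+\epsilon)^2<0$, so $\mathcal{M}(AP)$ is not a nonsingular $M$-matrix by Proposition \ref{prop:mmatrix}(iv). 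The ``exists versus for-all'' discrepancy you flag at the end is therefore not a presentational point that some monotonicity argument will repair; it is the symptom of having discarded $n^2-n$ of the constraints.

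The missing idea is where the permutation actually comes from. In the paper's proof one keeps the full system $M\lambda\le 0$, $\lambda\ge 0$, $\sum_i\lambda_i=1$ with all $n^2$ lopsidedness inequalities, applies Ville's theorem to its infeasibility to obtain $y\ge 0$ with $y^TM>0$, and only then extracts the permutation: by Carath\'eodory's Theorem the certificate can be taken to be supported on $n$ rows of $M$, and a sign-pattern argument (each column of the selected $n\times n$ submatrix must contain its unique nonnegative entry, and no two selected rows may come from the same block $M_i$) forces these $n$ rows to pick one distinguished row per column, bijectively; the reduced certificate then reads $\mathcal{M}(AP)y'>0$ for a single permutation matrix $P$, which is exactly condition (ii) of Proposition \ref{prop:mmatrix}. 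In other words, the permutation is produced a posteriori by the dual certificate, not imposed a priori on the primal constraints. Your worry about strict versus nonstrict inequalities in the theorem of the alternative is the minor point here (Ville's theorem is precisely the right alternative); without the Carath\'eodory and sign-pattern step, however, no rearrangement of your argument can produce the correct ``there exists $P$'' statement.
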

\begin{proof}
Let the entries of $A$ be denoted by $a_{ij}$, $1 \leq i,j \leq n$. By Corollary \ref{cor:lp}, $\rankp(A)=n$, if and only if the linear problem
$$M \lambda \leq 0, \quad \lambda\geq 0, \quad \sum_{i=1}^{n} \lambda_i = 1$$
is not feasible, where
$$M=\begin{bmatrix} M_1 \\ M_2 \\ \vdots \\ M_n\end{bmatrix}, \ \ \textrm{ with } M_i=\begin{bmatrix}
a_{1i} & -a_{2i} & \ldots & -a_{ni}\\
-a_{1i} & a_{2i} & \ldots & -a_{ni}\\
\vdots & \vdots & \ddots & \vdots\\
-a_{1i} & -a_{2i} & \ldots & a_{ni}\end{bmatrix} \textrm{ for } i=1,\dots,n.
$$
By Ville's Theorem, a simple variant of Farkas' Lemma, this is equivalent to the existence of $y\geq 0$ such that $y^T M > 0$. Furthermore, since $y^T M$ is in the convex cone generated by the rows of $M$, then, by Carath\'{e}odory's Theorem, $y^T M$ can be written as a nonnegative combination of $n$ rows of $M$. Let us call $y'^T M'$ to this representation of $y^T M$, where $M'$ is a submatrix of $M$ containing exactly $n$ rows of $M$ and $y'\geq 0$.

We first observe that each column of $M'$ has exactly one nonnegative entry and all components of $y'$ should be positive.
Furthermore, if two rows of $M'$ are come from the same $M_i$, the components of $y'^T M'$ will not be all positive. So, there are $n!$ possibilities for $M'$, given by ${M'}^T=\mathcal{M}(AP)$, for some permutation matrix $P$. But then, the existence of $y'\geq 0$ such that $\mathcal{M}(AP)y'>0$ is equivalent to $\mathcal{M}(AP)$ being a nonsingular $M$-matrix by Proposition \ref{prop:mmatrix}, concluding the proof.
\end{proof}

Note that, while equivalent, this is not the original statement of Camion-Hoffman's result. This precise version can be found, for example, in \cite{BRADLEY1969105}, as a corollary of a stronger result. The way it is originally stated, Camion-Hoffman's Theorem says that, if $A$ is an $n\times n$ matrix with nonnegative entries, every complex matrix in the equimodular class of $A$, $\Omega(A)$, is nonsingular if and only if there exists a permutation matrix $P$ and a positive diagonal matrix $D$ such that $PAD$ is strictly diagonally dominant. Proposition \ref{prop:mmatrix} immediately gives us the equivalence of both statements. We also highlight Proposition 5.3 from \cite{forsberg2000laurent}, where the authors rediscover Camion-Hoffman's Theorem in an amoeba theory context.

\begin{example} \label{ex:3x3characterization}
Let us see how Camion-Hoffman's Theorem applies to a $3\times 3$ matrix. Let $X \in \RR_+^{n\times n}$ have entries $[x_{ij}]$. We want to characterize $P_2^{3 \times 3}$, that is to say, when is $\rankp(X)\leq 2$.
By Camion-Hoffman's Theorem, this happens if and only if for every permutation matrix $P \in S_3$, we have that $\mathcal{M}(XP)$ is not a nonsingular $M$-matrix. By Proposition \ref{prop:mmatrix}, checking
if $\mathcal{M}(XP)$ is a nonsingular  $M$-matrix amounts to checking if its determinant is positive (since it is a $3\times 3$ matrix).

Hence, $\rankp(X) \leq 2$ if and only if $\det(\mathcal{M}(XP)) \leq 0$ for all $P \in S_3$. There are $6$ possible matrices $P$ giving rise to $6$ inequalities. For $P$ equal to the identity, for example, we get
$$\det \begin{bmatrix} x_{11} &-x_{12}  & -x_{13} \\ -x_{21} &x_{22}  & -x_{23} \\ -x_{31} &-x_{32}  & x_{33} \\ \end{bmatrix}  \leq 0,$$
which means
$$x_{11}x_{22}x_{33}- x_{11}x_{23}x_{32} - x_{12}x_{21}x_{33}-x_{12}x_{23}x_{31} - x_{13}x_{21}x_{32} - x_{13}x_{22}x_{31} \leq 0.$$
It is not hard to check that any other $P$ will result in a similar equality, where one monomial of the terms of the expansion of the determinant of $X$ appears with a positive sign, and all others with a negative sign.
\end{example}

This can be very useful to understand the geometry of the phaseless rank, as seen in a slightly more concrete example.

\begin{example}
Building from Example \ref{ex:3x3characterization}, let us characterize the nonnegative values of $x$ and $y$ for which the circulant matrix
$$\begin{bmatrix}
1 & x & y \\ y & 1 & x \\ x & y & 1
\end{bmatrix}$$
has phaseless rank less than $3$. Computing the six polynomials determined in that example, we find that they collapse to just four distinct ones:
$$1-x^3-y^3-3 y x, \ \  -1+x^3-y^3-3 y x, \ \ -1-x^3+y^3-3 y x, \ \ -1-x^3-y^3- y x .$$
For nonnegative $x$ and $y$, the last one is always negative, so it can be ignored. Furthermore, the other three factor each into a linear term and a nonnegative quadratic term, which can also be ignored, so we are left only with the three linear inequalities
$$1-x-y \leq 0, \ \ 1+x-y \leq 0, \ \ 1-x+y \leq 0.$$
\begin{figure}[H]
  \centering
    \centerline{\includegraphics[width=0.4\textwidth]{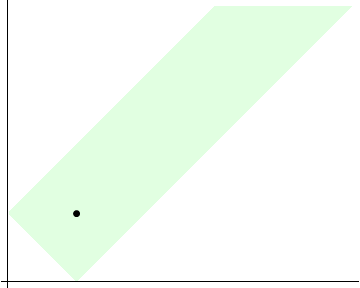}}
	\caption{Region where the $3 \times 3$ nonnegative circulant matrices have nonmaximal $\rankp$}
	\label{fig:circ}
\end{figure}
In Figure \ref{fig:circ} we can observe the region. Note that the only singular matrix in that region is that for which $x=y=1$, highlighted in the figure, every other one has usual rank equal to three. It is not hard to check that the signless rank additionally drops to two precisely on the boundary of the region.
\end{example}

\section{Consequences and extensions}

In this section, we derive some new results and strengthen some old ones, based on both Camion-Hoffman's result and, more generally, the underlying idea of using linear programming theory to study the phaseless rank.

\subsection{The rectangular case}

While we now have a full characterization for square matrices with nonmaximal phaseless rank, we are interested in extending it to more general settings. In this section we will study the case of rectangular matrices.
Note that since transposition preserves the rank, we might restrict ourselves always to the case of $A \in \RR^{n \times m}$ with $n \leq m$ for ease of notation. The simplest question one can ask is when does such a matrix have nonmaximal phaseless rank, i.e., when is $\rankp(A)<n$?

Denote by $A_I$, where $I$ is a set of $n$ distinct numbers between $1$ and $m$, the $n\times n$ submatrix of $A$ of columns indexed by elements of $I$. It is clear that if $A$ has phaseless rank less than $n$ so does $A_I$,
since the submatrices $B_I$ of a complex matrix $B$ that is equimodular with $A$ and has rank less than $n$ will be, themselves, equimodular to the matrices $A_I$ and have rank less than $n$. The reciprocal is much less clear, since the existence of singular matrices equimodular with each of the $A_I$ does not seem to imply the existence of a singular matrix globally equimodular with $A$, since patching together the phases attributions to different submatrices is not trivial. Surprisingly, the result does hold.

\begin{proposition}\label{prop:rect}
Let $A \in \mathbb{R}^{n\times m}_+$, with $n\leq m$. Then, $\rankp(A)<n$ if and only if  $\rankp(A_I)<n$ for all $I \subseteq \{1,\dots,m\}$ with $|I|=n$.
\end{proposition}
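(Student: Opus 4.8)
The plan is to leverage the linear-programming characterization of Lemma~\ref{lem:lop} together with a Helly-type / LP-duality argument. One direction is immediate: if $\rankp(A)<n$, then restricting a rank-deficient equimodular $B$ to any $n$ columns yields $B_I$ equimodular with $A_I$ and still of rank $<n$, so $\rankp(A_I)<n$.

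For the converse, I would translate everything into the feasibility problem from Lemma~\ref{lem:lop}. Say $\rankp(A_I)<n$ for every $n$-subset $I$. By Lemma~\ref{lem:lop} applied to each square submatrix $A_I$, there is a probability vector $\lambda^{(I)}\in\RR^n_+$ with $\sum_i\lambda^{(I)}_i=1$ making $\{A_{1l}\lambda^{(I)}_1,\dots,A_{nl}\lambda^{(I)}_n\}$ nonlopsided for every column $l\in I$. The goal is to produce a \emph{single} $\lambda\in\RR^n_+$, $\sum\lambda_i=1$, that works for \emph{all} $m$ columns simultaneously; by Lemma~\ref{lem:lop} that gives $\rankp(A)<n$. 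The natural tool is the following: consider the polytope $Q=\{\lambda\in\RR^n:\ \lambda\ge 0,\ \sum_i\lambda_i=1,\ A_{il}\lambda_i\le\sum_{k\neq i}A_{kl}\lambda_k\ \forall i, \forall l=1,\dots,m\}$, and for each column $l$ let $Q_l$ be the same polytope with only the constraints coming from column $l$. Then $Q=\bigcap_{l=1}^m Q_l$, each $Q_l$ is a convex subset of the $(n-1)$-dimensional simplex, and the hypothesis says precisely that $\bigcap_{l\in I}Q_l\neq\emptyset$ for every $n$-element index set $I$. I want to conclude $\bigcap_{l=1}^m Q_l\neq\emptyset$, which is exactly Helly's theorem in dimension $n-1$: a finite family of convex sets in $\RR^{n-1}$ whose every $n$ members have a common point has a common point. (Here each $Q_l$ sits inside the affine hyperplane $\sum\lambda_i=1$, which is $(n-1)$-dimensional, so the Helly number is $n$, matching the subset size in the statement.)

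So the skeleton is: (1) the easy direction by restriction; (2) rephrase Lemma~\ref{lem:lop} for $A$ and each $A_I$ as nonemptiness of the convex sets $Q$ and $\bigcap_{l\in I}Q_l$ respectively, noting $Q_I:=\bigcap_{l\in I}Q_l$ is exactly the $\lambda$-polytope for the submatrix $A_I$ (the constraints in Lemma~\ref{lem:lop}/Corollary~\ref{cor:lp} for $A_I$ are literally the column-$l$ constraints for $l\in I$); (3) observe each $Q_l$ is convex and lies in the $(n-1)$-dimensional affine space $\{\sum\lambda_i=1\}$; (4) invoke Helly's theorem in that affine space to get $Q=\bigcap_l Q_l\neq\emptyset$; (5) apply Lemma~\ref{lem:lop} once more to conclude $\rankp(A)<n$.

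The one subtlety I expect to need care with — and the step that is the real content — is matching the Helly number to the subset size $n$. This works out cleanly because the normalization $\sum\lambda_i=1$ cuts the ambient $\RR^n$ down to dimension $n-1$; without it one would only get a statement about $(n+1)$-subsets. A minor point to check is that when $\rankp(A_I)<n$ the vector $\lambda^{(I)}$ can indeed be taken in the closed simplex with $\sum\lambda_i=1$ and not merely $\lambda^{(I)}\neq 0$, but that is exactly how Lemma~\ref{lem:lop} and Corollary~\ref{cor:lp} are phrased, so no extra work is required. I would also remark that Helly applies verbatim to the finitely many closed (in fact polyhedral) sets $Q_l$; there is no compactness issue since they all lie in the compact simplex.
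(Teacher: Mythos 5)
Your proposal is correct and follows essentially the same route as the paper's proof: the sets you call $Q_l$ are exactly the convex polyhedral sets $\textup{Lop}(a_j)$ used there, and the key step is the same application of Helly's theorem in the $(n-1)$-dimensional hyperplane $\sum_i \lambda_i = 1$, with Lemma \ref{lem:lop} supplying the translation in both directions. Nothing is missing; even your dimension-count remark matching the Helly number $n$ to the subset size is the same observation the paper relies on.
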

\begin{proof}
By the above discussion, the only thing that needs proof is the sufficiency of the condition $\rankp(A_I)<n$ for all $I$, since it is clearly implied by $\rankp(A)<n$. Assume that the condition holds. Then, by Lemma \ref{lem:lop}, for each  $A_I$ there exists $\lambda^I \in \mathbb{R}^n_+$ with coordinate sum one, such that for each column $l \in I$, $\{A_{1l}\lambda^{I}_1,\ldots,A_{nl}\lambda^{I}_n\}$ is not lopsided.

Given  any $x \in \mathbb{R}^n_+$, denote by $\textup{Lop}(x)$ the set of $y \in \RR^n_+$ with coordinate sum one such that $\{x_1 y_1,\ldots,x_n y_n\}$ is not lopsided. This is simply the polyhedral set
\begin{align*}
\text{Lop}(x)= \Big\{y \in \mathbb{R}^n_+, \, \sum_{i=1}^{n} y_i = 1: \, x_i y_i \leq \sum_{k\neq i} x_k y_k, \, i=1,\ldots,n \Big\}
\end{align*}
and, in particular, is convex.

Let $a_j$ denote the $j$th column of $A$. The convex sets $\text{Lop}(a_j)$, for $j=1,...,m$, are contained in the hyperplane of coordinate sum one, an $n-1$ dimensional space. Furthermore, by assumption, any $n$ of them intersect, since for any $I=\{i_1,\dots,i_n\}$, we have $\lambda^I \in \bigcap_{j\in I} \text{Lop}(a_j)$. By Helly's Theorem, we must have
$$\bigcap^m_{j=1} \text{Lop}(a_j) \neq \emptyset,$$
which means that we can take $\lambda$ in the intersection, which will then verify the conditions of Lemma \ref{lem:lop}, proving that $\rankp(A)<n$.
\end{proof}

This shows that we can reduce the $n\times m$ case to multiple $n \times n$ cases, so we can still apply Camion-Hoffman's result to study this case.

\begin{example}
Consider the family of $3\times 4$ matrices parametrized by
$$\left[
\begin{array}{cccc}
 x-y+1 & x-y+1 & x+1 & 1 \\
 1-x & -x+y+1 & 1-y & x+y+1 \\
 1-y & 1-x & 1 & x-y+1 \\
\end{array}
\right].$$
If we want to study the region where the phaseless rank is two, it is enough to look at the four $3\times 3$ submatrices and use the result of Example \ref{ex:3x3characterization} to compute the region for each of them, which are shown in Figure \ref{fig:submatrices}. The red pentagonal region is the region where the matrix is nonnegative, while the colored region inside is the region of nonmaximal rank for each of the submatrices.
\begin{figure}[H]
  \centering
    \centerline{\includegraphics[width=0.22\textwidth]{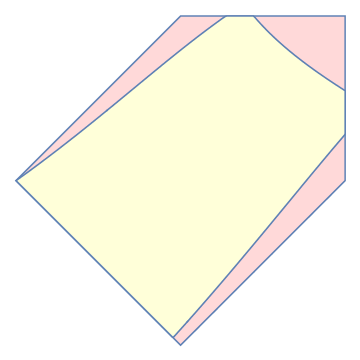} \hfill
    \includegraphics[width=0.22\textwidth]{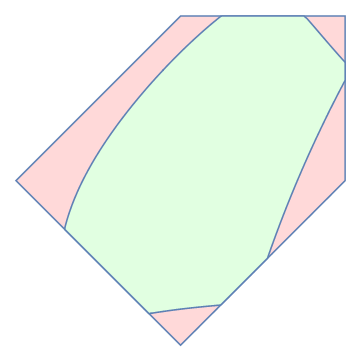}\hfill
    \includegraphics[width=0.22\textwidth]{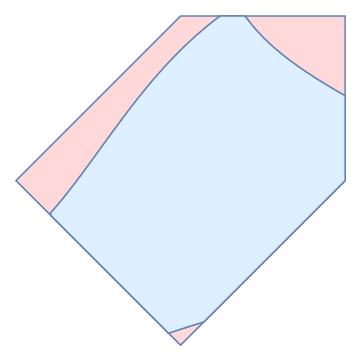}\hfill
    \includegraphics[width=0.22\textwidth]{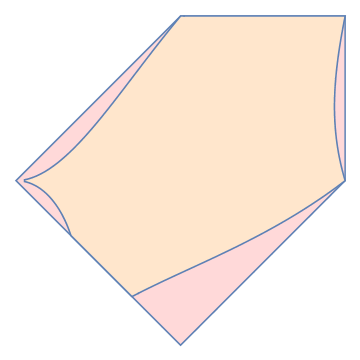}
    }
	\caption{Region of nonmaximal phase rank for each $3 \times 3$ submatrix}
	\label{fig:submatrices}
\end{figure}
By Proposition \ref{prop:rect} we then can simply intersect the four regions to observe the region where the phaseless rank of the full matrix is at most $2$. The result is shown in Figure \ref{fig:fullmatrix}
\begin{figure}[H]
  \centering
    \centerline{\includegraphics[width=0.3\textwidth]{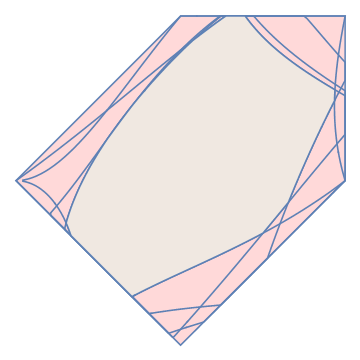}
    }
	\caption{Region of nonmaximal phase rank for the full matrix}
	\label{fig:fullmatrix}
\end{figure}
\end{example}

\subsection{Geometric implications} \label{ssec:geometric}

From Camion-Hoffman's Theorem and Proposition \ref{prop:rect} one can also derive results on the geometry of the sets $P_{n-1}^{n \times m}$, of the $n\times m$ matrices of nonmaximal phaseless rank. More precisely, we are interested in the semialgebraic descriptions of such sets, and their boundaries.

Recall that $P_k^{n \times m}$ is always semialgebraic by the Tarski-Seidenberg principle, since it is the projection of a semialgebraic set. However the description can in principle be very complicated. For this special case, Theorem \ref{thm:cam_hof} together with Proposition \ref{prop:mmatrix} give a concrete semialgebraic description of $P_{n-1}^{n \times n}$.
Recall that Theorem \ref{thm:cam_hof} states that
$$P^{n\times n}_{n-1}=\bigcap_{P \in S_n} \{ A \in \RR_+^{n\times n} : \mathcal{M}({AP}) \textrm{ is not a nonsigular } M\textrm{-matrix}\}.$$
Let $\det_i(X)$ denote the $i$-th leading principal minor of matrix $X$. The characterizations of $M$-matrices given in Proposition \ref{prop:mmatrix} then allow us to write this more concretely as
$$P^{n\times n}_{n-1}=\bigcap_{P \in S_n} \bigcup_{i=3}^n \{ A \in \RR_+^{n\times n} : \textup{det}_i(\mathcal{M}({AP})) \leq 0 \},$$
which is a closed semialgebraic set, but not necessarily basic. For the $n \times m$ case, we just have to intersect the sets corresponding to each of the $n\times n$ submatrices, so we can still write $P^{n\times m}_{n-1}$ explicitly as an intersection of unions of sets described by a single polynomial inequality.

Note that when $n=3$ the unions have a single element, which trivially gives us the following corollary.
\begin{corollary}\label{cor:3by3}
The set $P^{3 \times m}_2$ is a basic closed semialgebraic set, for $m\geq 3$.
\end{corollary}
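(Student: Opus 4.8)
The plan is to specialize the semialgebraic description of $P^{n\times n}_{n-1}$ derived just above to $n=3$, where the inner unions collapse, and then to transport the resulting basic closed description to rectangular matrices via Proposition \ref{prop:rect}.

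First I would take $n=3$ in
$$P^{n\times n}_{n-1}=\bigcap_{P \in S_n} \bigcup_{i=3}^n \{ A \in \RR_+^{n\times n} : \textup{det}_i(\mathcal{M}({AP})) \leq 0 \}.$$
Since the index range $\{3,\dots,n\}$ is now the singleton $\{3\}$ and the third leading principal minor of a $3\times 3$ matrix is its determinant, the inner union disappears and we obtain
$$P^{3\times 3}_{2}=\bigcap_{P \in S_3} \{ A \in \RR_+^{3\times 3} : \det(\mathcal{M}({AP})) \leq 0 \}.$$
The cone $\RR^{3\times 3}_+$ is cut out by the nine linear inequalities $A_{ij}\ge 0$, and each of the six functions $A\mapsto \det(\mathcal{M}(AP))$, $P\in S_3$, is a single cubic polynomial in the entries of $A$ (these are exactly the six polynomials of Example \ref{ex:3x3characterization}). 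Hence $P^{3\times 3}_2$ is the solution set of these fifteen simultaneous polynomial inequalities, i.e. a basic closed semialgebraic set.

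For the general case $m>3$, I would apply Proposition \ref{prop:rect}: a nonnegative matrix $A\in\RR^{3\times m}_+$ lies in $P^{3\times m}_2$ if and only if every $3\times 3$ column submatrix $A_I$, with $I\subseteq\{1,\dots,m\}$ and $|I|=3$, lies in $P^{3\times 3}_2$. Each extraction map $A\mapsto A_I$ is linear, so composing it with the fifteen defining inequalities of $P^{3\times 3}_2$ and letting $I$ range over the $\binom{m}{3}$ index sets yields a finite family of polynomial inequalities in the entries of $A$ whose common solution set is precisely $P^{3\times m}_2$; therefore $P^{3\times m}_2$ is basic closed semialgebraic.

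I do not expect a real obstacle: the substance is entirely contained in Theorem \ref{thm:cam_hof}, Proposition \ref{prop:mmatrix} and Proposition \ref{prop:rect}, which may be assumed. The only point that deserves a word of care is why the union $\bigcup_{i=3}^n$ collapses to the single inequality $\det(\mathcal{M}(AP))\le 0$ when $n=3$, i.e. why the ``positive diagonal'' clause of characterization (v) in Proposition \ref{prop:mmatrix} contributes nothing. This is because $\det(\mathcal{M}(AP))$ expands with the diagonal product $\prod_i |(AP)_{ii}|$ as its only possibly positive term and all other terms nonpositive, so a vanishing diagonal entry already forces $\det(\mathcal{M}(AP))\le 0$; this is precisely the reduction carried out in the paragraph preceding the corollary. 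It is worth noting that this collapse is special to $n=3$: for $n\ge 4$ the unions over $i$ are genuine, and $P^{n\times n}_{n-1}$ is in general only a closed semialgebraic set, not a basic one.
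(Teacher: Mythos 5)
Your proof is correct and follows essentially the same route as the paper: specialize the description $P^{n\times n}_{n-1}=\bigcap_{P}\bigcup_{i=3}^{n}\{\textup{det}_i(\mathcal{M}(AP))\leq 0\}$ to $n=3$, where the union is a singleton, and then pass to $3\times m$ via Proposition \ref{prop:rect} by intersecting over all $3\times 3$ column submatrices; your extra justification that a vanishing diagonal entry already forces $\det(\mathcal{M}(AP))\le 0$ is a correct (and welcome) elaboration of a point the paper leaves implicit. Only your closing aside overstates the situation for larger $n$: the paper's Corollary \ref{cor:34} shows that $P^{4\times m}_{3}$ is still basic closed, so non-basicness is not established for $n\ge 4$, merely that this particular determinantal description fails from $n=5$ on.
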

It is generally not true that we can ignore the size $3$ minor when testing a matrix for the property of being a nonsingular $M$-matrix. However, in our particular application we can get a little more in this direction.

\begin{corollary}\label{cor:34}
For any $A \in \mathbb{R}^{4\times 4}_{+}$, we have $\rankp(A)<4$ if and only if $\det(\mathcal{M}(AP))\leq 0$  for all permutation matrices $P \in S_4$. In particular, $P^{4 \times m}_3$ is a basic closed semialgebraic set for all $m \geq 4$.
\end{corollary}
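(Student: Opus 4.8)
The plan is to reduce the rectangular statement to the square one and then prove the $4\times 4$ equivalence. For the reduction, Proposition \ref{prop:rect} gives that $\rankp(A)<4$ iff $\rankp(A_I)<4$ for every column set $I$ with $|I|=4$; granting the $4\times4$ equivalence, each of these is in turn equivalent to $\det(\mathcal M(A_IP))\le 0$ for all $P\in S_4$. Since each $\det(\mathcal M(A_IP))$ is a polynomial in the entries of $A$, this exhibits $P^{4\times m}_3$ as the set of nonnegative matrices satisfying a finite system of polynomial inequalities, i.e.\ as a basic closed semialgebraic set, so the ``in particular'' follows at once. For the easy direction of the $4\times4$ equivalence, if $\det(\mathcal M(AP))\le 0$ for every $P$ then the $4$-th leading principal minor of each $\mathcal M(AP)$ is nonpositive, so by Proposition \ref{prop:mmatrix}(iv) no $\mathcal M(AP)$ is a nonsingular $M$-matrix, and Theorem \ref{thm:cam_hof} gives $\rankp(A)<4$.

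For the converse, assume $\rankp(A)<4$; since $\rankp(AP)=\rankp(A)<4$ for every permutation $P$, it is enough to show $\det(\mathcal M(A))\le 0$. By Lemma \ref{lem:lop} there is $\lambda\in\RR^4_+$ with $\sum_i\lambda_i=1$ such that every column of $\widetilde A:=\operatorname{diag}(\lambda)A$ is not lopsided. In the main case $\lambda>0$ one has $\det(\mathcal M(\widetilde A))=(\prod_i\lambda_i)\det(\mathcal M(A))$, of the same sign as $\det(\mathcal M(A))$, so everything reduces to the core claim: a nonnegative $4\times4$ matrix $B$ all of whose columns are not lopsided satisfies $\det(\mathcal M(B))\le 0$.

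To prove this I would use that, expanding over $S_4$, one has $\det(\mathcal M(B))=\prod_iB_{ii}+\sum B_{kl}B_{lk}B_{mn}B_{nm}-N$, where the middle sum ranges over the three partitions of $\{1,2,3,4\}$ into two pairs $\{k,l\},\{m,n\}$ and $N\ge 0$ collects the remaining $20$ terms: indeed the coefficient of a permutation $\sigma$ is $(-1)$ raised to the number of nontrivial cycles of $\sigma$, so only the identity and the three fixed-point-free involutions contribute positively. Since column $j$ is not lopsided, there are unit-modulus $u_{ij}$ with $\sum_iB_{ij}u_{ij}=0$, and after rotating columns we may take $u_{jj}=1$, so $\widehat B:=(B_{ij}u_{ij})$ has zero row sum, whence $\det\widehat B=0$. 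A term-by-term comparison, using $\widehat B_{ii}=B_{ii}\ge 0$ and $\operatorname{Re}z\le|z|$, shows $\operatorname{Re}(\mathrm{term}_\sigma(\widehat B))\ge\mathrm{term}_\sigma(\mathcal M(B))$ for the identity and for every $\sigma$ with a nontrivial cycle; the only obstruction is the three positive double-transposition terms, which are handled as soon as one arranges $u_{kl}u_{lk}u_{mn}u_{nm}\in\RR_{\ge 0}$ for the three partitions. These are three phase conditions, and the polygon-closure constraint leaves each column a one-parameter family of admissible phase choices, so there is enough freedom to meet all three; then $\det(\mathcal M(B))\le\operatorname{Re}(\det\widehat B)=0$. (Alternatively, in the symmetric case, one can write $\mathcal M(B)=L-\operatorname{diag}(r)$ with $L$ the weighted Laplacian of $B$ and $r\ge 0$ the vector of non-lopsidedness slacks, and deduce from $L\succeq 0$ and $L\mathbf 1=0$ that $\mathcal M(B)$ has a nonpositive eigenvalue and cannot have exactly two negative ones.)

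Finally, the degenerate case $\lambda_i=0$ must be treated separately: after conjugating by a permutation — which alters neither $\rankp$ nor $\det\mathcal M$ — say $\lambda_4=0$; then $\lambda$ witnesses $\rankp(A[\{1,2,3\},\,\cdot\,])<3$ (iterating if further coordinates vanish), so by Proposition \ref{prop:rect} and Theorem \ref{thm:cam_hof} the $\mathcal M$-determinants of the relevant $3\times3$ submatrices are $\le 0$, and one concludes by expanding $\det(\mathcal M(A))$ along the fourth row, using $\mathcal M(A)_{4j}\le 0$ for $j\ne 4$ and matching cofactors to those determinants — the sign matching here needs some care. I expect the core claim, and in particular controlling the three positive double-transposition terms, to be the main obstacle; this is precisely the point that is special to $4\times4$ and fails for $n\ge 5$.
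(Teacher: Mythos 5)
Your easy direction and the reduction of the rectangular case via Proposition \ref{prop:rect} are fine and agree with the paper, but the converse — the heart of the corollary — has a genuine gap at its central step. After reducing to the core claim (a nonnegative $4\times 4$ matrix $B$ with non-lopsided columns has $\det(\mathcal M(B))\le 0$), your term-by-term comparison with $\operatorname{Re}\det\widehat B$ requires the three double-transposition phase products $u_{kl}u_{lk}u_{mn}u_{nm}$ to equal $1$ (being unit modulus, ``$\ge$'' forces equality) \emph{in addition} to the per-column closure constraints and $u_{jj}=1$. You justify feasibility only by counting parameters (four one-parameter families of column phases against three conditions), and a dimension count does not give surjectivity of the relevant map onto $(S^1)^3$; moreover the count itself breaks down for degenerate columns (a column with two nonzero entries has a rigid, finite set of admissible phases), so the argument as written does not establish the core claim. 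You flag this yourself as ``the main obstacle,'' and it is exactly the part that remains unproved. The degenerate case $\lambda_i=0$ is also not closed: the $3\times 3$ minors appearing in the cofactor expansion of $\det(\mathcal M(A))$ along the fourth row are \emph{not} comparison matrices of $3\times 3$ submatrices (their sign patterns have two, not three, positive entries), so the ``matching cofactors'' step does not go through as stated. (Minor: the closure condition $\sum_i B_{ij}u_{ij}=0$ gives zero \emph{column} sums, not row sums, though $\det\widehat B=0$ still follows.)

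For contrast, the paper avoids phase constructions entirely and argues in the contrapositive: if $\det(\mathcal M(AP))>0$ for some $P$, take the three permutations $P_1,P_2,P_3$ obtained from $P$ by swapping the columns within the two pairs of a pairing; writing $b_{ij}$ for the entries of $AP$, one has the identity
$$\det(\mathcal M(AP))=b_{41}\det\nolimits_3(\mathcal M(AP_3))+b_{42}\det\nolimits_3(\mathcal M(AP_2))+b_{43}\det\nolimits_3(\mathcal M(AP_1))+b_{44}\det\nolimits_3(\mathcal M(AP)),$$
and all four matrices share the same ($4\times4$) determinant. Positivity of the left-hand side and nonnegativity of the $b_{4j}$ force some $3\times 3$ leading principal minor to be positive, and then characterization (v) of Proposition \ref{prop:mmatrix} (only leading minors of size at least $3$ are needed) makes the corresponding $\mathcal M(AP_i)$ a nonsingular $M$-matrix, so Camion--Hoffman gives $\rankp(A)=4$. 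If you want to salvage your route, you would need an actual proof that the three phase conditions can always be met (or a replacement for the term-by-term bound); as it stands, the key claim special to $n=4$ is asserted, not proved.
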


\begin{proof}

By Theorem \ref{thm:cam_hof}, $\rankp(A)=4$ if and only if, for some $P$, $\mathcal{M}(AP)$ is a nonsingular M-matrix, which implies, by Proposition \ref{prop:mmatrix}, that all its leading principal minors are positive, including its determinant.
This shows that if $\det(\mathcal{M}(AP)) \leq 0$ for all permutation matrices $P$ then  $\rankp(A)<4$.

Suppose now that $\det(\mathcal{M}(AP))>0$, for some $P$. We have to show that that this implies $\rankp(A)=4$. There exist three different permutation matrices $P_1$, $P_2$ and $P_3$, distinct from $P$ such that $$\det(\mathcal{M}(AP_1))=\det(\mathcal{M}(AP_2))=\det(\mathcal{M}(AP_3))=\det(\mathcal{M}(AP))>0.$$
Namely, $P_1$, $P_2$ and $P_3$ are obtained from $P$ by partitioning its columns in two pairs and transposing the columns in each pair. If we denote the entries of $AP$ by $b_{ij}$, $i,j \in \{1,2,3,4\}$, we get the four matrices
$\mathcal{M}(AP), \mathcal{M}(AP_1), \mathcal{M}(AP_2)$ and $\mathcal{M}(AP_3)$
as presented below in order:
$$\left[
\begin{array}{cccc}
 b_{11} & -b_{12} & -b_{13} & -b_{14} \\
 -b_{21} & b_{22} & -b_{23} & -b_{24} \\
 -b_{31} & -b_{32} & b_{33} & -b_{34} \\
 -b_{41} & -b_{42} & -b_{43} & b_{44} \\
\end{array}
\right], \ \ \ \left[
\begin{array}{cccc}
 b_{12} & -b_{11} & -b_{14} & -b_{13} \\
 -b_{22} & b_{21} & -b_{24} & -b_{23} \\
 -b_{32} & -b_{31} & b_{34} & -b_{33} \\
 -b_{42} & -b_{41} & -b_{44} & b_{43} \\
\end{array}
\right],$$
$$
\left[
\begin{array}{cccc}
 b_{13} & -b_{14} & -b_{11} & -b_{12} \\
 -b_{23} & b_{24} & -b_{21} & -b_{22} \\
 -b_{33} & -b_{34} & b_{31} & -b_{32} \\
 -b_{43} & -b_{44} & -b_{41} & b_{42} \\
\end{array}
\right], \ \ \
\left[
\begin{array}{cccc}
 b_{14} & -b_{13} & -b_{12} & -b_{11} \\
 -b_{24} & b_{23} & -b_{22} & -b_{21} \\
 -b_{34} & -b_{33} & b_{32} & -b_{31} \\
 -b_{44} & -b_{43} & -b_{42} & b_{41} \\
\end{array}
\right].$$
One can now easily check that $\det(\mathcal{M}(AP))$ can be written as
$$b_{41}\textup{det}_3(\mathcal{M}(AP_3))+b_{42}\textup{det}_3(\mathcal{M}(AP_2))+b_{43}\textup{det}_3(\mathcal{M}(AP_1))+b_{44}\textup{det}_3(\mathcal{M}(AP)),$$
which, since all $b_{ij}$ are nonnegative, means that at least one of the size
$3$ leading principal minors must be positive. By Proposition \ref{prop:mmatrix}, the corresponding matrix must be a nonsingular $M$-matrix, since it has both the $3\times 3$ and the $4 \times 4$ leading principal minors positive.

This shows that if $\det(\mathcal{M}(AP))>0$ for some permutation matrix, then Camion-Hoffman's Theorem guarantees that $\rankp(A) = 4$, completing the proof.
\end{proof}

\begin{remark}
One can extract a little more information from the proof of Corollary \ref{cor:34}.
For checking whether a $4 \times 4$ nonnegative matrix $A$ has phaseless rank less than four, we just need to check $\det \mathcal{M}(AP) \leq 0$ for all permutation matrices $P$. In addition, we also know that each determinant is obtained from four different permutation matrices, leaving only six polynomial inequalities to check.

More concretely, if $A$ has entries $a_{ij}$, and $\textup{perm}(A)$ denotes the permanent of $A$, we just have to consider the inequalities:
$$2 \left(a_{12} a_{23} a_{34} a_{41}+a_{11} a_{24} a_{33} a_{42}+a_{14} a_{21} a_{32} a_{43}+a_{13} a_{22} a_{31} a_{44}\right)-\textup{perm}(A) \leq 0,$$
$$2 \left(a_{13} a_{22} a_{34} a_{41}+a_{14} a_{21} a_{33} a_{42}+a_{11} a_{24} a_{32} a_{43}+a_{12} a_{23} a_{31} a_{44}\right)-\textup{perm}(A) \leq 0,$$
$$2 \left(a_{12} a_{24} a_{33} a_{41}+a_{11} a_{23} a_{34} a_{42}+a_{14} a_{22} a_{31} a_{43}+a_{13} a_{21} a_{32} a_{44}\right)-\textup{perm}(A) \leq 0,$$
   $$2 \left(a_{14} a_{22} a_{33} a_{41}+a_{13} a_{21} a_{34} a_{42}+a_{12} a_{24} a_{31} a_{43}+a_{11} a_{23} a_{32} a_{44}\right)-\textup{perm}(A) \leq 0,$$
      $$2 \left(a_{13} a_{24} a_{32} a_{41}+a_{14} a_{23} a_{31} a_{42}+a_{11} a_{22} a_{34} a_{43}+a_{12} a_{21} a_{33} a_{44}\right)-\textup{perm}(A) \leq 0,$$
         $$2 \left(a_{14} a_{23} a_{32} a_{41}+a_{13} a_{24}a_{31} a_{42}+a_{12} a_{21} a_{34} a_{43}+a_{11} a_{22} a_{33} a_{44}\right)-\textup{perm}(A) \leq 0.$$
\end{remark}

Unfortunately, Corollary \ref{cor:34} does not extend beyond $n=4$. From $n=5$ onwards, the condition that $\det(\mathcal{M}(AP))\leq 0$ for all permutation matrices is stronger than having phaseless rank less than $n$, as shown in the next example.

\begin{example}\label{ex:5counterexample}
Consider the matrices $$A=\begin{bmatrix}
7 & 4 & 9 & 10 & 0\\
9 & 2 & 3 & 0 & 3\\
3 & 10 & 6 & 4 & 8\\
0 & 4 & 1 & 6 & 4\\
0 & 3 & 3 & 10 & 2
\end{bmatrix} \text{ and } P=\begin{bmatrix}
1 & 0 & 0 & 0 & 0\\
0 & 0 & 0 & 1 & 0\\
0 & 1 & 0 & 0 & 0\\
0 & 0 & 1 & 0 & 0\\
0 & 0 & 0 & 0 & 1
\end{bmatrix}.$$

We have that $\rankp(A)<5$, by Lemma \ref{lem:lop}, since no column is lopsided. However, $\det(\mathcal{M}(AP)) = 3732 > 0$, so it does not verify the determinant inequalities for all permutations matrices.
\end{example}

We now turn our attention to the boundary of the set $P^{n \times n}_{n-1}$, which we will denote by $\partial P^{n \times n}_{n-1}$. For $n \leq 4$, the explicit description we got in Corollary \ref{cor:3by3} and Corollary \ref{cor:34} immediately guarantees us that the positive part of the boundary is contained in the set of matrices $A$ such that $\det(\mathcal{M}(AP))=0$ for some permutation matrix $P$. In particular this tells us that $\partial P^{n \times n}_{n-1} \cap \RR_{++}^{n \times n} 	\subseteq S^{n \times n}_{n-1}$, for $n \leq 4$, the set of signless rank deficient matrices since $\det(\mathcal{M}(AP))=0$ implies
$\det(\mathcal{M}(AP)P^{-1})=0$ and $\mathcal{M}(AP)P^{-1}$ is simply $A$ with the signs of some entries switched. What is less clear is that exactly the same is still true for all $n$.
\begin{proposition}\label{prop:boundary}
If $A \in \partial P^{n\times n}_{n-1} \cap \mathbb{R}^{n\times n}_{++}$, then $\det(\mathcal{M}(AP))=0$ for some permutation matrix $P$.
\end{proposition}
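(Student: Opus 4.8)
The plan is to approach $A$ from \emph{outside} the set $P^{n\times n}_{n-1}$ and push Camion--Hoffman's Theorem through a limit. Since $P^{n\times n}_{n-1}$ is closed (by the semialgebraic description given above) and $A$ lies on its boundary, $A$ itself belongs to $P^{n\times n}_{n-1}$, so by Theorem~\ref{thm:cam_hof} none of the comparison matrices $\mathcal{M}(AP)$, $P\in S_n$, is a nonsingular $M$-matrix. On the other hand, because $A\in\mathbb{R}^{n\times n}_{++}$ is a boundary point, a whole neighbourhood of $A$ consists of positive matrices and must contain matrices outside $P^{n\times n}_{n-1}$; hence there is a sequence of (positive, in particular nonnegative) matrices $A^{(k)}\to A$ with $\rankp(A^{(k)})=n$. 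By Theorem~\ref{thm:cam_hof} again, for each $k$ there is a permutation matrix $P_k$ with $\mathcal{M}(A^{(k)}P_k)$ a nonsingular $M$-matrix. Since $S_n$ is finite, some fixed $Q$ occurs as $P_k$ for infinitely many $k$; passing to that subsequence we may assume $\mathcal{M}(A^{(k)}Q)$ is a nonsingular $M$-matrix for every $k$, while $\mathcal{M}(A^{(k)}Q)\to\mathcal{M}(AQ)$.

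The second ingredient is the classical characterisation (see, e.g., \cite{doi:10.1137/1.9781611971262}) that a nonsingular $M$-matrix has positive determinant and entrywise nonnegative inverse, and, conversely, that any $Z$-matrix (one with all off-diagonal entries nonpositive) that is invertible with nonnegative inverse is a nonsingular $M$-matrix. Applying the first half to each $\mathcal{M}(A^{(k)}Q)$ gives $\det\mathcal{M}(A^{(k)}Q)>0$ and $\mathcal{M}(A^{(k)}Q)^{-1}\geq 0$, and letting $k\to\infty$ yields $\det\mathcal{M}(AQ)\geq 0$. I would then conclude by contradiction: suppose $\det\mathcal{M}(AQ)\neq 0$, hence $>0$. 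Then $\mathcal{M}(AQ)$ is invertible, so $\mathcal{M}(AQ)^{-1}=\lim_k\mathcal{M}(A^{(k)}Q)^{-1}$ is a limit of nonnegative matrices and therefore nonnegative. Since $\mathcal{M}(AQ)$ is a $Z$-matrix, the converse characterisation forces it to be a nonsingular $M$-matrix, contradicting $A\in P^{n\times n}_{n-1}$ via Theorem~\ref{thm:cam_hof}. Hence $\det\mathcal{M}(AQ)=0$, which is precisely the statement with $P=Q$.

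The routine parts are the pigeonhole over $S_n$ and the continuity claims: $\det\mathcal{M}(XQ)$ depends polynomially on the nonnegative matrix $X$, and inversion is continuous on the set of invertible matrices, so $\det\mathcal{M}(A^{(k)}Q)$ converges to $\det\mathcal{M}(AQ)$ and, in the branch where this limit is positive, $\mathcal{M}(A^{(k)}Q)^{-1}$ converges to $\mathcal{M}(AQ)^{-1}$. The point needing care --- and the conceptual heart of the argument --- is the final contradiction: on its own, $\det\mathcal{M}(AQ)\geq 0$ is entirely consistent with $A\in P^{n\times n}_{n-1}$, and what rules out $\det\mathcal{M}(AQ)>0$ is the extra information, surviving the limit, that the inverses stay nonnegative, together with the $Z$-matrix shape of a comparison matrix. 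Thus the key is to transport the nonnegativity of the inverses (not merely the positivity of the determinants) through the limit; the hypothesis $A\in\mathbb{R}^{n\times n}_{++}$ enters only to guarantee that the approximating sequence can be chosen among matrices for which $\rankp$ is even defined.
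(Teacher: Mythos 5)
Your proof is correct, and its skeleton is the same as the paper's: use closedness of $P^{n\times n}_{n-1}$ to place $A$ inside the set, approximate $A$ from outside by positive matrices of maximal phaseless rank, invoke Theorem~\ref{thm:cam_hof} for each approximant, pigeonhole over the finitely many permutations, and pass to the limit to conclude that $\mathcal{M}(AQ)$ is a singular comparison matrix. Where you genuinely diverge is in how the limit is closed. The paper uses the eigenvalue form of the $M$-matrix definition: each $\mathcal{M}(A^{(k)}Q)$ has all eigenvalues with positive real part, so by continuity of eigenvalues $\mathcal{M}(AQ)$ has all eigenvalues with nonnegative real part, i.e.\ it is an $M$-matrix; since it cannot be a nonsingular $M$-matrix (else $\rankp(A)=n$), it is singular. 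You instead transport two pieces of data through the limit --- $\det\mathcal{M}(A^{(k)}Q)>0$ and $\mathcal{M}(A^{(k)}Q)^{-1}\geq 0$ --- and rule out $\det\mathcal{M}(AQ)>0$ via the classical inverse-nonnegativity characterisation of nonsingular $M$-matrices among $Z$-matrices. That characterisation is not among the items listed in Proposition~\ref{prop:mmatrix}, but it is standard and is in the cited reference \cite{doi:10.1137/1.9781611971262}, so the step is legitimate; you correctly identify that $\det\mathcal{M}(AQ)\geq 0$ alone would not suffice and that the surviving nonnegativity of the inverses is what does the work. The paper's closing step is slightly more economical (it uses only the definition of $M$-matrix already given and continuity of the spectrum), while yours avoids eigenvalue continuity at the cost of importing one extra, but classical, equivalence; both uses of the positivity hypothesis on $A$ (to ensure the outside approximants are nonnegative so Camion--Hoffman applies) coincide.
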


\begin{proof}
Suppose $A \in \partial P^{n\times n}_{n-1} \cap \mathbb{R}^{n\times n}_{++}$. Since $P^{n\times n}_{n-1}$ is closed, $\rankp(A)<n$ and there must exist a sequence $A_k$ of matrices such that $A_k \rightarrow A$ and every $A_k$ is nonnegative and has phaseless rank $n$.

By Camion-Hoffman's result this implies that for every $k$ we can find a permutation matrix  $P_k \in S_n$ such that $\mathcal{M}(A_kP_k)$ is a nonsingular $M$-matrix or, equivalently, such that all eigenvalues of $\mathcal{M}(A_kP_k)$ have positive real part. Note that since there is a finite number of permutations, there exists a permutation matrix $P$ such that $P_{k_i}=P$ for an infinite subsequence $A_{k_i}$, and that $\mathcal{M}(A_{k_i}P)$ have all eigenvalues with positive real part.

Since eigenvalues vary continuously, and $\mathcal{M}(A_{k_i}P) \rightarrow \mathcal{M}(AP)$, we must have that all eigenvalues of $\mathcal{M}(AP)$ have nonnegative real part, so $\mathcal{M}(AP)$ is an $M$-matrix. It cannot be a nonsingular  $M$-matrix, as that would imply that $\rankp(A)=n$. Therefore, $\mathcal{M}(AP)$ must be singular, i.e., $\det(\mathcal{M}(AP))=0$, as intended.
\end{proof}

So, in spite of needing the smaller leading principal minors to fully describe the region, the boundary of $P^{n \times n}_{n-1}$ will still be contained in the set cut out by the determinants of the comparison matrices of the permutations of the matrices, even for $n>4$. In the next example we try to illustrate what is happening.

\begin{example}
Consider the slice of the nonnegative matrices in $\RR_+^{5 \times 5}$ that contains the identity, the all-ones matrix and the matrix in Example \ref{ex:5counterexample}, all scaled to have row sums $1$. By what we saw in Example \ref{ex:5counterexample}, we know that in this slice the set of nonnegative matrices, the set of matrices of phaseless rank less than $5$ and the set of matrices $A$ verifying $\mathcal{M}(AP) \leq 0$ for all $P$ are all distinct. This can be seen in the first image of Figure \ref{fig:5slice}, where we see the sets in light blue, green and yellow, respectively, and the three special matrices mentioned as black dots.
\begin{figure}[H]
  \centering
    \centerline{\includegraphics[width=0.47\textwidth]{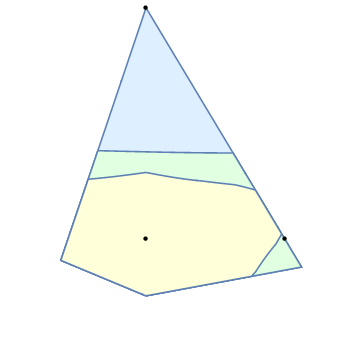} \hfill \includegraphics[width=0.47\textwidth]{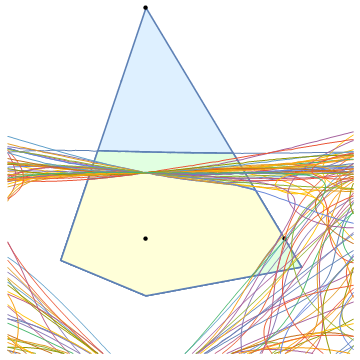}}
	\caption{A slice of the cone of $5 \times 5$ nonnegative matrices, with the nonmaximal phaseless rank region and its basic closed semialgebraic inner approximation highlighted}
	\label{fig:5slice}
\end{figure}
\end{example}
In the second image of the same figure we can see the zero sets of the $120$ different determinants of the form $\det(\mathcal{M}(AP))$ and check that the extra positive boundary points of $P^{5\times 5}_4$ do indeed come from one of them.
\subsection{Upper bounds}

In Proposition \ref{prop:rect} we have shown that for an $n \times m$ matrix, with $n \leq m$, to have phaseless rank less than $n$ it was enough to check all its $n \times n$ submatrices. A natural question is to ask if a matrix has phaseless rank less than $k$ if and only if the same is true for all its $k \times k$ submatrices, for any positive integer $k$. This is false, as was shown by Levinger (\cite{levinger1972generalization}).
\begin{theorem}[\cite{levinger1972generalization}]
Let $A = mI_n + J_n$, where $m$ is an integer with $1 \leq m < n-2$, and $I_n$ and $J_n$ are, respectively, the $n \times n$ identity and all-ones matrices. Then,
$\rankp(A) \geq m+2$.
\end{theorem}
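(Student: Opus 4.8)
The plan is to assume, for contradiction, that some $B \in \Omega(mI_n+J_n)$ has $\rank(B)\le m+1$, and to show that the modulus constraints then pin $B$ down to a matrix which demonstrably has rank $n$. Write $A=mI_n+J_n$. Since the entries of $B$ of modulus $m+1$ occupy exactly the diagonal, every principal $(m+2)\times(m+2)$ submatrix $B_S$ (with $S\subseteq\{1,\dots,n\}$, $|S|=m+2$) is equimodular with $mI_{m+2}+J_{m+2}$ and, being a submatrix of a rank $\le m+1$ matrix, is singular. The key point is that the bare fact ``$\rank(B_S)<m+2$'' carries no information — one always has $\rankp(mI_{m+2}+J_{m+2})<m+2$, e.g.\ by applying Lemma~\ref{lem:lop} with the uniform weight vector — so one must instead extract the precise \emph{shape} of the matrices equimodular with $mI_{m+2}+J_{m+2}$ that are singular.

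\emph{Step 1 (rigidity in the critical square size).} I would first prove: if $B'$ is equimodular with $mI_{m+2}+J_{m+2}$ and $\rank(B')<m+2$, then $B'=D_1\,(J_{m+2}-(m+2)I_{m+2})\,D_2$ for invertible diagonal matrices $D_1,D_2$; in particular $\rank(B')=m+1$. The feasibility problem supplied by Lemma~\ref{lem:lop} for $mI_{m+2}+J_{m+2}$ asks for $\lambda\ge 0$ with $\sum_i\lambda_i=1$ and each column nonlopsided; a short computation shows that the condition coming from column $l$ is exactly $\lambda_l\le\tfrac{1}{m+2}$, so the feasible set collapses to the single point $\lambda=\tfrac{1}{m+2}(1,\dots,1)$. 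Hence for any $z\in\ker\big((B')^{T}\big)\setminus\{0\}$ all $|z_k|$ are equal, and in each identity $\sum_k B'_{kl}z_k=0$ the summand $B'_{ll}z_l$ (of modulus $(m+1)|z_1|$) is the negative of a sum of $m+1$ summands each of modulus $|z_1|$; the equality case of the triangle inequality forces $B'_{kl}z_k$ to be one and the same constant $c_l$ for all $k\ne l$, with $B'_{ll}z_l=-(m+1)c_l$. Reading off the entries gives the claimed factorization.

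\emph{Step 2 (gluing the submatrices).} Let $C$ be the matrix obtained from $B$ by replacing each diagonal entry $B_{kk}$ with $-B_{kk}/(m+1)$, leaving the off-diagonal entries unchanged. By Step~1, $C_S$ is rank one for every $(m+2)$-subset $S$, so every $2\times 2$ minor of $C_S$ vanishes. Since any three indices lie in a common $(m+2)$-subset, for all $i\ne k$ and every $j$ (including $j\in\{i,k\}$) we obtain $C_{ij}C_{ki}=C_{ii}C_{kj}$; as $|C_{kk}|=1\ne 0$, this says that row $k$ of $C$ equals $\tfrac{C_{ki}}{C_{ii}}$ times row $i$, for every pair $i\ne k$. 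Thus $\rank(C)=1$, say $C=D_1J_nD_2$ with $D_1,D_2$ invertible diagonal, and undoing the diagonal correction yields $B=D_1\,(J_n-(m+2)I_n)\,D_2$.

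\emph{Step 3 (conclusion).} Then $\rank(B)=\rank\big(J_n-(m+2)I_n\big)$, whose eigenvalues are $n-(m+2)$ (nonzero, since $n>m+2$) and $-(m+2)$ with multiplicity $n-1$ (nonzero, since $m\ge 1$); hence $\rank(B)=n>m+1$, contradicting $\rank(B)\le m+1$. Therefore $\rank(B)\ge m+2$ for every $B\in\Omega(A)$, i.e.\ $\rankp(A)\ge m+2$. The step I expect to be the crux is Step~1: checking that the polyhedron of Lemma~\ref{lem:lop} degenerates to a single point, and then extracting the exact rigid factorization from the equality case of the triangle inequality. Once that is secured, Steps~2 and~3 are essentially bookkeeping; I would also be careful to phrase the minor identities in Step~2 so that they only ever involve three indices at a time, which keeps the argument uniform in $m$ and in particular covers the boundary case $m=1$, where the submatrices $B_S$ are merely $3\times 3$.
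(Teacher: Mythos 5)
Your argument is correct, but note that the paper itself does not prove this statement: it is quoted from Levinger \cite{levinger1972generalization}, whose approach is to derive a general lower bound on $\rankp$ and specialize it to $mI_n+J_n$, whereas you give a direct rigidity argument tailored to this matrix. Your route works: the nonlopsidedness system of Lemma \ref{lem:lop} for $mI_{m+2}+J_{m+2}$ does collapse to the single point $\lambda=\frac{1}{m+2}(1,\dots,1)$ (one small imprecision: column $l$ imposes not only $(m+2)\lambda_l\le 1$ but also $2\lambda_k\le 1+m\lambda_l$ for $k\ne l$; the latter constraints are, however, implied once all coordinates are at most $\frac{1}{m+2}$, so the feasible set is indeed that single point), and since any left-kernel vector $z$ of a singular equimodular $B'$ has $|z|$ proportional to a feasible $\lambda$, all $|z_k|$ are equal and nonzero, after which the equality case of the triangle inequality forces the factorization $B'=D_1(J_{m+2}-(m+2)I_{m+2})D_2$ exactly as you say. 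The gluing step is sound because every $2\times 2$ minor identity you invoke involves at most three indices, which fit inside a common principal $(m+2)\times(m+2)$ block since $m+2\ge 3$ and $n>m+2$ (the divisor in the row-proportionality statement should be $C_{ii}$ rather than $C_{kk}$, but both have modulus one, so this is harmless), and the final computation $\rank\bigl(J_n-(m+2)I_n\bigr)=n>m+1$ uses precisely the hypothesis $m<n-2$. As a bonus, your proof shows the stronger rigidity fact that any low-rank member of $\Omega(mI_{m+2}+J_{m+2})$ is diagonally equivalent to $J_{m+2}-(m+2)I_{m+2}$, in the same spirit as the uniqueness remark in Example \ref{ex:gapphaseless}.
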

Note that it is not hard to see that all $(m+2) \times (m+2)$ matrices of the matrix $A$ constructed above have phaseless rank at most $m+1$, so this is indeed a counterexample.

So a perfect generalization of Proposition \ref{prop:rect} is impossible, but we can try to settle for a weaker goal: discovering what having all $k \times k$ submatrices with phaseless rank less than $k$ allows us to conclude about the phaseless rank of the full matrix. This program was carried out in the same paper \cite{levinger1972generalization}, where the following result was derived.
\begin{proposition}[\cite{levinger1972generalization}]\label{prop:lev}
Let $A \in \mathbb{R}^{n\times m}_+$, with $n\leq m$. If all $k\times k$ submatrices of $A$ have nonmaximal phaseless rank, for some $k\leq n$, then $$\rankp(A)\leq m- \left\lfloor \frac{m-1}{k-1} \right\rfloor.$$
\end{proposition}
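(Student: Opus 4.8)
The plan is to construct an explicit complex matrix $B$ with $|B|=A$ entrywise and $\rank(B)\le m-\lfloor(m-1)/(k-1)\rfloor$, by gluing together $k$-column blocks that pairwise overlap only in a single common \emph{pivot} column.

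Write $q=\lfloor(m-1)/(k-1)\rfloor$ and $r=(m-1)-q(k-1)$, so that $r\ge 0$, $m=q(k-1)+r+1$, and the target bound is exactly $m-q=q(k-2)+r+1$; note $q\ge 1$ because $k\le m$. If $A=0$ then $\rankp(A)=0\le m-q$ and we are done, so fix a nonzero column of $A$, indexed by $p$. The first step is to record that the hypothesis, via Proposition \ref{prop:rect} applied to the transpose of the $n\times k$ submatrix of $A$ on any $k$ chosen columns (a $k\times n$ matrix, with $k\le n$), implies that \emph{every} $n\times k$ submatrix of $A$ formed by selecting $k$ columns has phaseless rank strictly less than $k$.

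Next I would partition the remaining $m-1$ column indices into $q$ blocks $G_1,\dots,G_q$ each of size $k-1$ and a leftover block $G_0$ of size $r$; this is possible precisely because $m-1=q(k-1)+r$. Now build $B$ columnwise: put the (real, nonnegative) column $a_p$ of $A$ in position $p$, put the columns of $A$ indexed by $G_0$ in their own positions, and for each $i=1,\dots,q$ do the following. Since the $n\times k$ submatrix of $A$ on the columns $G_i\cup\{p\}$ has phaseless rank less than $k$, it admits an equimodular complex matrix $C^{(i)}$ of rank at most $k-1$; rescaling each row of $C^{(i)}$ by a unit-modulus complex number (the inverse phase of that row's entry in column $p$, or $1$ when that entry is $0$) we may assume the $p$-column of $C^{(i)}$ is exactly $a_p$, without changing $|C^{(i)}|$ or $\rank(C^{(i)})$. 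Place the $G_i$-columns of this rescaled $C^{(i)}$ into the corresponding positions of $B$. As $\{p\},G_1,\dots,G_q,G_0$ are pairwise disjoint and partition $\{1,\dots,m\}$, every column of $B$ is defined exactly once, $|B|=A$, and for each $i$ the submatrix of $B$ on columns $G_i\cup\{p\}$ coincides with the rescaled $C^{(i)}$ (matching on $G_i$ by construction and on $p$ by the normalization), hence has rank at most $k-1$.

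Finally I would bound $\rank(B)$. Let $U_i$ be the column span of the submatrix of $B$ on $G_i\cup\{p\}$; then $\dim U_i\le k-1$ and the nonzero vector $a_p$ lies in every $U_i$. A short induction on $q$, using $\dim(V+W)=\dim V+\dim W-\dim(V\cap W)$ and the fact that $a_p\ne 0$ lies in $U_i\cap(U_{i+1}+\dots+U_q)$ at each step, gives $\dim(U_1+\dots+U_q)\le q(k-2)+1$. Since the column span of $B$ is contained in $U_1+\dots+U_q$ together with the span of the $r$ columns indexed by $G_0$, we obtain
$$\rank(B)\le q(k-2)+1+r=m-q,$$
and hence $\rankp(A)\le\rank(B)\le m-\lfloor(m-1)/(k-1)\rfloor$, as claimed. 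The only subtle point is making the independently chosen factorizations $C^{(1)},\dots,C^{(q)}$ agree on the common pivot column, and this is exactly what the row-rescaling normalization accomplishes; it is essential that $G_1,\dots,G_q$ are pairwise disjoint, so that there is never any need to reconcile phase choices between different blocks — which is precisely the obstruction that prevents a naive partition of the columns into $k$-blocks from working.
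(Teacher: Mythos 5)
Your proof is correct. Be aware, though, that the paper does not actually prove Proposition \ref{prop:lev} itself (it is quoted from Levinger); what it proves is the stronger Proposition \ref{prop:prank_bd}, with $n$ in place of $m$, and it does so by essentially the same gluing idea you use, only transposed: it patches together full-width $k\times m$ row blocks of $A$ sharing a single common nonzero row (made real by unit-modulus column scalings), each block having a rank-deficient equimodular representative by Proposition \ref{prop:rect}, and then notes that one redundant row per block can be discarded. Your full-height column blocks sharing a pivot column give exactly the $m$-bound of the stated proposition; running the identical argument on rows instead would give the sharper $n-\lfloor (n-1)/(k-1)\rfloor$. Two points where your write-up is actually more careful than the paper's sketch: the explicit reduction, via Proposition \ref{prop:rect} applied to transposes, showing every $n\times k$ column submatrix has phaseless rank below $k$; and the rank bookkeeping through the modular-law induction on the subspaces $U_i$ together with the treatment of a possibly zero pivot, which replaces the paper's informal "erase one row per block" count.
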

In this section we use Proposition \ref{prop:rect} to improve on this result. The result we prove is virtually the same, except that we can replace the $m$ in the bound with the smaller $n$, obtaining a much better bound for rectangular matrices.

\begin{proposition}\label{prop:prank_bd}
Let $A \in \mathbb{R}^{n\times m}_+$, with $n\leq m$. If all $k\times k$ submatrices of $A$ have nonmaximal phaseless rank, for some $k\leq n$, then $$\rankp(A)\leq n- \left\lfloor \frac{n-1}{k-1} \right\rfloor.$$
\end{proposition}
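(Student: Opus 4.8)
The plan is an induction on $n$, with the substance concentrated in the case $(k-1)\mid(n-1)$. Put $r=\lfloor\frac{n-1}{k-1}\rfloor$ and write $n-1=r(k-1)+s$ with $0\le s\le k-2$; if $n<k$ then $r=0$ and there is nothing to prove, so assume $n\ge k$. If some row or column of $A$ vanishes identically one deletes it and induct, the arithmetic of the bound working out in each case, so assume $A$ has no zero row.

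Suppose first that $s>0$. Deleting any one row produces $A_0\in\mathbb R^{(n-1)\times m}_+$ all of whose $k\times k$ submatrices are $k\times k$ submatrices of $A$, hence still of nonmaximal phaseless rank; and since $1\le s\le k-2$ one has $\lfloor\frac{n-2}{k-1}\rfloor=r$. By the induction hypothesis $\rankp(A_0)\le(n-1)-r$, and adjoining the deleted row raises the rank of an optimal equimodular completion by at most one, so $\rankp(A)\le n-r$. This reduces the problem to the case $s=0$, i.e.\ $n=r(k-1)+1$.

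Now suppose $s=0$. Relabel so that rows $1,\dots,r(k-1)$ are partitioned into $r$ blocks $\Gamma_1,\dots,\Gamma_r$ of size $k-1$ each, and single out row $n$ as a ``helper''. Fix once and for all a vector $B_n\in\mathbb C^m$ with $|B_{nj}|=A_{nj}$ for all $j$ (say the nonnegative row of $A$ itself); since $A$ has no zero row, $B_n\neq 0$. For each $p$ the $k\times m$ matrix $A|_{\Gamma_p\cup\{n\}}$ inherits from $A$ the property that all its $k\times k$ submatrices have nonmaximal phaseless rank, so by Proposition~\ref{prop:rect}, $\rankp(A|_{\Gamma_p\cup\{n\}})<k$; Lemma~\ref{lem:lop} then yields $\lambda^{(p)}\in\mathbb R^k_+$ with coordinate sum one (indexed by $\Gamma_p\cup\{n\}$) so that for every column $j$ the list $\{\lambda^{(p)}_i A_{ij}:i\in\Gamma_p\cup\{n\}\}$ is nonlopsided. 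The crucial observation is that this can be converted into a choice of phases for the rows in $\Gamma_p$ \emph{alone} that forces a linear dependency inside $\Gamma_p\cup\{n\}$ using the \emph{fixed} vector $B_n$: if $\lambda^{(p)}_n>0$, set $\mu^{(p)}_i=\lambda^{(p)}_i/\lambda^{(p)}_n$; for each $j$ the list $\{A_{nj}\}\cup\{\mu^{(p)}_i A_{ij}:i\in\Gamma_p\}$ is nonlopsided, so, nonlopsidedness being equivalent to the existence of unit phases making the corresponding terms sum to zero and depending only on moduli, we may pick units $e^{i\delta_{ij}}$ ($i\in\Gamma_p$) one column at a time with $\sum_{i\in\Gamma_p}\mu^{(p)}_i A_{ij}e^{i\delta_{ij}}=B_{nj}$; putting $B_{ij}=A_{ij}e^{i\delta_{ij}}$ gives $B_n=\sum_{i\in\Gamma_p}\mu^{(p)}_i B_i$, so some $B_{\pi(p)}$ with $\pi(p)\in\Gamma_p$ lies in the span of $B_n$ and the remaining rows of $\Gamma_p$. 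If instead $\lambda^{(p)}_n=0$, then $\lambda^{(p)}$ restricted to $\Gamma_p$ is a nonzero witness for Lemma~\ref{lem:lop} applied to $A|_{\Gamma_p}$, so $\rankp(A|_{\Gamma_p})<k-1$ and we may choose phases for the rows of $\Gamma_p$ making them dependent, again extracting a pivot $B_{\pi(p)}$, $\pi(p)\in\Gamma_p$, in the span of the other rows of $\Gamma_p$.

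Since the blocks $\Gamma_p$ are pairwise disjoint and the only shared row $B_n$ was pinned down at the outset, these $r$ phase choices are mutually compatible and assemble into one matrix $B\in\Omega(A)$. For this $B$ each $B_{\pi(p)}$ lies in the span of $\{B_i:i\notin\{\pi(1),\dots,\pi(r)\}\}$, and the pivots are distinct, so the row space of $B$ is spanned by $n-r$ of its rows; hence $\rankp(A)\le\rank(B)\le n-r$. I expect the delicate point to be precisely this compatibility bookkeeping in the $s=0$ case --- guaranteeing that the phase assignment realizing each block's dependency touches no entry outside that block, which is why the helper row is frozen in advance and why it is the phase-free nonlopsidedness data (rather than a specific low-rank completion) that must be transported between submatrices --- together with cleanly disposing of the degenerate cases ($\lambda^{(p)}_n=0$, zero rows or columns). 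The $s>0$ reduction and the floor-function arithmetic are routine.
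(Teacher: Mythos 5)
Your proof is correct and follows essentially the same strategy as the paper: $r$ pairwise disjoint blocks of $k-1$ rows glued along one fixed anchor row kept real nonnegative, with Proposition \ref{prop:rect} applied to each resulting $k\times m$ submatrix and one dependent row discarded per block. The differences are only cosmetic: the paper handles the leftover rows by simply filling them with the corresponding entries of $A$ instead of inducting on $n$, and it obtains each block's compatible phases by taking any rank-deficient equimodular completion and rescaling its columns by unit scalars so the shared row becomes real, rather than rebuilding the phases column by column from the Lemma \ref{lem:lop} witness.
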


\begin{proof}
Let $M$ be an $k\times m$ submatrix of $A$. By Proposition \ref{prop:rect} the matrix $M$, has nonmaximal rank. Hence, for every $k\times m$ submatrix $M$, we can find $B_M \in \Omega(M)$ with rank less than $k$. Moreover, we are free to pick the first row of $B_M$ to be real, since scaling an entire column of $B_M$ by $e^{\theta i}$ does not change the rank or the equimodular class.

Consider then $k \times m$ submatrices $M_i$ of $A$, $i=1,\dots,\left\lfloor \frac{n-1}{k-1} \right\rfloor$ all containing the first row,which we assume non-zero, but otherwise pairwise disjoint. We can then construct a matrix $B$ by piecing together the $B_{M_i}$'s, since they coincide in the only row they share, and filling out the remaining rows, always less than $k-1$, with the corresponding entries of $A$.

By construction, in that matrix $B$ we always have in the rows corresponding to $B_{M_i}$  a row different than the first that is a linear combination of the others, and can be erased without dropping the rank of $B$. Doing this for all $i$, we get that the rank of $B$ has at least a deficiency per $B_i$, so its rank is at most
$$n- \left\lfloor \frac{n-1}{k-1} \right\rfloor,$$
and since $B$ is equimodular with $A$, $\rankp(A)$ verifies the intended inequality.
\end{proof}

Note that by setting $k=n$ we recover Proposition \ref{prop:rect}, so we have a strict extension of that result. Setting $k=2$, we get that if all $2\times 2$ minors have phaseless rank $1$ so does the matrix, which is an obvious consequence of the observation already made in Section \ref{sec:definitions} that $\rankp(A)=1$ if and only if $\rank(A)=1$. For every $k$ in-between we get new results, although not necessarily very strong. They are, however, enough to get some further geometric insight. We say that $\rankp(A)=k$ is \emph{typical} in $\RR_+^{n\times m} $ if there exists an open set in $\RR_+^{n\times m}$ for which all matrices have phaseless rank $k$.

An interesting question is the study of minimal typical ranks, which in our case corresponds to ask for the minimal $k$ for which $P^{n\times m}_k$ has full dimension.
We claim that if $k$ is typical, then we must have $k \geq \left\lceil \frac{n+m-\sqrt{(n-1)^2+(m-1)^2}}{2} \right\rceil$.
Take the map which sends each matrix in $(\mathbb{C}^{*})^{n\times m}$ to its entrywise absolute value, in $\RR_{++}^{n\times m}$. The image under this map of the variety of complex matrices with no zero entries and of rank at most $k$ is $P^{n\times m}_{k}\cap \RR_{++}^{n\times m}$, which is full-dimensional if and only if $k$ is typical. Note that we can assume that every matrix in the domain has real entries in the first row and column, since row and column scaling by complex numbers of absolute value one preserve both the rank and the entrywise absolute value matrix. The real dimension of the variety of complex matrices of rank at most $k$ with real first row and column  is $2(n+m-k)k$, twice the number of complex degrees of freedom, minus $m+n-1$, the numbers of entries forced to be real. This difference should be at least $n\times m$, the dimension of $P^{n\times m}_{k}\cap \RR_{++}^{n\times m}$, since the map is differentiable. Thus, we must have
$$2(n+m-k)k-n-m+1 \geq nm,$$
which boils down to $$k\geq \left\lceil \frac{n+m-\sqrt{(n-1)^2+(m-1)^2}}{2} \right\rceil,$$ because $k$ is a positive integer.

\begin{corollary}\label{cor:dimension}
For $\RR_+^{n \times m}$, with $3 \leq n \leq m$, the minimal typical phaseless rank $k$ must verify
$$ \left\lceil \frac{n+m-\sqrt{(n-1)^2+(m-1)^2}}{2} \right\rceil \leq k \leq \left\lceil \frac{n+1}{2} \right\rceil.$$
\end{corollary}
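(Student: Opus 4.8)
The plan is to prove the two inequalities separately. The left-hand inequality is precisely the claim established in the discussion immediately preceding the statement, so I would simply reproduce that argument: the entrywise absolute value map restricted to the variety of complex $n\times m$ matrices of rank at most $k$ with real first row and column is a differentiable surjection onto $P^{n\times m}_k\cap\RR_{++}^{n\times m}$, and a differentiable map cannot raise dimension, so if $k$ is typical the real dimension $2(n+m-k)k-(n+m-1)$ of the source must be at least $nm$; solving the resulting quadratic inequality for $k$ and using that $k$ is an integer yields the lower bound. Hence the only genuinely new content is the upper bound, for which the idea is to exhibit an explicit open set of $n\times m$ matrices of phaseless rank at most $\left\lceil\frac{n+1}{2}\right\rceil$, by combining Proposition \ref{prop:prank_bd} (applied with submatrix size $3$) with the fact that $P^{3\times 3}_2$ has nonempty interior.

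For the upper bound I would first check that the $3\times 3$ all-ones matrix $J_3$ lies in the interior of $P^{3\times 3}_2$. Indeed, taking $\lambda=(\tfrac13,\tfrac13,\tfrac13)$ in Lemma \ref{lem:lop}, every lopsidedness inequality $A_{il}\lambda_i\le\sum_{k\neq i}A_{kl}\lambda_k$ reduces to $1\le 2$, which is strict; since these are finitely many strict inequalities in the entries, they persist on an open neighbourhood of $J_3$, which therefore lies inside $P^{3\times 3}_2$ (this is also suggested by Figure \ref{fig:sets}). Now let $J$ denote the $n\times m$ all-ones matrix. Every $3\times 3$ submatrix of a matrix sufficiently close to $J$ is close to $J_3$, so there is an open neighbourhood $U\subseteq\RR_+^{n\times m}$ of $J$ all of whose members have every $3\times 3$ submatrix of phaseless rank at most $2$. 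Applying Proposition \ref{prop:prank_bd} with its parameter set to $3$, every $A\in U$ satisfies
$$\rankp(A)\le n-\left\lfloor\frac{n-1}{2}\right\rfloor=\left\lceil\frac{n+1}{2}\right\rceil,$$
the last equality being an elementary check in the two parities of $n$. Thus $P^{n\times m}_{\lceil (n+1)/2\rceil}$ is full-dimensional and the minimal typical phaseless rank does not exceed $\left\lceil\frac{n+1}{2}\right\rceil$.

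I do not expect a serious obstacle: the corollary is essentially a packaging of the dimension count preceding it together with Proposition \ref{prop:prank_bd}. The only point requiring a little care is the one singled out above, namely producing an honest \emph{open} set in which \emph{all} $3\times 3$ submatrices simultaneously have nonmaximal phaseless rank --- which the strict form of the lopsidedness inequalities around $J_3$ supplies --- together with the bookkeeping identity $n-\left\lfloor\frac{n-1}{2}\right\rfloor=\left\lceil\frac{n+1}{2}\right\rceil$ and checking that the hypothesis $k\le n$ of Proposition \ref{prop:prank_bd} holds, which it does since $n\ge 3$.
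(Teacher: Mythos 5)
Your proposal is correct and follows essentially the same route as the paper's own proof: the lower bound is the preceding dimension count, and the upper bound comes from observing that all $3\times 3$ submatrices of any small perturbation of the all-ones matrix remain nonlopsided (hence of nonmaximal phaseless rank) and then invoking Proposition \ref{prop:prank_bd} with $k=3$, together with the identity $n-\left\lfloor\frac{n-1}{2}\right\rfloor=\left\lceil\frac{n+1}{2}\right\rceil$. Your explicit choice $\lambda=(\tfrac13,\tfrac13,\tfrac13)$ in Lemma \ref{lem:lop} is just a spelled-out version of the paper's "nonlopsided columns" remark, so there is nothing substantively different to flag.
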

\begin{proof}
The lower bound comes from the above dimension count. To prove the upper bound, note that the $3 \times 3$ all-ones matrix has phaseless rank $1$ (less than three), and any small enough entrywise perturbation of it also has phaseless rank less than $3$, since it will still have nonlopsided columns. This means that the $n \times m$ all-ones matrix, and any sufficiently small perturbation of it, have all $3 \times 3$ submatrices with nonmaximal phaseless rank, which implies, by Proposition \ref{prop:prank_bd}, that their phaseless rank is at most $\left\lceil \frac{n+1}{2} \right\rceil$. Hence, there exists an open set of $\RR_+^{n \times m}$ in which every matrix has phaseless rank less or equal than that number, which implies the smallest typical rank is at most that, giving us the upper bound.
\end{proof}

For $m$ much larger than $n$ the bound is almost tight, since the lower bound converges to $n/2$. In fact, for odd $n$ and sufficiently large $m$ we will have that the typical rank is actually $\frac{n+1}{2}$, since that will be the only integer satisfying both bounds.

\section{Applications and outlook}

\subsection{The amoeba point of view}\label{subsec:amb_pov}

Many of the results developed in the previous sections have nice interpretations from the viewpoint of amoeba theory. Here, we will introduce some concepts and problems coming from this area of research and show the implications of the work previously developed.

As mentioned before, checking for amoeba membership is a hard problem. Even certifying that a point is not in an amoeba is generally difficult. To that end, several necessary conditions for amoeba membership have been developed. One such condition is the non-lopsidedness criterion. In its most basic form, this gives a necessary condition for a point to be in the amoeba of the principal ideal generated by some polynomial $f$, $\mathcal{A}(f)$.

Let $f \in \mathbb{C}[z_1,\ldots,z_n]$ and $a \in \mathbb{R}^n$. By writing $f$ as a sum of monomials, $f(\mathbf{z})=m_1(\mathbf{z})+\ldots+m_d(\mathbf{z})$, define
$$f\{\mathbf{a}\}:=\{|m_1(\mathbf{a})|,\ldots,|m_d(\mathbf{a})|\}.$$
It is clear that in order for $\mathbf{a}$ to be the vector of absolute values of some complex root of $f$, the vector $f\{\mathbf{a}\}$ cannot be lopsided, as it must cancel after the phases are added in. We then define
$$\textup{Nlop}(f)=\{a \in \RR^n \ : \ f\{\mathbf{a}\} \text{ is not lopsided}\}.$$
It is clear that $\mathcal{A}(f) \subseteq \textup{Log}(\textup{Nlop}(f))$, but the inclusion is generally strict. One immediate consequence of Example \ref{ex:3x3characterization} is the following.
\begin{proposition}
Let $f=det(X)$ be the cubic polynomial in variables $x_{ij}$, $i,j=1,2,3$. Then
$$\mathcal{A}(f) = \textup{Log}(\textup{Nlop}(f)).$$
\end{proposition}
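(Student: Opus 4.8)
The plan is to wedge both sets between the same object, namely the region $P^{3\times 3}_2$ of nonnegative $3\times 3$ matrices of phaseless rank at most $2$, using the description in Example \ref{ex:3x3characterization} together with the identity $P^{n\times m}_k=\Aalg(Y_k^{n,m})$ recorded in Section 3. Since $\textup{Log}$ restricts to a bijection from $\RR_{++}^{3\times 3}$ onto $\RR^{3\times 3}$, and both $\textup{Nlop}(f)$ and the variety $V(f)$ depend only on entrywise absolute values, everything reduces to showing that, for a strictly positive $3\times 3$ matrix $A$, one has $A\in\textup{Nlop}(f)$ if and only if $\rankp(A)\le 2$.

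First I would unwind $\textup{Nlop}(f)$ for $f=\det(X)$. Its monomials are, up to sign, the six products $m_\sigma(X)=x_{1\sigma(1)}x_{2\sigma(2)}x_{3\sigma(3)}$ indexed by $\sigma\in S_3$, so $f\{A\}=\{\,m_\sigma(A):\sigma\in S_3\,\}$, and non-lopsidedness of this six-element list is precisely the conjunction over $\tau\in S_3$ of the inequalities $m_\tau(A)\le\sum_{\sigma\neq\tau}m_\sigma(A)$. On the other hand, the sign computation carried out in Example \ref{ex:3x3characterization} shows that $\det(\mathcal M(AP))=m_\tau(A)-\sum_{\sigma\neq\tau}m_\sigma(A)$, where $\tau\in S_3$ is the permutation underlying $P$; as $P$ ranges over the six permutation matrices so does $\tau$, so the constraints $\det(\mathcal M(AP))\le 0$, $P\in S_3$, are, term by term, exactly the non-lopsidedness inequalities for $f\{A\}$. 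Combining this with the characterization in Example \ref{ex:3x3characterization} (a special case of Camion--Hoffman's Theorem) gives
$$A\in\textup{Nlop}(f)\iff \det(\mathcal M(AP))\le 0\ \text{for all }P\in S_3\iff \rankp(A)\le 2 .$$

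Finally I would reassemble the amoeba statement. Because $\textup{Nlop}(f)$ is invariant under flipping signs of coordinates, a point $b\in\RR^{3\times 3}$ lies in $\textup{Log}(\textup{Nlop}(f))$ if and only if $e^{b}\in\textup{Nlop}(f)$; and $b$ lies in $\mathcal A(f)$, the amoeba of the hypersurface $\{\det X=0\}=Y_2^{3,3}$, if and only if there is a complex $3\times 3$ matrix $Z$ with $|Z|=e^{b}$ and $\det Z=0$, i.e.\ if and only if $e^{b}\in P^{3\times 3}_2$ (the condition $Z\in(\CC^*)^9$ being automatic since $e^{b}$ is strictly positive). By the equivalence established above, these two conditions on $e^{b}$ coincide, so $\mathcal A(f)=\textup{Log}(\textup{Nlop}(f))$. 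The argument is essentially bookkeeping, and I do not anticipate a genuine obstacle; the single point requiring care is the sign computation of Example \ref{ex:3x3characterization}, which is what makes the six Camion--Hoffman determinant inequalities line up one-to-one with the six non-lopsidedness inequalities.
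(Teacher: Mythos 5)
Your proposal is correct and follows essentially the paper's own route: the paper derives this proposition as an immediate consequence of Example \ref{ex:3x3characterization}, matching the six Camion--Hoffman determinant inequalities $\det(\mathcal{M}(AP))\leq 0$ one-to-one with the non-lopsidedness inequalities on the six monomials of $\det(X)$, and identifying amoeba membership with $\rankp \leq 2$ via $P^{3\times 3}_2=\Aalg(Y_2^{3,3})$. Your write-up just makes explicit the bookkeeping (sign-invariance of $\textup{Nlop}$, restriction to $(\CC^*)^9$) that the paper leaves implicit.
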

So, the above proposition gives us an example where nonlopsidedness is a necessary and sufficient condition. In fact, this is just a special case of a more general result from amoeba theory: that for any polynomial whose support forms the set of vertices of a simplex (which is the case for the $3\times3$ determinant), it holds that $\mathcal{A}(f) = \textup{Log}(\textup{Nlop}(f))$ . This follows from \cite{forsberg2000laurent} (see, for instance, Theorem 3.1 of \cite{theobald2013amoebas} for details).

Another interesting example that we can extract from our results concerns amoeba bases. Purbhoo shows, in \cite{purbhoo2008nullstellensatz}, that the amoebas of general ideals can be reduced in a way to the case of principal ideals, since $\mathcal{A}(V(I))=\bigcap_{f \in I} \mathcal{A}(f)$. The problem is that this is an infinite intersection, which immediately raises the question if a finite intersection may suffice. This suggests the notion of an \emph{amoeba basis}, introduced in \cite{schroeter2013boundary}.
\begin{definition}
Given an ideal $I \subseteq \mathbb{C}[z_1,\ldots,z_n]$, we call a finite set $B \subset I$ an amoeba basis for $I$ if it generates $I$ and it verifies the property
$$\mathcal{A}(V(I))=\bigcap_{f \in B} \mathcal{A}(f)$$
while any proper subset of $B$ does not.
\end{definition}
Unfortunately, amoeba bases may fail to exist and in fact very few examples of them are known. In \cite{nisse2018describing} it is proved that varieties of  a particular kind, those that are \emph{independent complete intersections}, have amoeba bases, and it is conjectured that only union of those can have them (see \cite[Conjecture 5.3]{nisse2018describing}). Proposition \ref{prop:rect} gives us a nice new example of such nice behavior, disproving the conjecture, since the variety of $n \times m$ rectangular matrices, with $n < m$, of rank less than $n$ is irreducible and not even a set-theoretic complete intersection \cite{Bruns90}.
\begin{corollary}
Let $X$ be an $n \times m$ matrix of indeterminates. The set of maximal minors of $X$ is an amoeba basis for the determinantal ideal they generate.
\end{corollary}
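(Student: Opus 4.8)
The plan is to verify the two defining properties of an amoeba basis for $B = \{ \text{maximal minors of } X\}$, namely that $B$ generates the determinantal ideal $I^{n,m}_n$ (which is classical, being the definition of that ideal), and that $\mathcal{A}(V(I^{n,m}_n)) = \bigcap_{f \in B} \mathcal{A}(f)$, together with the minimality of $B$ with respect to this last property. The heart of the matter is the amoeba identity, and this is exactly what Proposition \ref{prop:rect} gives us once everything is translated into the logarithmic coordinates.

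First I would set up the dictionary. Write $f_I$ for the maximal minor of $X$ on the column set $I$, so $V(f_I)$ is the set of $n \times m$ complex matrices whose $n \times n$ submatrix on columns $I$ is singular. By the observation made just after Example \ref{ex:amoeba}, $P^{n\times m}_{n-1} = \Aalg(Y^{n,m}_{n-1}) = \Aalg(V(I^{n,m}_n))$, and similarly $\Aalg(V(f_I)) = \Aalg(Y^{n,n}_{n-1})$ read off on the columns $I$, i.e. it is the set of nonnegative $n\times m$ matrices $A$ with $\rankp(A_I) < n$ (padding the columns outside $I$ arbitrarily). Taking entrywise logarithms, $\mathcal{A}(f_I)$ is the log of this set. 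Then Proposition \ref{prop:rect} reads precisely as
$$A \in P^{n\times m}_{n-1} \iff A_I \text{ has } \rankp < n \text{ for all } I \text{ with } |I| = n,$$
which is the statement $\Aalg(V(I^{n,m}_n)) = \bigcap_{|I|=n} \Aalg(V(f_I))$, and hence, after applying $\mathrm{Log}$, $\mathcal{A}(V(I^{n,m}_n)) = \bigcap_{|I|=n} \mathcal{A}(f_I)$. One small point to be careful about here, analogous to the remark following Example \ref{ex:amoeba}: the amoeba $\mathcal{A}(f_I)$ is defined with a single phase per variable $x_{ij}$, not a free phase per occurrence, but since row and column scalings by unimodular complex numbers preserve ranks and absolute values, the two notions of "phaseless rank of $A_I$" coincide, so this causes no trouble.

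The remaining obstacle, and the part I expect to require genuine argument rather than bookkeeping, is minimality: one must show that no proper subset $B' \subsetneq B$ still satisfies $\mathcal{A}(V(I^{n,m}_n)) = \bigcap_{f \in B'} \mathcal{A}(f)$. Equivalently, for each column set $I_0$ one needs a point $A$ with $A \in \mathcal{A}(f_I)$ for every $I \neq I_0$ but $A \notin \mathcal{A}(f_{I_0})$. By the lopsidedness criterion of Lemma \ref{lem:lop} (and Corollary \ref{cor:lp}), I would build such a matrix explicitly: choose the columns indexed by $I_0$ so that the corresponding $n\times n$ submatrix is, say, a small perturbation of the derangement-type construction forcing maximal phaseless rank (using Camion--Hoffman, Theorem \ref{thm:cam_hof}, to certify $\rankp(A_{I_0}) = n$ via a nonsingular $M$-matrix comparison matrix), while making all the other columns identical copies of a single column, or more simply making every column outside $I_0$ a scalar multiple of one fixed column vector whose entries render all the relevant lopsidedness conditions satisfiable. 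For any $I \neq I_0$, the submatrix $A_I$ then has two proportional columns and hence obviously has $\rankp(A_I) < n$, so $A \in \mathcal{A}(f_I)$; but $A_{I_0}$ was chosen to have full phaseless rank, so $A \notin \mathcal{A}(f_{I_0})$. Filling in the details of the perturbation — ensuring simultaneously that $A_{I_0}$ stays in $\mathbb{R}^{n\times n}_{++}$ with maximal phaseless rank and that the remaining columns genuinely lie in the domain of the amoeba (no zero entries) — is the one place where a concrete calculation is needed, but it is routine once the $M$-matrix certificate is in hand. This establishes that $B$ is an amoeba basis.
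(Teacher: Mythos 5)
Your treatment of the main point, the equality $\mathcal{A}(V(I^{n,m}_n))=\bigcap_I \mathcal{A}(f_I)$, is exactly the paper's argument: it is Proposition \ref{prop:rect} read through the dictionary $P^{n\times m}_{n-1}=\Aalg(Y^{n,m}_{n-1})$, with the row/column unimodular scaling remark taking care of the ``one phase per variable'' issue. That part is fine, and generation of the ideal by the maximal minors is indeed classical.

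The gap is in your minimality step. First, the concrete construction does not do what you claim: if $I$ differs from $I_0$ in exactly one column (which always happens, e.g.\ whenever $|I\cap I_0|=n-1$, and is the only case when $m=n+1$), the submatrix $A_I$ contains only one of the ``proportional'' columns, so it need not have two proportional columns and nothing forces $\rankp(A_I)<n$. Second, and more seriously, the witness you are trying to build need not exist at all. Already for $n=2$, $m=3$ and $I_0=\{1,2\}$: if a positive $2\times 3$ matrix has $\rankp(A_{\{1,3\}})=1$ and $\rankp(A_{\{2,3\}})=1$, i.e.\ $a_{11}a_{23}=a_{13}a_{21}$ and $a_{12}a_{23}=a_{13}a_{22}$, then multiplying these and cancelling $a_{13}a_{23}>0$ gives $a_{11}a_{22}=a_{12}a_{21}$, so $\rankp(A_{\{1,2\}})=1$ automatically. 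Hence $\bigcap_{I\neq I_0}\mathcal{A}(f_I)\subseteq\mathcal{A}(f_{I_0})$ and no separation point exists; no perturbation or $M$-matrix certificate can repair this, because the statement you are trying to certify is false. The minimality clause in the paper's definition should instead be discharged through the generation requirement: the maximal minors are linearly independent forms of degree $n$ and the ideal contains no nonzero element of degree below $n$, so they form a minimal generating set, and any proper subset already fails to generate $I^{n,m}_n$ and hence fails the definition. With that observation the corollary is, as in the paper, an immediate consequence of Proposition \ref{prop:rect}, and the amoeba-theoretic separation argument you propose is both unnecessary and unattainable.
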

Note that this is just another result in a long line of results about the special properties of the basis of maximal minors of a matrix of indeterminates, notoriously including the fact that they form a universal Groebner basis, as proved in \cite{Bernstein1993}.
For $3 \times n$ matrices we actually have that the nonlopsidedness of the generators is enough to guarantee the amoeba membership, an even stronger condition.

All other results automatically translate to amoeba theory, and some have interesting translations. We provide explicit semialgebraic descriptions for the amoeba of maximal minors, adding one example to the short list of amoebas for which such is available, as pointed out in \cite[Question 3.7]{nisse2018describing}. Moreover, Proposition \ref{prop:boundary} implies that the boundary of the amoeba of the determinant of a square matrix of indeterminates is contained in the image by the entrywise absolute value map of the set of its real zeros, while Corollary \ref{cor:dimension} states some conditions for full dimensionality of the amoeba of the variety of bounded rank matrices.

\subsection{Implications on semidefinite rank}

As we saw before, upper bounds on the phaseless rank will immediately give us upper bounds on the complex semidefinite rank. One can use that to improve on some results in the literature, and hopefully to construct examples.

For a simple illustration, recall the following result proved in \cite{lee2017some}, that gives sufficient conditions for nonmaximality of the complex semidefinite rank of a matrix.

\begin{proposition}[\cite{lee2017some}]\label{prop:leewei}
Let $A\in \mathbb{R}^{n\times m}_+$. If every column of $\sqrt[\circ]{A}$ has no dominant entry (i.e., if every column of $\sqrt[\circ]{A}$ is not lopsided), then $\rankpsd(A)<n$.
\end{proposition}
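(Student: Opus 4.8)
The statement to prove is Proposition \ref{prop:leewei}: if every column of $\sqrt[\circ]{A}$ is not lopsided, then $\rankpsd(A)<n$. The plan is to combine Lemma \ref{lem:lop} (the lopsidedness characterization of nonmaximal phaseless rank) with Proposition \ref{prop:ineqs} (which says $\rankpsd(A)\le\rankp(\sqrt[\circ]{A})$). Write $C=\sqrt[\circ]{A}$, so $C\in\RR^{n\times m}_+$ and, by hypothesis, every column $\{C_{1l},\ldots,C_{nl}\}$ is not lopsided. I want to conclude $\rankp(C)<n$, and then $\rankpsd(A)\le\rankp(C)<n$ finishes the proof.

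First I would recall that Lemma \ref{lem:lop} requires producing a single vector $\lambda\in\RR^n_+$ with $\sum_i\lambda_i=1$ such that for every column $l$, the scaled list $\{C_{1l}\lambda_1,\ldots,C_{nl}\lambda_n\}$ is not lopsided. The hypothesis gives non-lopsidedness of the unscaled columns, which corresponds precisely to the choice $\lambda=(1/n,\ldots,1/n)$: scaling every entry of a list by the same positive constant $1/n$ does not affect whether it is lopsided, since lopsidedness is the homogeneous condition "one entry exceeds the sum of the others." Hence $\lambda_i=1/n$ lies in $\mathrm{Lop}(c_l)$ for every column $c_l$, so the hypothesis of Lemma \ref{lem:lop} is met and $\rankp(C)<n$.

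Then I would invoke Proposition \ref{prop:ineqs}, $\rankpsd(A)\le\rankp(\sqrt[\circ]{A})=\rankp(C)<n$, which is exactly the claim. Alternatively, one can avoid quoting Proposition \ref{prop:ineqs} and argue directly: Lemma \ref{lem:lop} not only asserts $\rankp(C)<n$ but, via its proof, produces a complex matrix $B$ equimodular with $C$ and a nonzero vector $z$ with $Bz=0$; writing $B=UV^t$ in a rank-$(n-1)$ factorization and taking $U_i=u_iu_i^*$, $V_j=v_jv_j^*$ gives a complex psd factorization of $A=C\circ C$ of size $n-1$, as in the proof of Lemma \ref{lem:inequality}. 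Either route works; the first is shorter.

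The main subtlety—and it is minor—is simply observing that lopsidedness is scale-invariant, so that the per-column hypothesis can be fed into Lemma \ref{lem:lop} with the uniform weight vector; there is no real obstacle here, as this proposition is essentially a corollary of the two cited results. I would present the short version and perhaps add a one-line remark noting that the original proof in \cite{lee2017some} is of the direct factorization type.
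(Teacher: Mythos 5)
Your argument is correct and is essentially the paper's own reasoning: although the paper cites \cite{lee2017some} for the statement, its accompanying remark derives it exactly as you do, by applying Lemma \ref{lem:lop} to $\sqrt[\circ]{A}$ (the uniform weights $\lambda_i=1/n$ work since lopsidedness is invariant under uniform positive scaling) and then invoking $\rankpsd(A)\leq\rankp(\sqrt[\circ]{A})$ from Proposition \ref{prop:ineqs}. No gaps; your observation about scale-invariance is the only point needing mention, and you handle it correctly.
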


We remark that the assumption in the previous result is just a sufficient condition for $\rankp(\sqrt[\circ]{A}) < n$, which implies $\rankpsd(A)<n$, by Proposition \ref{prop:ineqs}. This observation easily follows from applying Lemma \ref{lem:lop} to $\sqrt[\circ]{A}$. This means that Proposition \ref{prop:leewei} is just a specialization of the following more general statement.

\begin{proposition}
Let $A\in \mathbb{R}^{n\times m}_+$. If $\rankp(\sqrt[\circ]{A}) < n$, then $\rankpsd(A)<n$.
\end{proposition}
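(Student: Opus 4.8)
The plan is to recognize that this statement is just a rephrasing of Proposition~\ref{prop:ineqs}, so the proof is essentially a one-line deduction. That proposition already records the inequality $\rankpsd(A) \le \rankp(\sqrt[\circ]{A})$ valid for every nonnegative matrix $A$; substituting the hypothesis $\rankp(\sqrt[\circ]{A}) < n$ gives $\rankpsd(A) < n$ immediately. I would therefore present the proof as a short invocation of the earlier result, perhaps adding one sentence to justify why it is legitimate to call Proposition~\ref{prop:leewei} a special case.

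For the reader's convenience I would also recall the construction underlying Proposition~\ref{prop:ineqs}, which is the same one used in the proof of Lemma~\ref{lem:inequality}. Put $r = \rankp(\sqrt[\circ]{A})$ and pick $B \in \Omega(\sqrt[\circ]{A})$ with $\rank(B) = r < n$; take a minimal factorization $B = UV^t$ with $U \in \CC^{n\times r}$, $V \in \CC^{m\times r}$, and let $u_i, v_j \in \CC^r$ be the $i$-th and $j$-th rows of $U$ and $V$. The rank-one Hermitian matrices $U_i := u_i u_i^*$ and $V_j := v_j v_j^*$ lie in $\S_+^r(\CC)$, and one has
$$\langle U_i, V_j \rangle = |\langle u_i, v_j \rangle|^2 = |B_{ij}|^2 = \big(\sqrt[\circ]{A}\big)_{ij}^{\,2} = A_{ij},$$
so $(U_i)_i$, $(V_j)_j$ is a complex positive semidefinite factorization of $A$ through $r \times r$ matrices, whence $\rankpsd(A) \le r < n$.

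I do not expect a genuine obstacle here: the only thing to be mildly careful about is the conjugate bookkeeping in the identity $\langle u_i u_i^*, v_j v_j^* \rangle = |\langle u_i, v_j \rangle|^2$ for the trace inner product of two rank-one Hermitian matrices, and the (obvious) fact that forming outer products of the rows of a factorization of $\sqrt[\circ]{A}$ does not increase the inner dimension. The one conceptual point worth stating explicitly is how Proposition~\ref{prop:leewei} sits inside this statement: by Lemma~\ref{lem:lop} applied to $\sqrt[\circ]{A}$, the condition ``every column of $\sqrt[\circ]{A}$ is nonlopsided'' is precisely a \emph{sufficient} condition for $\rankp(\sqrt[\circ]{A}) < n$, hence a strictly stronger hypothesis than the one assumed here, so the present proposition indeed subsumes it.
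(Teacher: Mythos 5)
Your proposal is correct and matches the paper, which likewise treats this proposition as an immediate consequence of the inequality $\rankpsd(A) \leq \rankp(\sqrt[\circ]{A})$ from Proposition \ref{prop:ineqs} (whose underlying outer-product construction you recall, exactly as the paper does when referencing Lemma \ref{lem:inequality}). Your closing remark about Proposition \ref{prop:leewei} being the special case obtained via Lemma \ref{lem:lop} is also precisely the paper's framing.
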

\label{eqn:ranks}

One can check whether $\rankp(\sqrt[\circ]{A}) < n$ by using both Proposition \ref{prop:rect}, if the matrix is not square, and Theorem \ref{thm:cam_hof}. More generally, Proposition \ref{prop:ineqs} dictates that every upper bound for $\rankp(\sqrt[\circ]{A})$ is an upper bound for $\rankpsd(A)$. Thus, we have the following corollary of Proposition \ref{prop:prank_bd}.
\begin{corollary}\label{cor:sqrank}
Let $A \in \mathbb{R}^{n\times m}_+$, with $n \leq m$. If all $k\times k$ submatrices of $\sqrt[\circ]{A}$ have nonmaximal phaseless rank, $$\rankpsd(A)\leq n-\left\lfloor \frac{n-1}{k-1}\right\rfloor.$$
\end{corollary}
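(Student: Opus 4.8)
The plan is to chain together two facts already available in the excerpt: the inequality $\rankpsd(A) \leq \rankp(\sqrt[\circ]{A})$ from Proposition \ref{prop:ineqs}, and the upper bound $\rankp(\sqrt[\circ]{A}) \leq n - \lfloor (n-1)/(k-1)\rfloor$ which is exactly what Proposition \ref{prop:prank_bd} delivers once we know its hypothesis is met. So the entire content of the corollary is a routine transfer of Proposition \ref{prop:prank_bd} across the bound in Proposition \ref{prop:ineqs}.

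Concretely, first I would observe that the hypothesis ``all $k \times k$ submatrices of $\sqrt[\circ]{A}$ have nonmaximal phaseless rank'' is precisely the hypothesis of Proposition \ref{prop:prank_bd} applied to the matrix $\sqrt[\circ]{A} \in \RR_+^{n\times m}$. That proposition then yields
$$\rankp(\sqrt[\circ]{A}) \leq n - \left\lfloor \frac{n-1}{k-1}\right\rfloor.$$
Second, I would invoke Proposition \ref{prop:ineqs}, which states $\rankpsd(A) \leq \rankp(\sqrt[\circ]{A})$ for any nonnegative matrix $A$. Combining the two inequalities gives
$$\rankpsd(A) \leq \rankp(\sqrt[\circ]{A}) \leq n - \left\lfloor \frac{n-1}{k-1}\right\rfloor,$$
which is the claimed bound.

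Since both ingredients are quoted verbatim from earlier in the paper, there is essentially no obstacle here; the only thing to be careful about is the bookkeeping of which matrix plays which role — Proposition \ref{prop:prank_bd} is applied to $\sqrt[\circ]{A}$, not to $A$ itself, and one should note in passing that the constraint $k \leq n$ (needed for Proposition \ref{prop:prank_bd}) is implicit, since the hypothesis presupposes that $k \times k$ submatrices of an $n \times m$ matrix with $n \leq m$ exist. If anything deserves a sentence of commentary, it is the remark that this strictly improves the naive bound one would get by substituting $m$ for $n$ (which is what a direct application of Levinger's Proposition \ref{prop:lev} via Proposition \ref{prop:ineqs} would give), paralleling the improvement of Proposition \ref{prop:prank_bd} over Proposition \ref{prop:lev} in the phaseless setting.
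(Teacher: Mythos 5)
Your proposal is correct and matches the paper's own argument exactly: the paper introduces this corollary with the remark that, by Proposition \ref{prop:ineqs}, any upper bound on $\rankp(\sqrt[\circ]{A})$ bounds $\rankpsd(A)$, and then applies Proposition \ref{prop:prank_bd} to $\sqrt[\circ]{A}$, which is precisely your chain of inequalities. Your bookkeeping remarks (applying Proposition \ref{prop:prank_bd} to $\sqrt[\circ]{A}$ rather than $A$, and the implicit $k \leq n$) are accurate and add nothing that conflicts with the paper.
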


One can actually improve on both these results by removing the need to consider the Hadamard square root. To do that, we need an auxiliary lemma, concerning the Hadamard product of matrices:

\begin{lemma}
Let $A \in \mathbb{R}^{n\times n}_+$ and $\alpha \geq 1$. If $\rankp(A)=n$, then $\rankp(A^{\circ \alpha})=n$, where $A^{\circ \alpha}$ is obtained from $A$ by taking entrywise powers $\alpha$.
\end{lemma}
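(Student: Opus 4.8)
The plan is to use the Camion–Hoffman characterization (Theorem \ref{thm:cam_hof}) to reduce the statement about $A^{\circ\alpha}$ to a statement about comparison matrices, and then argue that the $M$-matrix property is preserved when we raise entries to a power $\alpha \geq 1$. By Theorem \ref{thm:cam_hof}, $\rankp(A)=n$ means there is a permutation matrix $P$ such that $\mathcal{M}(AP)$ is a nonsingular $M$-matrix. We want to produce a permutation matrix (the natural candidate is the same $P$) for which $\mathcal{M}(A^{\circ\alpha}P)$ is also a nonsingular $M$-matrix, which would give $\rankp(A^{\circ\alpha})=n$ again by Theorem \ref{thm:cam_hof}. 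Note that $\mathcal{M}(A^{\circ\alpha}P) = \mathcal{M}((AP)^{\circ\alpha})$, so the question is purely about how the $M$-matrix property of a comparison matrix behaves under entrywise powers of the underlying nonnegative matrix.

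First I would set $B = AP$, so $B \in \RR^{n\times n}_+$ and $\mathcal{M}(B)$ is a nonsingular $M$-matrix; I must show $\mathcal{M}(B^{\circ\alpha})$ is too. I would use characterization (iii) of Proposition \ref{prop:mmatrix}: the diagonal entries of $\mathcal{M}(B)$ are positive (so all $b_{ii}>0$, hence all $b_{ii}^\alpha>0$) and there is a diagonal matrix $D$, necessarily with positive diagonal entries $d_1,\dots,d_n$, such that $\mathcal{M}(B)D$ is strictly diagonally dominant, i.e.
\[
b_{ii} d_i > \sum_{k\neq i} b_{ik} d_k \quad \text{for all } i.
\]
The goal is to find new positive weights $e_1,\dots,e_n$ with $b_{ii}^\alpha e_i > \sum_{k\neq i} b_{ik}^\alpha e_k$ for all $i$. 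The natural guess is $e_k = d_k^\alpha$, but that does not obviously work because $x\mapsto x^\alpha$ is superadditive only in a limited sense. A cleaner route: normalize by dividing row $i$ by $b_{ii}$, so the diagonal-dominance inequality reads $d_i > \sum_{k\neq i} \frac{b_{ik}}{b_{ii}} d_k$, and raising entries to the power $\alpha$ turns the ratios $b_{ik}/b_{ii}$ into $(b_{ik}/b_{ii})^\alpha \leq b_{ik}/b_{ii}$ only when $b_{ik}/b_{ii}\leq 1$ — which need not hold. So the power can make off-diagonal ratios larger, and a direct monotonicity argument fails.

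The main obstacle, then, is precisely this: raising to a power $\geq 1$ is not monotone on diagonal dominance, so one cannot simply reuse $D$. I expect the correct argument to go instead through a limiting / perturbation or continuity argument, or through characterization (vi) (every real eigenvalue positive) combined with a Perron–Frobenius type estimate on the spectral radius of the off-diagonal part. One promising approach: write $\mathcal{M}(B) = \operatorname{diag}(b_{ii})(I - N)$ where $N$ is nonnegative with zero diagonal; nonsingular $M$-matrix means $\rho(N)<1$. For $\mathcal{M}(B^{\circ\alpha}) = \operatorname{diag}(b_{ii}^\alpha)(I - N')$ with $N'_{ik} = (b_{ik}/b_{ii})^\alpha$, one needs $\rho(N')<1$. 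Since $N' = $ (entrywise $\alpha$-power of $N$ after the diagonal scaling that makes row sums controlled), one can invoke the fact that for a nonnegative matrix with $\rho(N)<1$ there is a positive diagonal similarity making all row sums $<1$ (this is again diagonal dominance), and then — crucially — use the inequality $\sum_k c_k^\alpha \leq \left(\sum_k c_k\right)^\alpha$ for nonnegative $c_k$ and $\alpha\geq 1$ applied to the scaled rows, which DOES hold. Concretely, after the diagonal scaling by $D$ we have $\sum_{k\neq i}(b_{ik}d_k)/(b_{ii}d_i) < 1$; setting $c_{ik} = (b_{ik}d_k)/(b_{ii}d_i)$ and $e_i = d_i^\alpha$, superadditivity of $t\mapsto t^\alpha$ gives $\sum_{k\neq i} c_{ik}^\alpha \leq \big(\sum_{k\neq i} c_{ik}\big)^\alpha < 1$, and $c_{ik}^\alpha = (b_{ik}^\alpha e_k)/(b_{ii}^\alpha e_i)$, so $\mathcal{M}(B^{\circ\alpha})\operatorname{diag}(e)$ is strictly diagonally dominant with positive diagonal, hence a nonsingular $M$-matrix by Proposition \ref{prop:mmatrix}(iii). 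This closes the argument; the only subtlety to handle carefully is the case where some $b_{ij}=0$ off-diagonal (harmless, the sums only get smaller) and confirming the $c_{ik}\geq 0$ so the superadditivity inequality applies.
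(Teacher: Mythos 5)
Your proof is correct, but it takes a genuinely different route from the paper's. The paper disposes of the lemma in two lines by invoking characterization (vi) of Proposition \ref{prop:mmatrix} (every real eigenvalue positive, i.e.\ the minimal real eigenvalue $\sigma(\mathcal{M}(AP))$ is positive) together with an external spectral inequality of Elsner for Hadamard powers, $\sigma(\mathcal{M}((AP)^{\circ\alpha}))\geq \sigma(\mathcal{M}(AP))^{\alpha}$, which immediately preserves positivity. You instead work with characterization (iii): starting from a positive diagonal scaling $D$ making $\mathcal{M}(AP)D$ strictly diagonally dominant, you pass to $E=\operatorname{diag}(d_1^{\alpha},\dots,d_n^{\alpha})$ and use the elementary superadditivity inequality $\sum_k c_k^{\alpha}\leq\bigl(\sum_k c_k\bigr)^{\alpha}$ for nonnegative $c_k$ and $\alpha\geq 1$, applied to the scaled row ratios $c_{ik}=b_{ik}d_k/(b_{ii}d_i)$, to show $\mathcal{M}((AP)^{\circ\alpha})E$ is again strictly diagonally dominant with positive diagonal; this is a correct, self-contained reproof of exactly the qualitative fact the paper imports from Elsner (the price of Elsner's theorem is the citation, the gain is brevity and a quantitative eigenvalue bound; your argument buys independence from that reference at the cost of a slightly longer computation). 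Two small points you should make explicit in a final write-up: the positivity of the diagonal of $D$ is not literally stated in Proposition \ref{prop:mmatrix}(iii), but you can either take $D=\operatorname{diag}(x)$ with $x$ from characterization (ii) (where $x>0$ is forced, since $x_i=0$ would make row $i$ of $\mathcal{M}(AP)D$ fail dominance) or simply replace $D$ by its entrywise absolute value, which changes none of the relevant absolute values and excludes zero entries; and the degenerate case $\sum_{k\neq i}c_{ik}=0$ is trivially fine, as you note.
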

\begin{proof}
By Theorem \ref{thm:cam_hof}, $\rankp(A)=n$ if and only if there exists a permutation matrix $P$ such that $\mathcal{M}(AP)$ is a nonsingular M-matrix, which is equivalent to saying that the minimum real eigenvalue of $\mathcal{M}(AP)$ is positive, according to Proposition \ref{prop:mmatrix}, i.e., $\sigma(AP)>0$.

But then, Theorem $4$ from \cite{elsner1988perron} guarantees precisely that we must have $$\sigma(A^{\circ \alpha}P) = \sigma((AP)^{\circ  \alpha}) \geq \sigma(AP)^{\alpha} > 0,$$
proving that $\rankp(A^{\circ \alpha})=n$.
\end{proof}

By specializing $\alpha=2$ and applying the previous Lemma to the Hadamard square root of $A$ we get the following immediate Corollary.

\begin{corollary}
Let $A \in \mathbb{R}^{n\times n}_+$. If $\rankp(A)<n$, $\rankp(\sqrt[\circ]{A})<n.$
\end{corollary}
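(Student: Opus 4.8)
The plan is to prove the contrapositive: assuming $\rankp(\sqrt[\circ]{A})=n$, deduce $\rankp(A)=n$. The entire content is a direct application of the preceding Lemma (the one asserting that $\rankp(A)=n$ implies $\rankp(A^{\circ\alpha})=n$ for every $\alpha\geq 1$), with the Hadamard square root playing the role of the input matrix.

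Concretely, I would set $B:=\sqrt[\circ]{A}$. Since $A\in\mathbb{R}^{n\times n}_+$, each entry of $A$ has a well-defined nonnegative real square root, so $B\in\mathbb{R}^{n\times n}_+$ is legitimate, and moreover $B^{\circ 2}=A$ by construction. Now apply the previous Lemma to the matrix $B$ with the exponent $\alpha=2\geq 1$: under the hypothesis $\rankp(B)=n$, the Lemma gives $\rankp(B^{\circ 2})=n$, that is, $\rankp(A)=n$. This establishes the contrapositive, hence the Corollary: if $\rankp(A)<n$, then $\rankp(\sqrt[\circ]{A})<n$.

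There is essentially no obstacle here — the work has all been done in the preceding Lemma (which in turn rests on Camion--Hoffman's Theorem, the $M$-matrix characterizations of Proposition \ref{prop:mmatrix}, and the Perron-type inequality $\sigma((AP)^{\circ\alpha})\geq\sigma(AP)^{\alpha}$ for the smallest real eigenvalue). The only points worth a sentence of care are that taking entrywise square roots keeps us inside $\mathbb{R}^{n\times n}_+$ and that squaring entrywise undoes it, both of which are immediate. I would therefore present the proof in a single short paragraph, simply invoking the Lemma with $\alpha=2$ applied to $\sqrt[\circ]{A}$.
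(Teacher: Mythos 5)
Your argument is correct and is exactly the paper's: the corollary is obtained by applying the preceding Lemma with $\alpha=2$ to $\sqrt[\circ]{A}$, which is precisely the contrapositive step you carry out. Nothing further is needed.
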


This can be used to get a simpler upper bound on the complex semidefinite rank, testing submatrices of $A$ instead of its square root.

\begin{corollary}\label{cor:sqrank_psd}
Let $A \in \mathbb{R}^{n\times m}_+$, with $n \leq m$. If all $k\times k$ submatrices of $A$ have nonmaximal phaseless rank, $$\rankpsd(A)\leq n-\left\lfloor \frac{n-1}{k-1}\right\rfloor.$$
\end{corollary}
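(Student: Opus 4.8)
The plan is to transfer the hypothesis from $A$ to its Hadamard square root $\sqrt[\circ]{A}$ and then invoke Corollary \ref{cor:sqrank} (equivalently, Proposition \ref{prop:prank_bd} applied to $\sqrt[\circ]{A}$ together with the inequality $\rankpsd(A) \leq \rankp(\sqrt[\circ]{A})$ of Proposition \ref{prop:ineqs}). The only elementary observation needed is that passing to a submatrix commutes with the entrywise square root: if $I \subseteq \{1,\dots,n\}$ and $J \subseteq \{1,\dots,m\}$ index a $k\times k$ submatrix $A_{IJ}$, then the corresponding $k\times k$ submatrix of $\sqrt[\circ]{A}$ is precisely $\sqrt[\circ]{(A_{IJ})}$, since both operations act entrywise.

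First I would fix an arbitrary $k\times k$ submatrix $A_{IJ}$ of $A$; by hypothesis $\rankp(A_{IJ}) < k$. Applying the preceding corollary (that for a square matrix $B$, $\rankp(B) < k$ implies $\rankp(\sqrt[\circ]{B}) < k$, itself a consequence of the Hadamard-power lemma with $\alpha = 2$) to the $k\times k$ matrix $A_{IJ}$ gives $\rankp(\sqrt[\circ]{A_{IJ}}) < k$. By the commutation remark, this says that the corresponding $k\times k$ submatrix of $\sqrt[\circ]{A}$ has nonmaximal phaseless rank. Since $I$ and $J$ were arbitrary, \emph{every} $k\times k$ submatrix of $\sqrt[\circ]{A}$ has nonmaximal phaseless rank.

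Now Corollary \ref{cor:sqrank} applies to $A$ verbatim and yields $\rankpsd(A) \leq n - \left\lfloor \frac{n-1}{k-1} \right\rfloor$, as desired. (Alternatively, unwinding that corollary: Proposition \ref{prop:prank_bd} applied to $\sqrt[\circ]{A}$ bounds $\rankp(\sqrt[\circ]{A})$ by the same quantity, and Proposition \ref{prop:ineqs} then gives $\rankpsd(A) \leq \rankp(\sqrt[\circ]{A}) \leq n - \left\lfloor \frac{n-1}{k-1} \right\rfloor$.)

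There is no genuinely hard step here; all the content lives in the Hadamard-power lemma and in Proposition \ref{prop:rect} (used inside Proposition \ref{prop:prank_bd}), both already established. The one point to be careful about is to route everything through the phaseless rank of $\sqrt[\circ]{A}$ rather than attempting to relate $\rankpsd$ of submatrices of $A$ to $\rankpsd$ of $A$ directly: the latter would require a global patching argument of exactly the kind that Proposition \ref{prop:rect} supplies for $\rankp$ but which we do not have at the level of semidefinite factorizations.
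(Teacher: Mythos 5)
Your proposal is correct and follows exactly the route the paper intends: apply the Hadamard-power corollary (the $\alpha=2$ case) to each $k\times k$ submatrix of $A$, note that taking submatrices commutes with the entrywise square root, and then invoke Corollary \ref{cor:sqrank} (i.e., Proposition \ref{prop:prank_bd} applied to $\sqrt[\circ]{A}$ combined with $\rankpsd(A)\leq\rankp(\sqrt[\circ]{A})$ from Proposition \ref{prop:ineqs}). Your closing remark about routing the argument through $\rankp(\sqrt[\circ]{A})$ rather than trying to patch semidefinite factorizations of submatrices directly is exactly the right point of caution.
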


This can be used to derive simple upper bounds on the extension complexity of polytopes. Recall that for a $d$-dimensional polytope, $P$,
its slack matrix, $S_P$, has rank $d+1$ and its complex semidefinite rank is the complex semidefinite extension complexity of $P$. Since every $(d+2)\times (d+2)$ submatrix of $S_P$ has rank $d+1$, it also has phaseless rank at most $d+1$. Thus, by applying the previous corollary we obtain the following result.
\begin{corollary}\label{cor:sqrank_pol}
Let $P$ be a $d$-dimensional polytope with $v$ vertices and $f$ facets, and $m=\min\{v,f\}$ then
$$\rankpsd(S_P) \leq m-\left\lfloor \frac{m-1}{d+1}\right\rfloor.$$
\end{corollary}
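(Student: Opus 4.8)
The plan is to obtain Corollary \ref{cor:sqrank_pol} as an immediate consequence of Corollary \ref{cor:sqrank_psd}, applied to the slack matrix $S_P$ with the block size taken to be $k=d+2$. All the substance is already contained in that corollary; the only work left is to check that its hypotheses are satisfied and to dispose of one degenerate case where $d+2$ is too large for the corollary to apply literally.

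First I would record the two facts about $S_P$ that make this go through. The slack matrix of a $d$-polytope is a nonnegative $v\times f$ matrix with $\rank(S_P)=d+1$, and $\rankpsd$ is invariant under transposition, since the defining factorization $A_{ij}=\langle U_i,V_j\rangle$ with $U_i,V_j\in\S^k_+(\CC)$ is symmetric in the roles of rows and columns. Relabelling if necessary, I may therefore assume the matrix under consideration is of size $m\times m'$ with $m=\min\{v,f\}\le m'=\max\{v,f\}$, so that the requirement $n\le m$ in Corollary \ref{cor:sqrank_psd} is met with $n=m$. Next I would verify the key hypothesis: every $(d+2)\times(d+2)$ submatrix $N$ of $S_P$ has nonmaximal phaseless rank. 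Indeed $\rank(N)\le\rank(S_P)=d+1<d+2$, so $\rankp(N)\le\rank(N)\le d+1<d+2$, which is strictly below the dimension of $N$. Assuming $d+2\le m$, Corollary \ref{cor:sqrank_psd} with $k=d+2$ then yields
$$\rankpsd(S_P)\le m-\left\lfloor\frac{m-1}{(d+2)-1}\right\rfloor=m-\left\lfloor\frac{m-1}{d+1}\right\rfloor,$$
which is exactly the asserted bound.

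It remains to treat the leftover case. For a $d$-polytope one always has $v,f\ge d+1$, hence $m\ge d+1$, so the only value not covered above is $m=d+1$. There the floor term vanishes and the claimed inequality reads $\rankpsd(S_P)\le d+1=m$, which holds for the trivial reason that any nonnegative $p\times q$ matrix $A$ with $p\le q$ satisfies $\rankpsd(A)\le p$: taking $U_i=e_ie_i^{T}$ and letting $V_j$ be the diagonal matrix with diagonal entries $A_{1j},\dots,A_{pj}$ gives a semidefinite factorization of size $p$. This completes the argument.

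I do not expect any genuine obstacle, since all the real content lives in Corollary \ref{cor:sqrank_psd}. The only points that need a moment of care are the bookkeeping of which of $v$ and $f$ plays the role of $n$ — handled by transposition-invariance of $\rankpsd$ — and the boundary case $m=d+1$, where the submatrix size $d+2$ exceeds the dimension of the matrix and Corollary \ref{cor:sqrank_psd} does not apply on the nose.
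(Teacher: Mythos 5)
Your proposal is correct and follows essentially the same route as the paper: apply Corollary \ref{cor:sqrank_psd} with $k=d+2$, using that $\rank(S_P)=d+1$ forces every $(d+2)\times(d+2)$ submatrix to have nonmaximal phaseless rank. Your extra care about transposition-invariance of $\rankpsd$ and the degenerate case $m=d+1$ is sound bookkeeping that the paper leaves implicit.
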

For $d=2$, for example, this gives us an upper bound of $\left \lceil \frac{2n+1}{3} \right \rceil$ for the complex extension complexity of an $n$-gon, which is similar asymptotically to the $4 \left \lceil \frac{n}{6} \right \rceil$ bound derived in \cite{gouveia2015worst} and slightly
better for small $n$ (note that that bound is valid for the real semidefinite extension complexity, and so automatically for the complex case too). Of course it is just linear, so it does not reach the sublinear complexity proved by Shitov in \cite{shitov2014sublinear} even for the linear extension complexity, but it is applicable in general and can be useful for small polytopes in small dimensions. Moreover, it is, as far as we know, the only non-trivial bound that works for polytopes of arbitrary dimension. As a last remark, we note that such lift can explicitly can be constructed. This can easily be done from an actual rank 
$m-\left\lfloor \frac{m-1}{d+1}\right\rfloor$ matrix that is equimodular to the Hadamard square root of the slack matrix, and such matrix can, with a small amount of work, be explicitly constructed from our results.
\subsection{Equiangular lines}

A set of $n$ lines in the vector space $\mathbb{R}^d$ or $\mathbb{C}^d$ is called equiangular if all the lines intersect at a single point and every pair of lines makes the same angle. Bounding the maximum number of real equiangular lines for a given dimension has long been a popular research problem. Classically, we want bounds on the absolute maximum number of such lines (denoted by $N(d)$) or on the maximum number for a given common angle $\arccos(\alpha)$ (denoted by $N_{\alpha}(d)$). A somewhat thorough survey on this type of results can be found in  \cite{de2018k}, while further reading on the real case can be seen in \cite{greaves2016equiangular}, \cite{jiang2019equiangular}, and \cite{lemmens1991equiangular}. 

The complex case has seen a flurry of recent developments due to its connection to quantum physics (see for instance \cite{MR2142983},\cite{MR2301093},\cite{MR2059685},\cite{MR2662471}). In fact, it is well known that the maximum number of complex equiangular lines in $\mathbb{C}^d$, denoted by $N^{\mathbb{C}}(d)$, is bounded from above by $d^2$ and it is conjectured that $N^{\mathbb{C}}(d)=d^2$ for all $d\geq 2$ (\cite{zauner1999grundzuge}). When such a maximum set of $d^2$ lines exists, one can construct a symmetric, informationally complete, positive operator-valued measure (SIC-POVM), an object that plays an important role in quantum information theory. Recent developments in the construction of large sets of complex equiangular lines can be found in \cite{jedwab2015large} and  \cite{jedwab2015simple}.

To see how these notions relate to the object of our study, consider a set of $n$ lines through a point in a $d$-dimensional Euclidean space, which we consider either $\mathbb{R}^d$ or $\mathbb{C}^d$. Let $v_i$, $i=1,...,n$, be unit vectors for each of the lines, and let $V$ be the matrix whose columns correspond to these vectors. Note that the lines having pairwise angle $\arccos(\alpha)$ is the same as having $|v_i^*v_j|= \alpha$ for all $i \not = j$. More precisely, $|V^* V| = A^{\alpha}_n$, where $A^{\alpha}_n$ denotes the $n\times n$ matrix with ones on the diagonal and $\alpha$'s everywhere else, which means $A^{\alpha}_n$ is equimodular to a positive semidefinite matrix of rank at most $d$. Conversely, if $A^{\alpha}_n$ is equimodular to a positive semidefinite matrix of rank at most $d$, one can do an eigendecomposition to attain a set of $n$ equiangular lines in the $d$-dimensional Euclidean with common angle $\arccos(\alpha)$. This immediately suggests a semidefinite variant of the phaseless rank.

\begin{definition}
For a symmetric matrix $A \in \mathbb{R}^{n\times n}_+$, its psd-phaseless rank is defined as
$$\rankp^{\text{psd}}(A) = \min\{\rank(B): B\in \Omega(A) \text{ and }B \succeq 0\}.$$
\end{definition}

We can then use this notion to highlight that the problem of finding equiangular lines with fixed angle is equivalent to that of finding a matrix rank.

\begin{proposition}
For $0\leq \alpha\leq 1$, $\rankp^{\text{psd}}(A^{\alpha}_n)$  is the smallest dimension $d$ for which there exists an equiangular set of $n$ lines in $\mathbb{C}^d$  with common angle $\arccos{\alpha}$.
\end{proposition}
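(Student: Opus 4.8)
The plan is to prove the equality by exhibiting an explicit, rank-preserving correspondence between positive semidefinite matrices in $\Omega(A^{\alpha}_{n})$ and equiangular configurations of $n$ lines, thereby making the discussion preceding the statement precise. Write $r=\rankp^{\text{psd}}(A^{\alpha}_{n})$ and let $d^{\ast}$ be the smallest dimension for which $\mathbb{C}^{d^{\ast}}$ contains $n$ equiangular lines with common angle $\arccos\alpha$; I will show $r\le d^{\ast}$ and $d^{\ast}\le r$.

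For $r\le d^{\ast}$, I would start from unit vectors $v_{1},\dots,v_{n}\in\mathbb{C}^{d^{\ast}}$ spanning such a configuration, so that $v_{i}^{\ast}v_{i}=1$ and $|v_{i}^{\ast}v_{j}|=\alpha$ for $i\ne j$. Letting $V$ be the $d^{\ast}\times n$ matrix with columns $v_{1},\dots,v_{n}$ and $B=V^{\ast}V$, one has $B\succeq 0$, $|B_{ij}|=|v_{i}^{\ast}v_{j}|=(A^{\alpha}_{n})_{ij}$, so $B\in\Omega(A^{\alpha}_{n})$, and $\rank(B)=\rank(V)\le d^{\ast}$; hence $r\le d^{\ast}$. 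For $d^{\ast}\le r$, I would take $B\in\Omega(A^{\alpha}_{n})$ with $B\succeq 0$ and $\rank(B)=r$, and use its spectral decomposition to write $B=W^{\ast}W$ with $W$ an $r\times n$ matrix (whose rows are $\sqrt{\mu_{k}}\,u_{k}^{\ast}$, where the $\mu_{k}>0$ and $u_{k}$ are the positive eigenvalues and corresponding orthonormal eigenvectors of $B$). Let $v_{1},\dots,v_{n}\in\mathbb{C}^{r}$ be the columns of $W$. Then $|v_{i}|^{2}=B_{ii}=1$, so each $v_{i}$ is a genuine unit vector, and $|v_{i}^{\ast}v_{j}|=|B_{ij}|=\alpha$ for $i\ne j$, so the $n$ lines $\mathbb{C}v_{i}\subseteq\mathbb{C}^{r}$ form an equiangular set with common angle $\arccos\alpha$ (they all pass through the origin, hence meet). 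Thus $d^{\ast}\le r$, and combining the two inequalities gives $\rankp^{\text{psd}}(A^{\alpha}_{n})=d^{\ast}$.

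There is no real obstacle here: the only points worth a word are that the factorization $B=W^{\ast}W$ can be taken with exactly $r=\rank(B)$ rows, so that the ambient dimension of the lines equals the rank, and that the ones on the diagonal of $A^{\alpha}_{n}$ are precisely what force the columns of $W$ to be unit vectors rather than merely nonzero. For $\alpha<1$ the resulting lines are moreover automatically distinct, since $|v_{i}^{\ast}v_{j}|<1$ for unit vectors forbids proportionality; the degenerate case $\alpha=1$, where $A^{1}_{n}=J_{n}$ has psd-phaseless rank $1$, corresponds to all $n$ lines coinciding. In short, the content of the statement is that the definition of $\rankp^{\text{psd}}$ was tailored precisely to encode this minimal dimension.
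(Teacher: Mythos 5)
Your proof is correct and follows essentially the same route the paper takes in the discussion preceding the proposition: one direction via the Gram matrix $V^{*}V$, which is positive semidefinite, equimodular with $A^{\alpha}_{n}$ and of rank at most $d$, and the converse via an eigendecomposition (your $B=W^{*}W$ with exactly $\rank(B)$ rows) producing unit vectors with pairwise inner products of modulus $\alpha$. Your extra remarks on the diagonal forcing unit vectors and on the degenerate case $\alpha=1$ are fine points the paper leaves implicit, but they do not change the argument.
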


Note that, in particular, $\rankp^{\text{psd}}(A) \geq \rankp(A)$, so lower bounds on the usual phaseless rank give us upper bounds on the numbers of equiangular lines. In the real case, we can introduce the analogous notion of psd-signless rank and, in that case, the trivial signless rank inequality from Lemma \ref{lem:inequality} recovers the traditional Gerzon upper bound for the number of equiangular lines. In the complex case, the inequality $N^{\mathbb{C}}(d)\leq d^2$ can be rewritten as $\rankp^{\text{psd}}(A^{\alpha}_n)\geq \sqrt{n}$ for all $\alpha$ which, once again, follows directly from  $\rankp(A^{\alpha}_n)$ being a lower bound for $\rankp^{\text{psd}}(A^{\alpha}_n)$ and Lemma \ref{lem:inequality}. To illustrate this strategy of turning lower bounds on phaseless rank into upper bounds on the number of equiangular lines,  we present a simple result derived from our basic bounds on phaseless rank.

\begin{proposition}
For $\alpha<\frac{1}{d}$, $N_{\alpha}^{\mathbb{C}}(d)=d$.
\end{proposition}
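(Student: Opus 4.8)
The plan is to recast the statement entirely in terms of the psd-phaseless rank of the matrices $A^\alpha_n$ and then apply Camion--Hoffman's Theorem. By the proposition immediately above, $\rankp^{\text{psd}}(A^\alpha_n)$ is the least dimension hosting $n$ equiangular lines with common angle $\arccos\alpha$, so $N^{\CC}_\alpha(d) = \max\{n : \rankp^{\text{psd}}(A^\alpha_n) \le d\}$. Hence proving $N^{\CC}_\alpha(d) = d$ reduces to two claims: that $\rankp^{\text{psd}}(A^\alpha_d) \le d$ (which gives the lower bound $N^{\CC}_\alpha(d) \ge d$), and that $\rankp^{\text{psd}}(A^\alpha_{d+1}) = d+1 > d$ (which gives the upper bound $N^{\CC}_\alpha(d) < d+1$). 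Throughout I read ``common angle $\arccos\alpha$'' as $0 \le \alpha < 1/d$, so that $A^\alpha_n$ is a genuine nonnegative matrix.

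The lower bound is immediate. Writing $A^\alpha_d = (1-\alpha)I_d + \alpha J_d$, its eigenvalues are $1 + (d-1)\alpha$ and $1 - \alpha$, both strictly positive since $0 \le \alpha < 1/d < 1$; thus $A^\alpha_d$ is itself a positive definite matrix of rank $d$ equimodular with $A^\alpha_d$, so $\rankp^{\text{psd}}(A^\alpha_d) \le d$, and the preceding proposition delivers $d$ equiangular lines with the prescribed angle in $\CC^d$.

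For the upper bound I would first use the monotonicity $\rankp^{\text{psd}}(A^\alpha_{d+1}) \ge \rankp(A^\alpha_{d+1})$ noted earlier, so that it suffices to prove $\rankp(A^\alpha_{d+1}) = d+1$, i.e. that $A^\alpha_{d+1}$ has maximal phaseless rank. By Theorem \ref{thm:cam_hof} I only need one permutation matrix $P$ making $\mathcal{M}(A^\alpha_{d+1}P)$ a nonsingular $M$-matrix, and I would take $P = I$. Then $\mathcal{M}(A^\alpha_{d+1}) = (1+\alpha)I_{d+1} - \alpha J_{d+1}$ has nonpositive off-diagonal entries, and its (real) eigenvalues are $1 + \alpha$, with multiplicity $d$, and $1 - d\alpha$, once; the latter is positive precisely because $\alpha < 1/d$, and the former is always positive, so by Proposition \ref{prop:mmatrix} (characterization (vi), say) $\mathcal{M}(A^\alpha_{d+1})$ is a nonsingular $M$-matrix. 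Camion--Hoffman then gives $\rankp(A^\alpha_{d+1}) = d+1$, hence $\rankp^{\text{psd}}(A^\alpha_{d+1}) = d+1$, and no $d+1$ lines equiangular with angle $\arccos\alpha$ can fit in $\CC^d$. Combining the two bounds yields $N^{\CC}_\alpha(d) = d$.

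I do not anticipate a genuine obstacle: the content of the argument is the one-line spectral computation identifying $\alpha < 1/d$ as exactly the threshold at which $\mathcal{M}(A^\alpha_{d+1})$ stops being a nonsingular $M$-matrix, together with the trivial inequality $\rankp^{\text{psd}} \ge \rankp$. The only points to keep straight are that the lower-bound construction requires $\alpha < 1$ (automatic from $\alpha < 1/d$) and that $A^\alpha_n$ must be nonnegative for Camion--Hoffman to apply, which is why one restricts to $\alpha \ge 0$.
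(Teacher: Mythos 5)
Your proof is correct and follows essentially the same route as the paper: rewrite $N_{\alpha}^{\mathbb{C}}(d)$ via $\rankp^{\text{psd}}$, get the lower bound from $A^{\alpha}_d$ being positive definite of rank $d$, and get the upper bound from $\rankp^{\text{psd}}(A^{\alpha}_{d+1}) \geq \rankp(A^{\alpha}_{d+1}) = d+1$. The only cosmetic difference is your certificate of maximal phaseless rank (Camion--Hoffman with $P=I$ and the eigenvalues $1+\alpha$, $1-d\alpha$ of $\mathcal{M}(A^{\alpha}_{d+1})$), where the paper simply observes that the columns of $A^{\alpha}_{d+1}$ are lopsided, i.e.\ diagonally dominant; your version is, if anything, slightly more explicit.
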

\begin{proof}
Fix $d$ and let $\alpha<\frac{1}{d}$. Observe that one can write $N_{\alpha}^{\mathbb{C}}(d)$ as
$$\max\{n:\rankp^{\text{psd}}(A^{\alpha}_n)\leq d\}$$
Since $A^{\alpha}_{d+1}$ has lopsided columns, and is a submatrix of any $A^{\alpha}_n$ for $n>d$, we have  $$\rankp^{\text{psd}}(A^{\alpha}_{n}) \geq \rankp(A^{\alpha}_n) \geq d+1$$
for any $n > d$. Since $A^{\alpha}_d$ is positive semidefinite and has rank $d$, the result follows.
\end{proof}

While fairly simple, this result highlights the usefulness of deriving effective lower bounds to the phaseless rank, as a means to obtain upper bounds to $N_{\alpha}^{\mathbb{C}}(d)$. A related classical concept that can be studied in terms of psd-phaseless rank is that of mutually unbiased bases in $\mathbb{C}^d$ (MUB's). Two orthonormal bases $\{u_1,...,u_d\}$ and $\{v_1,...,v_d\}$ of $\mathbb{C}^d$ are said to be unbiased if $|u_i^*v_j|=\frac{1}{\sqrt{d}}$ for all $i$ and $j$. A set of orthonormal bases is a set of mutually unbiased bases if all pairs of distinct bases are unbiased. It is known that there cannot exist sets of more than $d+1$ MUB's in $\mathbb{C}^d$, and such sets exist for $d$ a prime power, but the precise maximum number is unknown even for $d=6$, where it is believed to be three (see \cite{durt2010mutually}, \cite{bandyopadhyay2002new} and  \cite{brierley2009constructing} for more information and a survey into this rich research area). To translate this in terms of phaseless rank, consider the matrix $B_d^k$ defined as the matrix of $k \times k$ blocks where the blocks in the diagonal are $d \times d$ identities and the off-diagonal ones are constantly equal to $\frac{1}{\sqrt{d}}$. The following simple fact is then clear.

\begin{proposition}
There exists a set of $k$  mutually unbiased bases in $\mathbb{C}^d$ if and only if $\rankp^{\text{psd}}(B_d^k) = d$.
\end{proposition}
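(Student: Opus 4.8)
The plan is to observe, exactly as in the equiangular lines discussion above, that the statement is a direct translation between a system of $k$ mutually unbiased bases in $\mathbb{C}^d$ and a Gram-matrix factorisation of a positive semidefinite matrix equimodular with $B_d^k$. Both implications come from reading the same identity in opposite directions, together with the fact that the block structure of $B_d^k$ pins down the number of rows of such a factorisation.

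For the ``only if'' direction, I would take $k$ mutually unbiased bases $\{u^{(l)}_1,\dots,u^{(l)}_d\}$, $l=1,\dots,k$, list all $kd$ vectors as the columns of a $d\times kd$ matrix $V$, grouped block by block, and set $M=V^*V$. Then $M\succeq 0$ with $\rank M\le d$; the diagonal $d\times d$ blocks of $M$ equal $I_d$ because each basis is orthonormal, and the off-diagonal blocks have every entry of modulus $1/\sqrt d$ because distinct bases are unbiased. Hence $|M|=B_d^k$, so $M\in\Omega(B_d^k)$ and $\rankp^{\text{psd}}(B_d^k)\le d$. The reverse inequality holds unconditionally: $B_d^k$ has $I_d$ as a principal submatrix, so any positive semidefinite matrix equimodular with $B_d^k$ restricts to a positive semidefinite matrix equimodular with $I_d$; since a positive semidefinite matrix has nonnegative diagonal, that restriction must be $I_d$ itself, which has rank $d$, forcing $\rankp^{\text{psd}}(B_d^k)\ge d$.

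For the ``if'' direction, I would start from a positive semidefinite $M$ with $|M|=B_d^k$ and $\rank M=r\le d$, factor $M=G^*G$ with $G$ of size $r\times kd$, and split the columns of $G$ into $k$ groups $g^{(l)}_1,\dots,g^{(l)}_d$. The diagonal-block relations $|\langle g^{(l)}_i,g^{(l)}_j\rangle|=\delta_{ij}$ say that the $d$ vectors in each group are orthonormal in $\mathbb{C}^r$; having $d$ of them forces $r=d$ and makes each group an orthonormal basis of $\mathbb{C}^d$, while the off-diagonal relations $|\langle g^{(l)}_i,g^{(l')}_j\rangle|=1/\sqrt d$ for $l\ne l'$ make distinct bases unbiased, producing the desired $k$ mutually unbiased bases.

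I do not expect a real obstacle here, since the argument is essentially bookkeeping; the only two points needing a moment's care are recording that the positive semidefiniteness hypothesis upgrades ``equimodular with $I_d$'' to ``equal to $I_d$'' (which is what pins down the lower bound $\rankp^{\text{psd}}(B_d^k)\ge d$), and noting that $d$ orthonormal vectors cannot live in fewer than $d$ dimensions (which is what forces $r=d$ in the backward direction).
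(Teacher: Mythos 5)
Your proof is correct and is exactly the Gram-matrix translation the paper has in mind: the paper states this proposition without proof, treating it as clear from the same correspondence used for equiangular lines, and your bookkeeping (including the key point that positive semidefiniteness upgrades the diagonal blocks from equimodular with $I_d$ to equal to $I_d$) fills in precisely that argument.
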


As in equiangular lines, lower bounds on the phaseless rank have the potential to give upper bounds on the maximum number of MUB's.

\subsection{Conclusion and some open questions}

Throughout this paper we established the connection between the classical results of Camion and Hoffman on equimodular classes of matrices with the modern developments in the theories of amoebas and semidefinite extension complexity.
This provided a rich field of motivation and applications, and allowed for interesting and new developments. However, many questions remain completely open and are ripe for further explorations.

\begin{enumerate}
\item Is it possible to characterize other cases besides the nonmaximal phaseless rank? The simplest outstanding case would be to characterize $4\times 4$ matrices of phaseless rank at most $2$.

\item Since the phaseless rank has strong conceptual connections to both the rank minimization and the phase retrieval problems can one use the body of work on approximations to those problems to develop some approximations to these quantities?

\item What can we say about the complexity of computing the phaseless rank?

\item While some work was already carried out here on the dimension of these semialgebraic sets, it should be possible to state more precise results on which values of the phaseless rank are typical.
\end{enumerate}

\section*{Acknowledgments}
The authors would like to thank Ant\'{o}nio Leal Duarte for pointing us towards the literature on Camion-Hoffman's Theorem, and Timo de Wolff for the encouragement and constructive feedback on the amoeba applications.

\bibliographystyle{plain}
\bibliography{bibliografia}

\end{document}